\documentclass[final,onefignum,onetabnum]{siamart171218}



\usepackage{lipsum}
\usepackage{amsfonts}
\usepackage{graphicx}
\usepackage{epstopdf}
\usepackage{subfig}
\usepackage{enumerate}
\usepackage{algorithmic}
\ifpdf
  \DeclareGraphicsExtensions{.eps,.pdf,.png,.jpg}
\else
  \DeclareGraphicsExtensions{.eps}
\fi


\newsiamremark{remark}{Remark}
\newsiamremark{hypothesis}{Hypothesis}
\crefname{hypothesis}{Hypothesis}{Hypotheses}
\newsiamthm{claim}{Claim}

\headers{Totally Positive Hessenberg Toeplitz matrices}{N. Ercolani, J. Peca-Medlin, J. Ramalheira-Tsu}

\title{Total positivity and spectral theory for Toeplitz Hessenberg matrix ensembles}

\author{Nicholas Ercolani\thanks{Department of Mathematics, University of Arizona, Tucson, AZ 85721 USA 
  (\email{ercolani@math.arizona.edu}).}
\and John Peca-Medlin\thanks{Department of Mathematics, University of Arizona, Tucson, AZ 85721 USA 
  (\email{johnpeca@math.arizona.edu}.}
\and Jonathan Ramalheira-Tsu\thanks{Department of Mathematics, University of Arizona, Tucson, AZ 85721 USA 
  (\email{jramalheiratsu@arizona.edu}.}
  }

\usepackage{amsopn}
\DeclareMathOperator{\diag}{diag}

\makeatletter
\newcommand*{\addFileDependency}[1]{
  \typeout{(#1)}
  \@addtofilelist{#1}
  \IfFileExists{#1}{}{\typeout{No file #1.}}
}
\makeatother


\ifpdf
\hypersetup{
  pdftitle={Totally Positive Hessenberg Matrices},
  pdfauthor={N. Ercolani, J. Peca-Medlin, and J. Ramalheira-Tsu}
}
\fi




\begin{document}

\maketitle

\begin{abstract}
  In this paper we define and lay the groundwork for studying a novel matrix ensemble: totally positive Hessenberg Toeplitz operators, denoted TPHT. This is the intersection of two ensembles that have been significantly explored: totally positive Hessenberg matrices (TPH) and Hessenberg Toeplitz matrices (HT). TPHT has a rich linear algebraic and spectral structure that we describe. Along the way we find some previously unnoticed connections between certain Toeplitz normal forms for matrices and Lie theoretic interpretations. We also numerically study  the spectral asymptotics of TPH matrices via the TPHT  ensemble and use this to open a study of TPHT with random symbols.
\end{abstract}

\begin{keywords}
  Totally Positive, Hessenberg, Toeplitz, spectral theory, random matrix theory
\end{keywords}

\begin{AMS}
  15B05, 15B48, 15B52, 47B35, 60B15, 60B20
\end{AMS}

\section{Introduction}

 This paper concerns a particular subclass of Hessenberg matrices  and their spectral asymptotics. The $n \times n$ (lower) Hessenberg matrices, $\mathcal{H}$, in general take the form  
\begin{eqnarray*}
\left(\begin{array}{ccccc}
* & 1 & & &\\
*& * & 1 & &\\
\vdots & \ddots & \ddots & \ddots &\\
\vdots &  & \ddots & \ddots & 1\\
* & \dots & \dots & * & *
\end{array} \right).
\end{eqnarray*}

The first subclass we consider is the totally positive Hessenberg matrices (TPH). We follow a standard convention here by taking totally positive  (TP) to mean that all minors are non-negative. (If we mean to say that all minors are positive we will refer to this as being {\it strictly} TP.) This enables us to characterize other sparsity patterns as being TP, such as lower triangular TP matrices. 
 
 Hessenberg matrices themselves have played an elemental role in many areas of linear and numerical linear algebra. For instance, every matrix is conjugate via a Householder reflector to a Hessenberg matrix, called its Hessenberg form. The Hessenberg form is preserved by the QR decomposition, and so is an essential component of many tools used to compute eigenvalues and eigenvectors, including the QR algorithm, Lanczos Iterations, and generalized minimal residual method (GMRES) \cite{Fr61,Fr62,Ku61,La50,SaSc86}. Additionally, Hessenberg matrices are essential  in Neville elimination, an alternative to Gaussian elimination to find an LU decomposition that  iteratively zeros out each subdiagonal and so maintains an upper Hessenberg form that moves toward the final upper triangular factor \cite{Gasca1995}.
 
 More recently there has been a particular focus on the TPH ensembles we consider here. These bring to bear tools from other areas of mathematics such as network theory and dynamical systems theory. TPH ensembles have played a fundamental role in studying the analytical combinatorics of networks because they are generalizations of path-counting matrices \cite{FZ00}.  Consequently, this class of matrices has natural coordinatizations stemming from Whitney-Loewner factorization (see Theorem \ref{thm:wp}). That in turn has applications to the dynamics of 
LU factorizations as well as to integrable Toda lattices and their generalizations. In particular this makes a connection to recent work on integrable systems theory and analytical combinatorics. In \cite{ER23}, we find simultaneous TPH realizations of the integrable Full Toda lattice \cite{EFS93} 
on different space-time scales. For discrete space-time we realize a novel combinatorial interpretation of the LU algorithm in terms of the dynamics of extended box-ball  systems.  In related work \cite{Fukuda13} 
Fukuda and others have made use of Full Toda lattices and formal connections to orthogonal polynomials to try to develop improved eigenvalue algorithms for totally positive Hessenberg (TPH) matrices. This appears to have potential connections to the rigorous analysis of bi-orthogonal polynomials developed in \cite{EM01}. 
Other applications, by Demmel and Koev, concern high relative accuracy for eigenvalue calculations (see \cite{DeKo05,Ko07}).

In another direction there has been a recent focus on a different class of matrices that is both Hessenberg and Toeplitz (HT) with many applications in linear algebra related for example to orthogonal polynomials, stochastic filters, time series analysis and difference approximations to initial-BVP problems for PDE \cite{GKS72}. 

In this paper we begin to analyze questions that lie naturally at the intersection of these two classes: totally positive Hessenberg Toeplitz matrices (TPHT). The key point for our work here is that each isospectral class of Hessenberg matrices contains a
unique Toeplitz matrix. This allows us to bring forward and apply powerful tools from Toeplitz theory to investigate spectral questions for general TPH matrices. This ties into another more recent and principal motivation for this work which concerns the study of integrable systems evolving on spaces of random or rough data. That work seeks to gain insights into dynamics of more general conservative evolution equations in random environments (see \cite{Spohn21} 
for a general survey). Past models have focused on the classical Toda whose phase space is tridiagonal Hessenberg matrices with independent random entries. The recent work in \cite{ER23} 
suggests how these studies may be extended to general TPH ensembles with appropriate random entries. These motivations will be further described in Section \ref{numspec}, but our overall goal here is to take a step toward showing that such studies may be reduced to considering spectra of random TPHT class. Along the way we uncover some novel aspects of the linear algebra underlying this class.
\bigskip

The outline of this paper is as follows. In Section \ref{sec:background}  we present the essential background for the two fundamental classes, HT and TPH, on which this work is based. In particular  we review some of the relevant remarkable properties of TP matrices and illustrate their realization within the TPH ensemble. We then introduce the novel aspects of the intersection ensemble, TPHT, of TPH and HT. Along the way we describe results on HT normal forms for general Hessenberg matrices. We show that these normal forms are very naturally related to more general normal forms in Lie theory  originally due to Kostant. We believe this is the first time this connection has been noticed and we make use of it in later sections as well as relating it to other natural normal forms (detailed in the Appendix). Finally we  discuss 
connections of the HT normal form to LU factorization, providing also the explicit LU form for TPHT matrices in \Cref{thm:LU-T}.

In Section \ref{thm:GS} we review the Grenander-Szeg\H{o} theory that provides the principal tool for understanding spectral asymptotics within TPH.

Section \ref{numspec} motivates our numerical study of the spectral theory of TPH and how to access this through TPHT. The bulk of the section presents random realizations of the spectral asymptotics.

In Section \ref{conclusions}, we describe a number of potential applications for our work. Finally, in \Cref{App} we detail the aforementioned interplay between  various normal forms for the TPHT ensemble while \Cref{sec:proofthm2} contains the detailed proof for \Cref{thm:LU-T}.
\bigskip

\section{Background and Motivation}
\label{sec:background}
\subsection{Hessenberg-Toeplitz Normal Form}
\label{sec:HT}

Toeplitz matrices are distinguished by having constant values along diagonals.
More precisely, an $n \times n$ matrix $X$ is Toeplitz if there are $2n-1$ numbers, 
$x_{-n + 1}, \dots, x_0, \dots, x_{n-1}$
such that the $(i,j)$ coefficient of $X$ is $x_{j-i}$ for $1 \leq i,j \leq n$. 

Mackey, Mackey and Petrovic derived the following elegant, constructive result in \cite{MaMaPe99}.  First recall that a {\it nonderogatory} matrix is defined as one all of whose eigenspaces are one-dimensional, meaning that each eigenvalue corresponds to one and only one Jordan block. Also let $M_n(\mathbb{R})$ denote the space of
of $n \times n$ matrices over $\mathbb{R}$. 
\begin{theorem}[\cite{MaMaPe99}] \label{thm:mackey}
Every nonderogatory element of $M_n(\mathbb{R})$ is similar to a {\it unique} Hessenberg-Toeplitz (HT)
matrix. Alternatively, every nonderogatory isospectral class contains a unique HT matrix. 
\end{theorem}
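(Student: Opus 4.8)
The plan is to reduce the theorem to a single bijectivity statement about characteristic polynomials. Recall the standard facts that $A\in M_n(\mathbb{R})$ is nonderogatory exactly when its minimal polynomial equals its characteristic polynomial, equivalently when it admits a cyclic vector, equivalently when it has a single invariant factor; and that over $\mathbb{R}$ two nonderogatory matrices are similar if and only if they have the same characteristic polynomial, since then both are similar to the companion matrix of that polynomial (their common rational canonical form). First I would record that every HT matrix is itself nonderogatory: if $H$ is lower Hessenberg with $1$'s on the superdiagonal, then for every scalar $\lambda$ the $(n-1)\times(n-1)$ submatrix of $\lambda I-H$ obtained by deleting the last row and the first column is lower triangular with all diagonal entries equal to $-1$, hence nonsingular; so $\lambda I-H$ has rank at least $n-1$, every eigenspace of $H$ is at most one-dimensional, and $H$ is nonderogatory. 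Thus the HT matrices form a subset of the nonderogatory locus, and the whole theorem amounts to the claim that the characteristic-polynomial map $\Phi$, from the $n\times n$ HT matrices to the monic real polynomials of degree $n$, is a bijection.

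To prove $\Phi$ is a bijection, note that both its source and its target are naturally affine spaces of dimension $n$: an HT matrix is determined by the $n$ values $x_0,x_{-1},\dots,x_{-(n-1)}$ on and below its diagonal (the superdiagonal being $1$ and the entries strictly above the superdiagonal being $0$), and a monic degree-$n$ polynomial by its $n$ subleading coefficients. The key computational input is the classical cofactor recursion for Hessenberg determinants: writing $p_k(\lambda)=\det(\lambda I_k-H_k)$ for the characteristic polynomial of the leading $k\times k$ principal submatrix (which is again HT with the same data), one obtains
\begin{equation*}
p_k(\lambda) = \lambda\,p_{k-1}(\lambda)-\sum_{j=0}^{k-1}x_{-j}\,p_{k-1-j}(\lambda),\qquad p_0=1 .
\end{equation*}
From this recursion I would prove, by induction on $n$, the following triangular structural statement: for $0\le\ell\le n-1$, the coefficient of $\lambda^{\,n-1-\ell}$ in $p_n$ equals $-(n-\ell)\,x_{-\ell}$ plus a polynomial in the preceding variables $x_0,\dots,x_{-(\ell-1)}$ alone. (In the induction, the variable $x_{-\ell}$ enters the $\lambda^{\,n-1-\ell}$-coefficient through exactly two terms of the recursion: a contribution $-(n-1-\ell)x_{-\ell}$ coming from $\lambda\,p_{n-1}$, and a contribution $-x_{-\ell}$ coming from the $j=\ell$ term $-x_{-\ell}\,p_{n-1-\ell}$, whose leading coefficient is $1$; every other term contributes only lower-indexed variables.) Since the coefficient $-(n-\ell)$ is never zero in this range, one can solve for $x_0$ from the $\lambda^{n-1}$-coefficient, then $x_{-1}$ from the $\lambda^{n-2}$-coefficient, and so on down to $x_{-(n-1)}$ from the constant term, thereby recovering a unique HT matrix from any prescribed monic characteristic polynomial. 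That is precisely the bijectivity of $\Phi$.

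Given this lemma, the theorem follows immediately. For existence: if $A$ is nonderogatory with characteristic polynomial $p$, set $H=\Phi^{-1}(p)$; then $H$ is nonderogatory (shown above) with the same characteristic polynomial as $A$, hence $A$ and $H$ are similar. For uniqueness: if two HT matrices $H,H'$ are both similar to $A$, they are similar to each other, hence have equal characteristic polynomials, hence $H=H'$ by injectivity of $\Phi$. (Existence can also be phrased constructively: a cyclic vector of $A$ exhibits $A$ in a companion form, which the inverse of $\Phi$ then carries to the unique HT representative; this recovers the constructive flavor of the Mackey--Mackey--Petrovic argument.)

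I expect the only real obstacle to be the bookkeeping in the inductive proof of the triangular structure of $p_n$: one must keep careful track of which of the $x_{-j}$ can appear in each coefficient of $p_n$, and must check that after combining the two relevant terms of the recursion the coefficient of $x_{-\ell}$ in the $\lambda^{\,n-1-\ell}$-coefficient comes out to be exactly the nonvanishing quantity $-(n-\ell)$. Everything surrounding this is soft: the passage from similarity classes to characteristic polynomials is pure canonical-form theory, and the nonderogatoriness of the HT ensemble is the one-line rank estimate above.
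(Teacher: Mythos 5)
The paper cites Theorem~\ref{thm:mackey} from \cite{MaMaPe99} without reproducing a proof, so there is no in-paper argument to compare against; I have therefore checked your argument on its own merits, and it is correct. The soft parts are as you say: a lower Hessenberg matrix with nonzero superdiagonal has $\operatorname{rank}(\lambda I - H)\geq n-1$ for every scalar $\lambda$ because deleting the last row and first column of $\lambda I - H$ leaves a triangular matrix with $-1$'s on its diagonal, so $H$ is nonderogatory; and over $\mathbb{R}$ two nonderogatory matrices are similar iff they share a characteristic polynomial via the rational canonical form. The one substantive step is the triangular structure of the characteristic polynomial of an HT matrix, and it does go through. Your recursion $p_k=\lambda p_{k-1}-\sum_{j=0}^{k-1}x_{-j}p_{k-1-j}$ is verified by expanding $\det(\lambda I_k - H_k)$ along the last row and noting that, since the Toeplitz matrix's trailing principal blocks are triangular with $-1$ on the diagonal, the cofactor of the $(k,j)$ entry equals $(-1)^{k-j}p_{j-1}$. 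Feeding this into your induction, the coefficient of $\lambda^{n-1-\ell}$ in $p_n$ picks up $x_{-\ell}$ only from the $\lambda p_{n-1}$ term (contributing $-(n-1-\ell)x_{-\ell}$, by the inductive hypothesis applied to $p_{n-1}$) and from the $j=\ell$ term $-x_{-\ell}p_{n-1-\ell}$ (contributing $-x_{-\ell}$ via the unit leading coefficient), for a total of $-(n-\ell)x_{-\ell}$; every other contribution, including the $j<\ell$ terms which bring in coefficients of $p_{n-1-j}$, involves only $x_0,\ldots,x_{-(\ell-1)}$ by the inductive hypothesis. The edge case $\ell = n-1$ also works since $\lambda p_{n-1}$ then contributes nothing and the $j=\ell$ term alone gives $-x_{-(n-1)}$. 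Since $-(n-\ell)\neq 0$ throughout, the system is solvable upper-triangularly and $\Phi$ is a bijection, giving both existence and uniqueness exactly as you argue. This is fully consistent with the constructive character of the Mackey--Mackey--Petrovi\'c result as the paper describes it.
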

Since elements of the space of $n \times n$ Hessenberg matrices, denoted $\mathcal{H}_n$,  are all nonderogatory (see Proposition 1f of 
\cite{MaMaPe99}), one has the following: 
\begin{corollary} \label{cor:iso}
   Every isospectral class in $\mathcal{H}_n$ with respective (possibly repeated) eigenvalues $\Lambda = \{ \lambda_1,\dots \lambda_n\}$, and denoted $\mathcal{O}_\Lambda$, contains a unique Toeplitz  matrix. 
\end{corollary}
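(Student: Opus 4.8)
The plan is to obtain \Cref{cor:iso} as an essentially immediate consequence of \Cref{thm:mackey}, using only two facts already on the table: every element of $\mathcal{H}_n$ is nonderogatory, and an HT matrix in the sense of \cite{MaMaPe99} is the same thing as a Toeplitz matrix lying in $\mathcal{H}_n$. So the first step I would take is to pin down that last identification. In the convention fixed in the introduction, a matrix lies in $\mathcal{H}_n$ iff it is lower Hessenberg with every first-superdiagonal entry equal to $1$ and every entry strictly above it equal to $0$; a Toeplitz matrix meeting these constraints is precisely a normalized HT form, and conversely the HT representative of a nonderogatory matrix carries the same normalization and hence lies in $\mathcal{H}_n$. (Note that the degenerate Toeplitz Hessenberg matrices with vanishing superdiagonal, e.g.\ scalar matrices, are derogatory and so fall outside the scope of \Cref{thm:mackey} anyway.)

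Granting this, existence is quick. Fixing the multiset $\Lambda=\{\lambda_1,\dots,\lambda_n\}$ and $p(x)=\prod_i (x-\lambda_i)$, I would start from the companion matrix of $p$ --- nonderogatory, with characteristic polynomial $p$ --- and apply \Cref{thm:mackey} to get its unique HT representative $T$. By the identification above, $T$ is a Toeplitz matrix in $\mathcal{H}_n$, and similarity to the companion matrix forces $T$ to have characteristic polynomial $p$, so $T\in\mathcal{O}_\Lambda$; this simultaneously shows $\mathcal{O}_\Lambda$ is nonempty. For uniqueness I would take two Toeplitz matrices $T_1,T_2\in\mathcal{O}_\Lambda$; since $\mathcal{O}_\Lambda\subseteq\mathcal{H}_n$, both are nonderogatory HT matrices with characteristic polynomial $p$, and a nonderogatory matrix is determined up to similarity by its characteristic polynomial (being similar to the companion matrix of that polynomial), so $T_1\sim T_2$ and the uniqueness clause of \Cref{thm:mackey} yields $T_1=T_2$.

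The hard part --- really the only part that is not automatic --- is the bookkeeping in the first step: making sure the superdiagonal normalization built into $\mathcal{H}_n$ agrees with the one implicit in the HT normal form of \cite{MaMaPe99}, so that ``Toeplitz and in $\mathcal{H}_n$'' and ``HT'' genuinely name the same set of matrices. The accompanying observation, elementary but worth stating, is that for nonderogatory matrices ``isospectral'' and ``similar'' coincide, so that $\mathcal{O}_\Lambda$ is exactly the intersection of a single $GL_n$-conjugacy orbit with $\mathcal{H}_n$. With those two remarks recorded, nothing further is needed; all of the real content of the corollary is carried by \Cref{thm:mackey}.
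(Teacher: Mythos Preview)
Your proposal is correct and follows exactly the paper's approach: the paper derives the corollary in one sentence from \Cref{thm:mackey} together with the fact (cited from \cite{MaMaPe99}) that every element of $\mathcal{H}_n$ is nonderogatory. You supply considerably more bookkeeping than the paper does---in particular the check that the HT normalization matches the superdiagonal-$1$ convention of $\mathcal{H}_n$, and the explicit argument that nonderogatory matrices with the same characteristic polynomial are similar---but this is elaboration of the same route, not a different one.
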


\begin{remark}
   This explicit result is an instance of a more general, but not constructive, Lie theoretic result due to Kostant \cite{K78}. 
In essence, this states that for the analogue of Hessenberg matrices in a semi-simple Lie algebra, there is a cross-section of the isospectral classes such that elements of a given isospectral class are conjugate to a unique element of the cross-section with the conjugation given by a unique
   lower unipotent matrix. In our case that cross-section is given by Toeplitz matrices.
   This is more precisely stated in Appendices \ref{App} and \ref{sec:proofthm2} where other, constructive, normal forms 
   of potential interest to us 
   are also presented. 
\end{remark}

These results and their applications provide a strong motivation for our study of the HT class. The focus of this paper is to study aspects of a class with the further restriction of being totally positive, the TPHT class.
\bigskip

For later use we introduce here the notion of the {\it symbol} of a Toeplitz operator, $T$, which is the
  Taylor-Laurent series (or Fourier series) whose $k^{th}$ coefficient is taken to be the constant value along the $k^{th}$ diagonal of a bi-infinite Toeplitz matrix. In the application for this paper we will be concerned with symbols that correspond to polynomials of Hessenberg type, meaning of the form $\varphi(T) = t^{-1} (1 + x_1 t + \dots +x_n t^n)$ (or trigonometric polynomials in the Fourier presentation).  Toeplitz matrices are then formed by taking finite size truncations of the associated   bi-infinite matrix. For more  on the characterization of Toeplitz operators in the bi-infinite  setting we refer the reader to the seminal paper of Aissen, Edrei, Schoenberg and Whitney  \cite{AESW51}. See also remarks in the Conclusions.

\subsection{Totally Positive Hessenberg matrices} \label{TPH}
TP matrices themselves have a rich structure, which is nicely described in Ando's survey  \cite{Ando}. One of the most salient of these are spectral oscillation properties that generalize classical Perron-Frobenius results for positive matrices. More precisely one has 

\begin{theorem}[\cite{Ando}] \label{thm:osc}
    If $A$ is an $n \times n$ strictly TP matrix then all its eigenvalues are real, distinct and positive.
    Let $\mathbf{u}_k$ denote the (real) eigenvector corresponding to the $k^{th}$ eigenvalue (in descending ordered), then $\mathbf{u}_k$ has exactly $k-1$ variations of sign. Moreover, the nodes of ${u}_k(t)$  and those of ${u}_{k+1}(t)$ are interlacing. 
\end{theorem}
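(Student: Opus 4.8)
\noindent\emph{Proof idea.} This is the classical Gantmacher--Krein oscillation theorem (as surveyed in \cite{Ando}), and I would organize it in two stages: the spectral claims (real, distinct, positive eigenvalues) via compound matrices and Perron--Frobenius, and the oscillatory claims (the sign-change count of $\mathbf{u}_k$ and the interlacing of nodes) via the variation-diminishing property of totally positive matrices. For the eigenvalues: for $1\le r\le n$ let $C_r(A)$ denote the $r$-th compound matrix of $A$, whose entries are the $r\times r$ minors of $A$; since $A$ is strictly TP every entry of $C_r(A)$ is strictly positive, and by the Cauchy--Binet identity the eigenvalues of $C_r(A)$ are exactly the products $\lambda_{i_1}\cdots\lambda_{i_r}$ over all $1\le i_1<\cdots<i_r\le n$. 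Applying the Perron--Frobenius theorem to each entrywise-positive matrix $C_r(A)$ yields, for every $r$, a simple, real, positive eigenvalue that is strictly dominant in modulus. Inducting on $r$: the case $r=1$ says $\lambda_1:=\rho(A)$ is real, positive, simple and of strictly largest modulus; at stage $r$ the Perron root of $C_r(A)$ must equal the product of the $r$ eigenvalues of $A$ of largest modulus, and its simplicity forces $\lambda_r\in\mathbb{R}$ with $|\lambda_r|>|\lambda_{r+1}|$, while its positivity together with that of $\lambda_1\cdots\lambda_{r-1}$ forces $\lambda_r>0$. Hence $\lambda_1>\lambda_2>\cdots>\lambda_n>0$, so the eigenvalues are real, simple and positive and each $\mathbf{u}_k$ is well defined up to scale.

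\emph{Eigenvectors.} For a nonzero $\mathbf{x}\in\mathbb{R}^n$ write $S^-(\mathbf{x})$ and $S^+(\mathbf{x})$ for its minimal and maximal numbers of sign changes (zero components counted most, resp.\ least, favorably). The key input is the variation-diminishing property of strictly TP matrices, $S^+(A\mathbf{x})\le S^-(\mathbf{x})$ for all $\mathbf{x}\ne 0$, together with its equality refinement (if $S^+(A\mathbf{x})=S^-(\mathbf{x})$ then $A\mathbf{x}$ and $\mathbf{x}$ have the same sign in their first and last nonzero components); this follows from Cauchy--Binet and the sign pattern of the minors of $A$. The target is the sharpened two-sided estimate: for all $1\le p\le q\le n$ and every nonzero $\mathbf{x}=\sum_{k=p}^{q}c_k\mathbf{u}_k$,
\[
p-1\;\le\;S^-(\mathbf{x})\;\le\;S^+(\mathbf{x})\;\le\;q-1 .
\]
Specializing to $p=q=k$ gives $S^-(\mathbf{u}_k)=S^+(\mathbf{u}_k)=k-1$, i.e.\ $\mathbf{u}_k$ has exactly $k-1$ variations of sign (and one further checks, via the equality refinement applied to $\mathbf{u}_k=\lambda_k^{-1}A\mathbf{u}_k$, that its components are all nonzero). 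I would obtain the upper bound $S^+(\mathbf{x})\le q-1$ from the fact that $\mathbf{x}$ lies in the $q$-dimensional dominant invariant subspace $\mathrm{span}\{\mathbf{u}_1,\dots,\mathbf{u}_q\}$: along the orbit $\mathbf{x},A\mathbf{x},A^2\mathbf{x},\dots$ the quantity $S^+$ is nonincreasing by variation diminishing, while $A^m\mathbf{x}$ converges in direction to a dominant eigenvector whose sign count is controlled by induction (anchored at $q=1$ by the strict positivity of the Perron vector $\mathbf{u}_1$). The lower bound then follows by duality: with $D=\diag\bigl((-1)^{i}\bigr)_{i=1}^{n}$, the matrix $\widetilde{A}:=DA^{-1}D$ is again strictly TP (a classical property of inverses of TP matrices), its eigenvectors are the $D\mathbf{u}_k$ in reverse order, and conjugation by $D$ turns $S^{\pm}$ into $n-1-S^{\mp}$, so the upper bound for $\widetilde{A}$ is precisely the lower bound for $A$. (Equivalently one may work with the left eigenvectors, which are eigenvectors of the strictly TP matrix $A^{T}$ and biorthogonal to the $\mathbf{u}_k$.)

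\emph{Interlacing.} Applying the two-sided estimate with $p=k$ and $q=k+1$ shows that every nonzero combination $\alpha\mathbf{u}_k+\beta\mathbf{u}_{k+1}$ has between $k-1$ and $k$ sign changes. A discrete Sturm-type argument then forces the $k-1$ nodes of $\mathbf{u}_k$ and the $k$ nodes of $\mathbf{u}_{k+1}$ to interlace strictly: a failure of interlacing would exhibit a sign interval of $\mathbf{u}_{k+1}$ on which $\mathbf{u}_k$ does not change sign, and choosing $\alpha,\beta$ suitably one then builds a combination with either $k+1$ sign changes (contradicting $S^+\le k$) or only $k-2$ (contradicting $S^-\ge k-1$).

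I expect the oscillatory stage to be the main obstacle: the compound-matrix/Perron--Frobenius argument for the spectrum is essentially mechanical, whereas establishing the sharpened two-sided sign-change estimate — with its delicate bookkeeping of $S^-$ versus $S^+$ in the presence of vanishing components and its reliance on the equality case of the variation-diminishing property — together with the ensuing discrete Sturm argument for the interlacing, is where the genuine work lies.
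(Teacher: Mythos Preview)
The paper does not supply a proof of this theorem; it is quoted as a classical result from Ando's survey \cite{Ando} and used as background. There is therefore no ``paper's own proof'' to compare against. That said, your outline is exactly the standard Gantmacher--Krein route one finds in \cite{Ando}: compound matrices plus Perron--Frobenius for the eigenvalue claims, the variation-diminishing property of strictly TP matrices for the two-sided sign-change bound on combinations of eigenvectors, and a discrete Sturm argument for the interlacing of nodes. Your identification of the delicate step (the sharp $S^-/S^+$ estimate and its equality case) is also accurate.
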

By a {\it variation of sign} here we mean consecutive entries in the eigenvector where the sign changes. For 
${u}_k(t)$ we have the following general definition. For a vector $\mathbf{x} \in \mathbb{R}^n$ define the piece-wise linear function $x(t)$ for $1\leq t \leq n$ by
\[
x(t) = (k+1-t) x_k + (t-k) x_{k+1} \quad \mbox{for} \,\,\,
k \leq t \leq k+1.
\]
The nodes of $x(t)$ are the roots of $x(t) = 0$.

If $A$ is TP but not strictly so, then it is in the closure of strictly TP matrices (see Theorem 2.7 of \cite{Ando}). In particular, if $A$ has full rank with distinct eigenvalues (which will be the case for the Hessenberg matrices we will consider) then the stated results of the theorem continue to hold but with the possibility that some zero crossings might coalesce at an intermediate node.

\begin{definition}
    We will denote the cases of $n \times n$ Toeplitz matrices with (factored) symbol $\varphi(T) = t^{-1} \prod_{j=1}^m (1+a_jt)^m$ by  $({\textbf a_m},n)$. Let $\textbf 1_m$ denote the vector in $\mathbb R^m$ whose components are all $1$. When $m=n$ we just write ${\textbf a_n}$ or $\textbf 1_n$ for the associated Toeplitz matrix. 
\end{definition}

We illustrate all this in the following examples along with \Cref{fig:1n} for the case of   Toeplitz matrices with symbols $t^{-1}(1+t)^5$ and $t^{-1}(1+t)^{10}$, where we display the relevant information in two panels for $\textbf 1_5$ 
and $\textbf 1_{10}$, respectively. In the first case \eqref{T5 evals} shows the  eigenvalues ordered by decreasing size. Then
\eqref{T5 evects} displays the matrix of the associated eigenvectors in corresponding order. Finally Figure \ref{fig:osc_1n_5} shows the piece-wise linear interpolations of the eigenvectors, oriented vertically to the correspondence with the eigenvectors in \eqref{T5 evects}. Bars are included in this figure to mark where the zero-crossings occur. 
A similar set of panels is shown for $\textbf 1_{10}$ culminating in Figure \ref{fig:osc_1n_10}.

    For the case of symbol $t^{-1}(1+t)^5$, whose coefficients would then align with the standard binomial coefficients,  the matrix truncated to size $5$ is as follows. 
    \begin{align} \label{T5}
        \textbf 1_5 &= \begin{bmatrix}
            5   &  1  &   0  &   0 &    0\\
        10 &    5&     1  &   0  &   0\\
        10  &  10 &    5   &  1   &  0\\
         5   & 10  &  10    & 5    & 1\\
         1    & 5   & 10    &10     &5
        \end{bmatrix} = V\Lambda V^{-1},
    \end{align}
    where $\Lambda$ and $V$ denote, respectively, the eigenvalue and eigenvector matrices, with approximate computed forms of
    \begin{align}
        \Lambda &\approx \operatorname{diag}(11.0024, 
        7.9317, 
        4.3187, 
        1.5285, 
        0.2187), \label{T5 evals}\\
        V &\approx \begin{bmatrix}
            0.0047  & -0.0065&    0.0105&   -0.0182&   -0.0258\\
        0.0283   &-0.0190   &-0.0071 &   0.0632 &   0.1234\\
        0.1226   & 0.0091 &  -0.0999  & -0.0373  & -0.3319\\
        0.4061   & 0.2822  &  0.0347   &-0.3203   & 0.6111\\
        0.9051   & 0.9591   & 0.9943   & 0.9443   &-0.7075
        \end{bmatrix} \label{T5 evects}
    \end{align}

    \begin{figure}[t] \label{Fig1}
\centering
\subfloat[$\textbf 1_{5}$]{%
    \includegraphics[width=.6\textwidth, angle = 90]{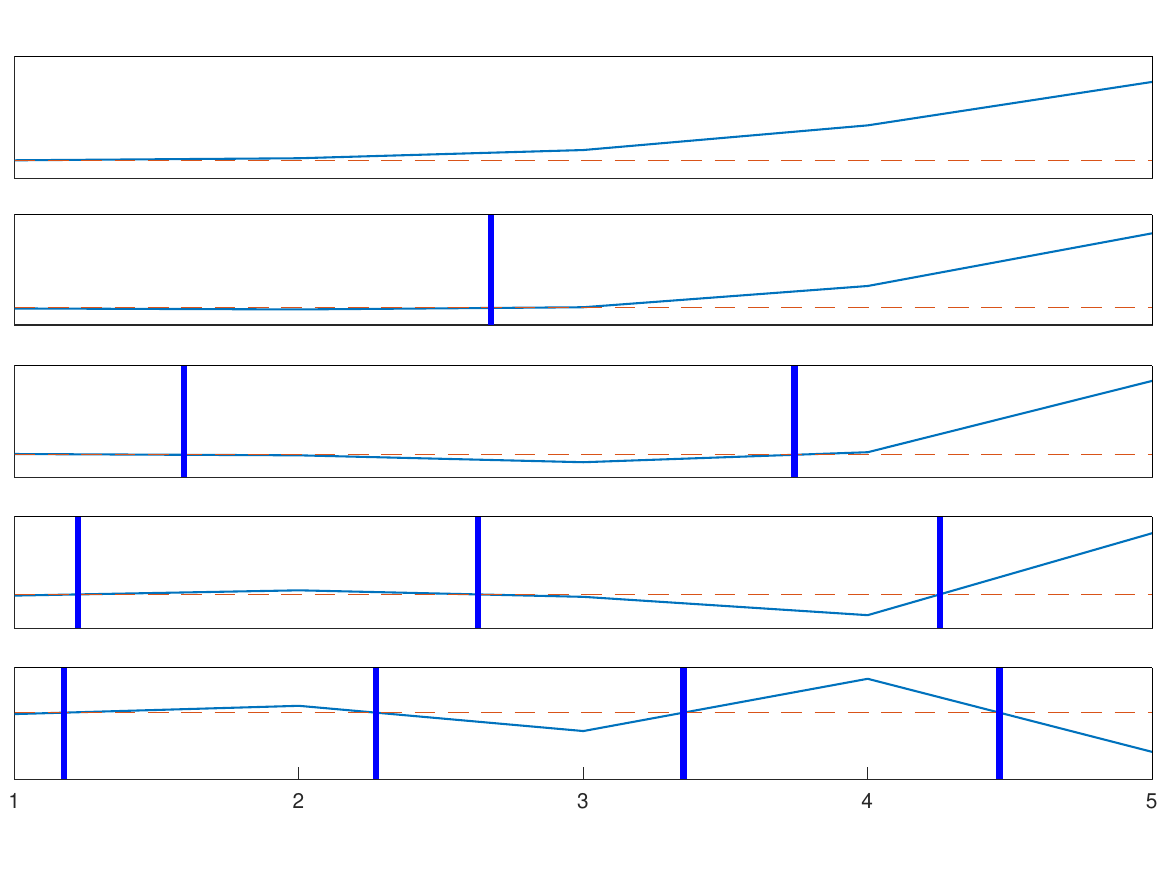}%
    \label{fig:osc_1n_5}%
    }%
\subfloat[$\textbf 1_{10}$]{%
    \includegraphics[width=.6\textwidth, angle = 90]{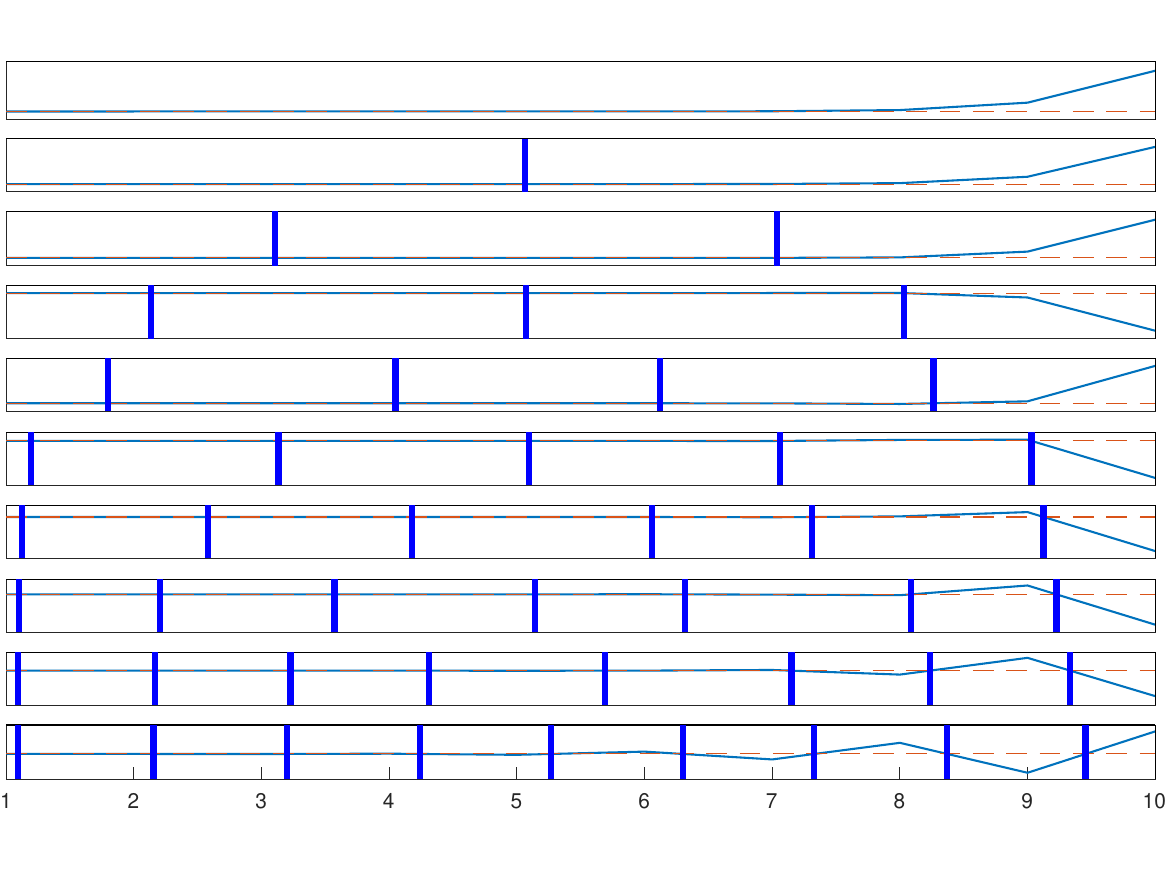}%
    \label{fig:osc_1n_10}%
    }%
\caption{Oscillating eigenvector interlacing zero maps for $n\times n$ matrix $\textbf 1_n$, formed using $n = 5,10$, ordered in decreasing associated eigenvalue size, with dotted red line indicating the standard $x$-axis and blue bars indicating associated zero crossings.}
\label{fig:1n}
\end{figure}

    
    One may directly check that \eqref{T5} is TP. To further illustrate the TP properties of the eigenvalues and eigenvectors of \eqref{T5}, we have that the (computed) eigenvalues are strictly positive while the associated (computed) eigenvector matrix  illustrates the sign variation property of the theorem. In summary, the oscillatory properties we exemplify in this example include:
    \begin{enumerate}[i]
        \item {Perron-Frobenius theorem}: There is an eigenvector for the largest eigenvalue with positive components. This is clearly seen in the leftmost displays of and Figures \ref{fig:osc_1n_5} and \ref{fig:osc_1n_10} as well as by the first column of \eqref{T5 evects}. 
    
\item Each successive eigenvector (with respect to the ordering of the eigenvalues from largest to smallest) introduces an additional sign change from the prior step. For instance in  \eqref{T5 evects}  one sees that the entries in the leftmost eigenvector are all positive; in the next eigenvector there is one sign change  between the second and third entry; in the third eigenvector there are two sign changes, one between the first and second entries and another between the third and fourth entries; and so on.    This is  the variation of signs stated in the theorem.

        \item The piecewise linear maps that connect the points $(j, v_{k,j})$  for successive eigenvectors $\mathbf{v}_k$ have interlacing zeros (nodes of the
        $v_k(t)$ described in the theorem.)  This interlacing is clear from the interlacing of the consecutive bar configurations in Figures 
        \ref{fig:osc_1n_5} and \ref{fig:osc_1n_10}. 
    \end{enumerate}


It is natural to seek a characterization of TPH matrices
along the lines of what was described for Toeplitz matrices. A step in that direction was carried out in \cite{ER23}, 
which approaches this in terms of LU factorization. This essentially follows from the Whitney-Loewner theorem \cite{FZ00}
. The result is 

\begin{theorem}[\cite{ER23}] \label{thm:wp}
Let $\mathcal{B}$ denote the subvariety of $\mathcal{H}_n$ comprised  of upper bidiagonal matrices of the form 
\begin{eqnarray*}
\left[\begin{array}{cccc}
* & 1\\
& *  & \ddots\\
&&\ddots & 1\\
&&& * 
\end{array}\right]
\end{eqnarray*}
and let $\mathcal{B}^{\geq 0}$ denote the submanifold in which all diagonal entries are positive. Then the subvariety, TPH, of totally positive Hessenberg matrices has the decomposition     
\begin{eqnarray} \label{resphase}
\mathcal{H}^{\geq 0} = (N_-^{\geq 0} \times \mathcal{B}^{\geq 0}~) 
\end{eqnarray}
where the superscript, $\{\geq 0\}$, denotes total positivity.  ($N_-$ here denotes the 
space of $n \times n$ lower unipotent
matrices.) 
\end{theorem}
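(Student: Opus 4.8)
The plan is to identify the decomposition \eqref{resphase} with the $LU$ factorization of $H$ and to feed in the Whitney-Loewner factorization theory of \cite{FZ00}. I would prove the two set inclusions separately and then note that the multiplication map $(L,B)\mapsto LB$ realizes the claimed product structure as a bijection. One remark at the outset: since $\mathcal B^{\geq 0}$ has strictly positive diagonal, the right-hand side of \eqref{resphase} consists only of nonsingular matrices, so \eqref{resphase} is to be read on the full-rank stratum of TPH — which is exactly the case used throughout the paper.

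For $\mathcal H^{\geq 0}\subseteq N_-^{\geq 0}\times\mathcal B^{\geq 0}$: let $H$ be a full-rank TPH matrix. Each leading principal minor of $H$ is a minor of a nonsingular totally nonnegative matrix, hence nonnegative, and in fact strictly positive, since a nonsingular totally nonnegative matrix has positive leading principal minors (see \cite{Ando}). Consequently $H$ admits a unique $LU$ factorization $H=LB$ with $L$ lower unipotent and $B$ upper triangular, and the diagonal entries of $B$, being ratios of consecutive leading principal minors of $H$, are positive. The Whitney-Loewner theorem \cite{FZ00} — equivalently, the fact that the triangular factors of a nonsingular totally nonnegative matrix are again totally nonnegative — now gives $L\in N_-^{\geq 0}$ and $B$ totally nonnegative. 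It remains to see that $B$ is upper bidiagonal with unit superdiagonal, and this is the only step in which the Hessenberg structure enters. Writing $B=L^{-1}H$: since $L^{-1}$ is lower triangular and $H$ is lower Hessenberg (so $H_{kj}=0$ for $j>k+1$), every term of $(L^{-1}H)_{ij}=\sum_k (L^{-1})_{ik}H_{kj}$ vanishes unless $k\leq i$ and $k\geq j-1$, which is impossible when $j>i+1$; combined with $B$ upper triangular, this forces $B_{ij}=0$ unless $j\in\{i,i+1\}$. Matching superdiagonal entries in $H=LB$ and using $L_{i,i+1}=0$, $L_{ii}=1$ then gives $B_{i,i+1}=H_{i,i+1}=1$, so $B\in\mathcal B^{\geq 0}$.

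For the reverse inclusion, let $L\in N_-^{\geq 0}$, $B\in\mathcal B^{\geq 0}$ and set $H=LB$. Total nonnegativity is preserved under products by the Cauchy--Binet formula, and $B$, being a nonnegative bidiagonal matrix, is itself totally nonnegative, so $H$ is totally nonnegative. Running the support count of the previous paragraph in reverse (here $B_{kj}\neq 0$ forces $k\geq j-1$ and $L_{ik}\neq 0$ forces $k\leq i$) shows $H_{ij}=0$ for $j>i+1$, so $H$ is lower Hessenberg, and the same superdiagonal computation gives $H_{i,i+1}=1$; thus $H\in\mathcal H^{\geq 0}$. Finally, uniqueness of the $LU$ factorization of a nonsingular matrix makes $(L,B)\mapsto LB$ injective, while the first inclusion exhibits its inverse (which is rational in the entries of $H$, hence smooth on the full-rank locus), so \eqref{resphase} is a genuine product decomposition.

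The one step carrying real content is the Whitney-Loewner assertion that the $LU$ factors of a nonsingular totally nonnegative matrix are themselves totally nonnegative; this is the \emph{only} nontrivial input, and in the present setting it can be derived from the bidiagonal factorization of totally nonnegative matrices, or from the observation that Neville elimination on such a matrix uses only nonnegative multipliers. Everything else — positivity of the leading principal minors, the Hessenberg and bidiagonal sparsity bookkeeping, and the normalization of the superdiagonal — is routine, so I expect no genuine obstacle beyond correctly invoking that factorization theorem.
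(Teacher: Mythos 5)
Your proof is correct and takes exactly the route the paper indicates: the paper does not prove \cref{thm:wp} itself (it is cited from \cite{ER23} with the remark that it ``essentially follows from the Whitney-Loewner theorem'' via LU factorization), and your argument is precisely that reconstruction — positivity of leading principal minors of a nonsingular TN matrix to secure the $LU$ split, Whitney--Loewner/Cryer to keep both factors TN, Hessenberg support-counting to force $U$ bidiagonal with unit superdiagonal, Cauchy--Binet for the converse, and $LU$-uniqueness for the bijection. Your opening remark about restricting to the full-rank stratum is the right reading of the loosely stated set equality, since $\mathcal{B}^{\geq 0}$ forces nonsingularity on the right-hand side.
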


As is further discussed in \cite{ER23}
, the LU decomposition described in the theorem can be iterated to define a dynamic process on $\mathcal{H}_n$. This is an isospectral process that can be used to approximate eigenvalues by iteratively computing the $A = A^{(0)} = LU$ factorization of an input matrix and then inverting the order of the factors for $A^{(1)} = UL = UA^{(0)}U^{-1}$; this is followed by iteratively computing the LU factorization of $A^{(i+1)}$ from input $A^{(i)}$. TP matrices have the additional property that each LU factor is itself TP while also the product of TP matrices is TP \cite{Ando,Cryer76}. So each intermediate matrix in the iterated LU algorithm is TP. Since the process is isospectral, the eigenvalues never change. The eigenvectors do change; however, they maintain the oscillatory properties, stated in Theorem \ref{thm:osc}, throughout. For example,  \Cref{fig:osc_1n_5_step10,fig:osc_1n_10_step10} show the oscillating eigenvector interlacing zero maps for the 10$^{th}$ iterate  of the LU map (denoted $\textbf 1_n^{(10)}$) using $\textbf 1_n$ for $n = 5,10$ (cf. Figures \ref{fig:osc_1n_5} and \ref{fig:osc_1n_10}). In Appendix \ref{App} an explicit expression for the eigenvectors is given in terms of normal forms 
(see Corollary \ref{cor:efcn}).

\begin{figure}[t] \label{Fig2}
\centering
\subfloat[$\textbf 1_{5}^{(10)}$]{%
    \includegraphics[width=.6\textwidth, angle = 90]{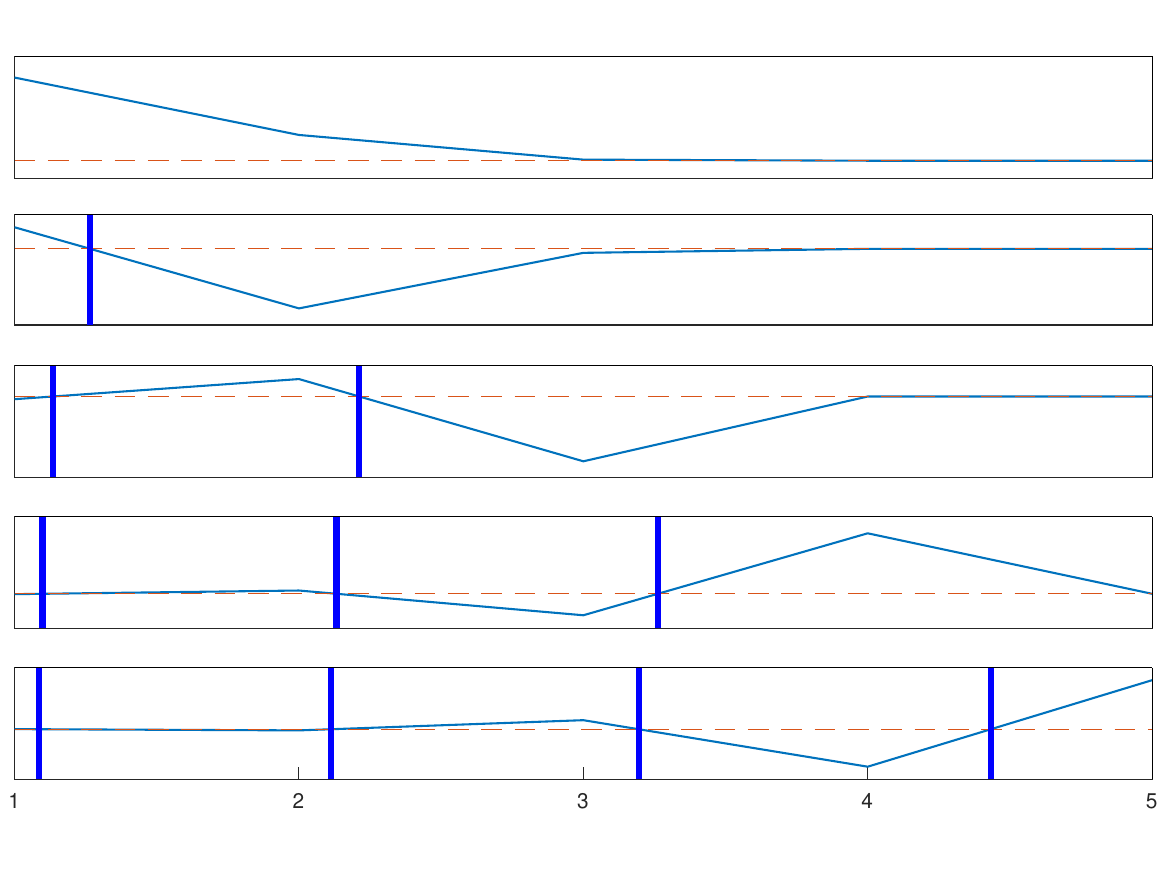}%
    \label{fig:osc_1n_5_step10}%
    }%
\subfloat[$\textbf 1_{10}^{(10)}$]{%
    \includegraphics[width=.6\textwidth, angle = 90]{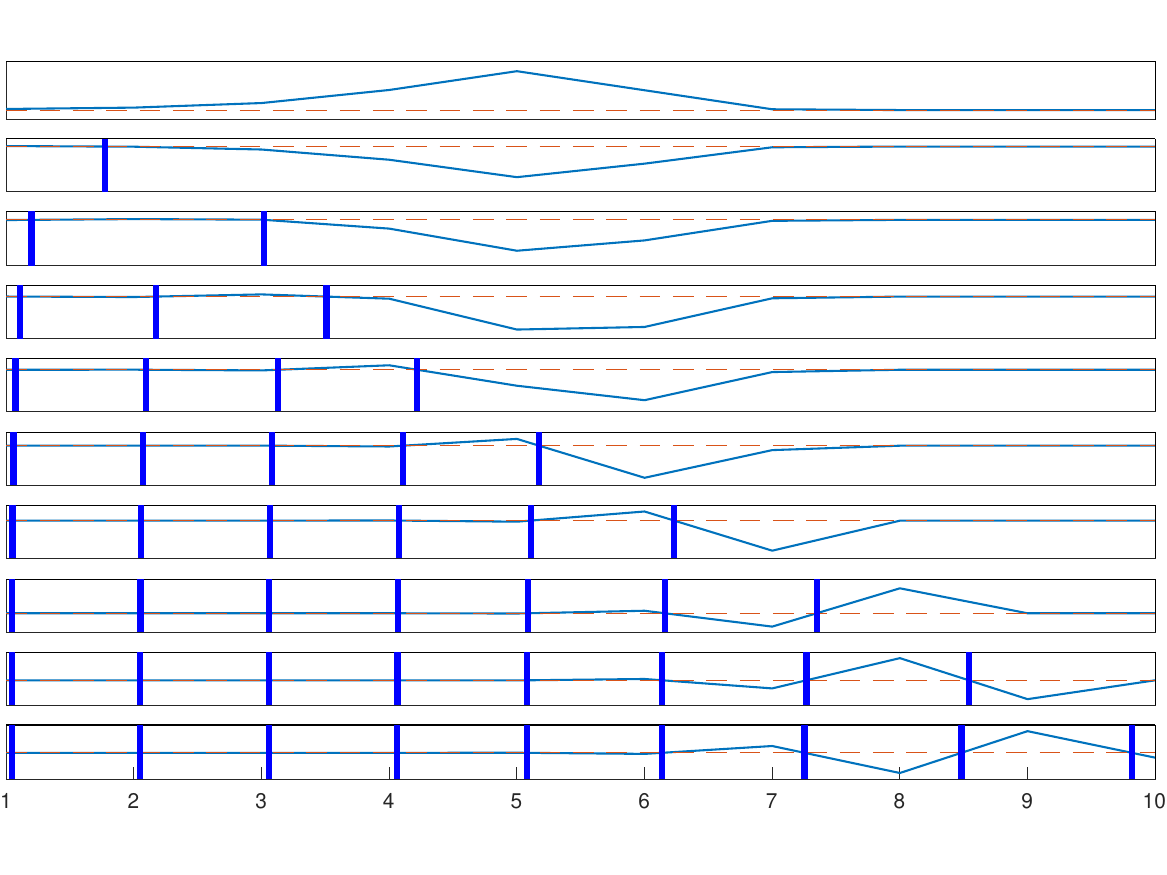}%
    \label{fig:osc_1n_10_step10}%
    }%
\caption{Oscillating eigenvector interlacing zero maps for $n\times n$ matrix $\textbf 1_n^{(10)}$, formed using $n = 5,10$.}
\end{figure}

\subsection{The Hessenberg-Toeplitz Normal form and Total Positivity }

TPH matrices are, of course, a subclass of the general class of Hessenberg matrices and in particular they form a subclass within each isospectral class, $\mathcal{O}_\Lambda$,  of Hessenberg matrices {with fixed spectrum $\Lambda$}. This raises the natural question of whether the unique Toeplitz matrix of Corollary \ref{cor:iso} is TP, i.e. an element of TPHT. (Of course in these TP cases one should restrict attention to $\Lambda$ with all eigenvalues non-negative.)

The Hessenberg Toeplitz (HT) operators we consider are finite banded with symbols, as defined in Section \ref{sec:HT}, of the (factored) form

\begin{equation} \label{rootedsymb}
\varphi(T)(z) = z^{-1}  \prod_{\ell=1}^n
(1 + a_\ell z).
\end{equation}
We note that  
this HT symbol amounts to an upward shift of the diagonals for operators corresponding to symbols that are polynomial and, therefore, whose associated bi-infinite matrix operator  is lower triangular. Since the $a_\ell$ are non-negative {in the TPHT case},  it then follows (see Theorem \ref{thm:ET}) that the corresponding $n\times n$ truncations, $T_n$, of these {TP}HT operators have non-negative minors and so are TP. Hence, the $n \times n$ TPHT matrices depend on $n$ real, non-negative parameters, the $a_\ell$.

We then have the following key result.

\begin{corollary} \label{cor:open}
TPHT is the closure of an open set within the class of HT matrices and therefore represents the closure of
an open set of isospectral equivalence classes in $\mathcal{H}_n$. 
\end{corollary}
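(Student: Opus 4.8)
Here is how I would approach the proof of Corollary~\ref{cor:open}.

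The plan is to exhibit the relevant open set concretely from the non-negative parametrization of TPHT and then carry the statement over to isospectral classes via the Toeplitz normal form of Corollary~\ref{cor:iso}. Let $\Pi\colon\mathbb R^n\to\mathrm{HT}$ send $a=(a_1,\dots,a_n)$ to the $n\times n$ truncation of the Toeplitz operator with symbol $z^{-1}\prod_{\ell=1}^n(1+a_\ell z)$. Since its matrix entries are the elementary symmetric polynomials in the $a_\ell$, the map $\Pi$ is continuous, and by the discussion preceding the corollary (in particular Theorem~\ref{thm:ET}) its restriction to the closed orthant $P=\{a\in\mathbb R^n:\ a_1,\dots,a_n\ge 0\}$ has image exactly $\mathrm{TPHT}$. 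I would also record at the outset that $\mathrm{TPHT}$ is closed in $\mathrm{HT}$, being the intersection of the affine subspace $\mathrm{HT}$ with the set of totally positive matrices, which is closed in $M_n(\mathbb R)$ as a finite intersection of the closed conditions that each minor be non-negative.

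Next I would produce the open set. Let $C=\{a\in\mathbb R^n:\ a_1>a_2>\cdots>a_n>0\}$, an open subset of $\mathbb R^n$ on which $\Pi$ is injective, since from $\Pi(a)$ one recovers $-1/a_\ell$ as the roots of the symbol and hence the $a_\ell$ in decreasing order. Identifying $\mathrm{HT}$ with $\mathbb R^n$ through its symbol coefficients, $\Pi|_C$ becomes a continuous injection of an open subset of $\mathbb R^n$ into $\mathbb R^n$, so by Brouwer's invariance of domain $O:=\Pi(C)$ is open in $\mathrm{HT}$; and $O\subseteq\mathrm{TPHT}$ because $C\subseteq P$. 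Finally $C$ is dense in the closed cone $\{a_1\ge\cdots\ge a_n\ge 0\}$, the map $\Pi$ is continuous and symmetric in the $a_\ell$, and the image of that cone under $\Pi$ is all of $\Pi(P)=\mathrm{TPHT}$; hence $\overline O\supseteq\mathrm{TPHT}$, and combined with $\overline O\subseteq\mathrm{TPHT}$ from closedness this yields $\overline O=\mathrm{TPHT}$, which is the first assertion.

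For the second assertion, Corollary~\ref{cor:iso} says that sending an isospectral class $\mathcal O_\Lambda\subset\mathcal H_n$ to its unique Toeplitz (hence HT) representative is a bijection onto $\mathrm{HT}$. Both sides are moreover parametrized by the $n$ coefficients of the characteristic polynomial: every monic degree-$n$ polynomial is the characteristic polynomial of a companion matrix, which lies in $\mathcal H_n$, and two HT matrices with the same characteristic polynomial are similar (Hessenberg matrices being nonderogatory, their minimal and characteristic polynomials coincide) and therefore equal by the uniqueness in Corollary~\ref{cor:iso}. Thus $\mathrm{HT}\to\{\text{characteristic polynomials}\}$ is a continuous bijection of $\mathbb R^n$ with itself, hence a homeomorphism, again by invariance of domain; under it the open set $O$ corresponds to an open family of isospectral classes whose closure $\mathrm{TPHT}$ is the closure of that family.

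The step I expect to be the main obstacle is showing that $O$ is genuinely \emph{open in} $\mathrm{HT}$. One cannot get this the easy way --- by declaring the HT matrices with distinct positive $a_\ell$ to be strictly totally positive and quoting openness of the strictly-TP locus --- because a banded matrix of size $\ge 3$ always has vanishing minors and is never strictly TP; indeed no TPHT matrix is strictly TP once $n\ge 3$. The openness must instead be extracted from the parameter side via the invariance-of-domain argument above (equivalently, from the non-vanishing of the Vandermonde-type Jacobian of the elementary symmetric functions on the chamber $C$), and injectivity of $\Pi$ on $C$ is precisely what prevents the image from being a lower-dimensional slice of $\mathrm{HT}$. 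Everything else is routine point-set topology once the identification $\mathrm{TPHT}=\Pi(P)$ is taken as given from the preceding discussion.
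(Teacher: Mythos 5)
Your proof is correct and is substantially more careful than the paper's own argument, which for the first claim offers only the one-line dimension count that both HT and TPHT are $n$-dimensional. That count by itself does not explain why an $n$-dimensional closed subset of HT is the closure of an open set; your argument supplies exactly the missing ingredients. You note that TPHT is closed in HT (each minor-nonnegativity condition is a closed inequality), that the parametrization $\Pi$ restricted to the chamber $C=\{a_1>\cdots>a_n>0\}$ is a continuous injection between $n$-manifolds, so its image $O$ is open by invariance of domain, and that $C$ is dense in the cone $\{a_1\geq\cdots\geq a_n\geq 0\}$ whose $\Pi$-image is all of TPHT, giving $\overline{O}=\mathrm{TPHT}$. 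Your aside that the lazy route via openness of the strictly-TP locus must fail --- since a banded matrix of size $\geq 3$ always has vanishing minors and so no TPHT matrix of that size is strictly TP --- is a genuinely useful observation the paper leaves unstated. For the second claim the paper merely records the set-theoretic bijection $\mathcal{O}_\Lambda\mapsto T_\Lambda$ coming from Corollary~\ref{cor:iso} and Kostant's theorem, whereas you actually check that it is a homeomorphism by factoring through characteristic polynomials (using that Hessenberg matrices are nonderogatory) and invoking invariance of domain once more; that is what one needs to transport the open set across the correspondence. Both arguments rest on the same underlying fact, the $n$-parameter Toeplitz normal form, but yours is the rigorous version.
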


The first statement follows because the class of HT $n \times n$ matrices is $n$-dimensional, as is TPHT. The rest of the statement follows from 
Corollary \ref{cor:iso}. By Kostant's theorem (cf. Appendix \ref{App}) the isospectral classes in $\mathcal{H}_n$ are conjugacy classes under the adjoint action of $N_-$ on $\mathcal{H}_n$. Hence, by Corollary \ref{cor:iso}, we have a 1:1 correspondence
\begin{eqnarray*}
   \mathcal{H}_n/N_- &\to &  HT \\
   \mathcal{O}_\Lambda &\mapsto & T_\Lambda
\end{eqnarray*}
 defined by mapping the isospectral class to the unique HT matrix it contains, denoted by $T_\Lambda$. Since, by Corollary \ref{cor:iso}  
 this  is 1:1 and TPHT is the closure of an  open subset  of HT, the latter statement of
 Corollary \ref{cor:open} follows.

So TPHT is a robust and natural class to study.

\subsection{LU Factorization in the HT Ensemble} \label{LUfact}
We pause here to discuss how one may identify the unique HT operator within a given isospectral class of $\mathcal{H}_n$.
For this we can make use of the LU decomposition described in Theorem \ref{thm:wp}.

For convenience of notation, in the following definition and theorem, if $n\in\mathbb{N}$, denote by $[n]$ the set $\{1,2,\ldots,n\}$. If $n<1$, we shall take $[n]$ to be the empty set.

\begin{definition}
Let $A$ be an $n\times n$ matrix and $S\subseteq \{1,2,\ldots,n\}$. If $S$ is non-empty, denote by $\tau_S^\text{init}(A)$ the minor for the associated sub-matrix of $A$ with columns given by the initial $|S|$ columns of $A$ and rows indexed by $S$:
$$\tau_S^\text{init}(A) = \det\left(A_{S,[|S|]}\right).$$
If $S$ is the empty set, we take this to be $1$. 
\end{definition}

\begin{theorem} \label{thm:LU-T}
Let $T$ be an $n\times n$ TPHT matrix {in $\mathcal{H}^{\geq 0}$, defined in Theorem \ref{thm:wp}.}  Then $T$ has the LU decomposition $T=LU$ where
\begin{align*}
    (L)_{ij} &= 
    \left\{
    \begin{array}{cc}
0 & i<j\\
\frac{\tau_{\{i\}\cup[j-1]}^\text{init}(T)}{\tau_{[j]}^\text{init}(T) } & i\geq j
    \end{array}
    \right.\\
    (U)_{ij} &= 
    \left\{
    \begin{array}{cc}
\frac{\tau_{[i]}^\text{init}(T)}{\tau_{[i-1]}^\text{init}(T)} & i=j\\
1 & j=i+1\\
0 & \text{otherwise.}
    \end{array}
    \right.
\end{align*}
\end{theorem}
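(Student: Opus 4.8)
The plan is to exploit the structure provided by Theorem~\ref{thm:wp}: since $T$ is TPHT and lies in $\mathcal{H}^{\geq 0}$, we know a priori that $T = LU$ exists with $L \in N_-^{\geq 0}$ lower unipotent and $U \in \mathcal{B}^{\geq 0}$ upper bidiagonal with positive diagonal and $1$'s on the superdiagonal (this is exactly the form of $U$ claimed). So the existence and uniqueness of the factorization come for free; what remains is to \emph{identify} the entries of $L$ and $U$ in terms of the initial minors $\tau_S^\text{init}(T)$. First I would record the consequence of uniqueness of LU together with the banded/bidiagonal form of $U$: because $U$ has nonzero entries only on the diagonal and first superdiagonal, for each column index $j$ the first $j$ columns of $T$ are a linear combination of the first $j$ columns of $L$ (the contribution of later columns of $L$ is killed by the zero pattern of $U$), with the coefficient matrix being the leading $j \times j$ block of $U$, which is itself upper bidiagonal.

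The key step is then a Cauchy--Binet / column-operation argument applied to the initial minors. Fix $S \subseteq [n]$ with $|S| = j$. The submatrix $T_{S,[j]}$ equals $L_{S,[j]} \cdot U_{[j],[j]}$ by the block-triangularity just noted, so by multiplicativity of the determinant
\[
\tau_S^\text{init}(T) = \det\!\big(T_{S,[j]}\big) = \det\!\big(L_{S,[j]}\big)\,\det\!\big(U_{[j],[j]}\big).
\]
Now $U_{[j],[j]}$ is upper triangular, so $\det(U_{[j],[j]}) = \prod_{k=1}^{j} (U)_{kk}$. Applying this with $S = [j]$ and using that $L$ is unipotent (so $\det(L_{[j],[j]}) = 1$) gives $\tau_{[j]}^\text{init}(T) = \prod_{k=1}^{j}(U)_{kk}$; dividing consecutive ratios yields $(U)_{ii} = \tau_{[i]}^\text{init}(T)/\tau_{[i-1]}^\text{init}(T)$, which is the claimed formula for $U$. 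For $L$, take $S = \{i\} \cup [j-1]$ (with $i \geq j$, so $|S| = j$): then $\det(L_{S,[j]})$ is, up to the row-ordering sign, the determinant of a lower-unipotent-type $j\times j$ matrix whose first $j-1$ rows are rows $1,\dots,j-1$ of $L$ and whose last row is row $i$; expanding along the structure of $L$ (its leading $(j-1)\times(j-1)$ block is lower unipotent, and the relevant cofactor picks out exactly $(L)_{ij}$) shows $\det(L_{S,[j]}) = (L)_{ij}$. Combining, $\tau_{\{i\}\cup[j-1]}^\text{init}(T) = (L)_{ij}\cdot \prod_{k=1}^{j}(U)_{kk} = (L)_{ij}\cdot \tau_{[j]}^\text{init}(T)$, which rearranges to the stated formula for $(L)_{ij}$.

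I would also note the degenerate cases separately: the convention $\tau_\emptyset^\text{init}(T) = 1$ makes the $j=1$ column of $L$ and the $(1,1)$ entry of $U$ come out correctly, and the TP hypothesis guarantees all the denominators $\tau_{[j]}^\text{init}(T)$ are nonzero (indeed, Theorem~\ref{thm:wp} guarantees the leading principal minors are positive because the diagonal of $U$ is positive and $L$ is unipotent), so the ratios are well defined. The main obstacle I anticipate is the bookkeeping in the $L$-computation: carefully tracking the sign from reordering the rows $\{1,\dots,j-1,i\}$ into increasing order and verifying that the cofactor expansion of $\det(L_{S,[j]})$ really collapses to the single entry $(L)_{ij}$ rather than a larger alternating sum. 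This is where one must use that $L$'s leading principal blocks are unipotent (hence that block's minors behave rigidly) — it is essentially the classical Gauss/LU minor formula, but written in the nonstandard $\tau^\text{init}$ normalization the paper has adopted, so getting the indexing exactly right is the crux. Everything else is multiplicativity of the determinant plus the bidiagonal sparsity of $U$.
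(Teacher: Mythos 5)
Your proof is correct, and it takes a genuinely different route from the paper's. The paper proves Theorem~\ref{thm:LU-T} by induction on $n$: it writes the $(n+1)\times(n+1)$ matrix in $2\times 2$ block form with the $n\times n$ leading principal submatrix as the top-left block, invokes uniqueness of LU to inherit $L_n$, $U_n$ as principal blocks of $L_{n+1}$, $U_{n+1}$, and then solves for the new row $\mathbf{p}^T$, the new column $\mathbf{q}$, and the scalar $r$, verifying the claimed formulas via cofactor expansions down the last column of the relevant minors. Your argument instead derives the formulas in one shot from the identity
\[
\tau_S^{\text{init}}(T)=\det\bigl(T_{S,[j]}\bigr)=\det\bigl(L_{S,[j]}\bigr)\det\bigl(U_{[j],[j]}\bigr),\qquad |S|=j,
\]
which follows from $T=LU$ and the upper-triangularity of $U$ (so that columns $1,\dots,j$ of $T$ are combinations of columns $1,\dots,j$ of $L$), then specializes $S=[j]$ to read off the diagonal of $U$ and $S=\{i\}\cup[j-1]$ to read off $(L)_{ij}$. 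This is the classical minor characterization of Gaussian elimination written in the paper's $\tau^{\text{init}}$ normalization, and it is arguably more conceptual and shorter than the paper's induction; what the paper's route buys in exchange is an explicit algorithm for building the factors column-by-column, which fits the nesting-of-principal-submatrices theme the authors use elsewhere.

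Two small technical points where you can be more confident than you were. First, the step $T_{S,[j]}=L_{S,[j]}U_{[j],[j]}$ uses only that $U$ is upper triangular, not that it is bidiagonal; the bidiagonal sparsity plays no role here. Second, the bookkeeping you flagged as the potential obstacle is in fact trivial: since $i\geq j>j-1$, the index set $\{1,\dots,j-1,i\}$ is already in increasing order, so there is no row-reordering sign, and because $L$ is lower triangular the submatrix $L_{S,[j]}$ has the exact block form
\[
L_{S,[j]}=\left[\begin{array}{c|c} L_{[j-1],[j-1]} & \mathbf{0}_{j-1}\\ \hline L_{i,[j-1]} & L_{ij}\end{array}\right],
\]
which is block lower triangular with a unipotent leading block, so its determinant is literally $1\cdot L_{ij}$ with no cofactor expansion or alternating sum to track. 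With those remarks in place, your argument is complete and correct.
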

\begin{proof}[Sketch of proof:]
We provide here just a sketch of the full proof which can be found in Appendix \ref{sec:proofthm2}. The key property used is the explicit form of LU decompositions in the class of Hessenberg matrices. Take the LU decomposition of an $(n+1)\times (n+1)$ Hessenberg matrix:
$$
\left[\begin{array}{c|c}
L_{n+1}^{(n)} & \mathbf{0}_n\\
\hline
\mathbf{p}^T & 1
\end{array}\right]
\left[\begin{array}{c|c}
U_{n+1}^{(n)} & \mathbf{q}\\
\hline
\mathbf{0}_n^T & r
\end{array}\right]
=
\left[\begin{array}{c|c}
L_{n+1}^{(n)}U_{n+1}^{(n)} & L_{n+1}^{(n)}\mathbf{q}\\
\hline
\mathbf{p}^T U_{n+1}^{(n)} & \mathbf{p}^T\mathbf{q}+ r
\end{array}\right],$$
where $L_{n+1}^{(n)}$ and $U_{n+1}^{(n)}$ are both $n\times n$. This then says that respective principal submatrices of the lower and upper matrices of a Hessenberg matrix then themselves constitute an LU decomposition of the corresponding principal submatrix of the Hessenberg matrix.\\[5pt]
The proof we provide leverages this fact to prove Theorem \ref{thm:LU-T} inductively, with the induction step amounting to solving for the unknowns $\mathbf{p}$, $\mathbf{q}$ and $r$. Solving for these unknowns and recognizing the resulting conditions for Theorem \ref{thm:LU-T} to be true as cofactor expansions allows the proof to be completed.
\end{proof}

In \cite{ER23} an alternative parameterization of TPH matrices, due to Lusztig, is employed. 
This is given in terms of a further factorization of $L$ of the form 

\begin{eqnarray} \label{LusFac}
L = (1 + \alpha_1 f_{h_1}) \cdots (1 + \alpha_M f_{h_M})
\end{eqnarray}
where $L \in N^{\ge0}_-$, $M = \binom{n}2$,  $h_j \in \{1, \dots, n \}$, $\alpha_j \in \mathbb{R}_{>0}$, 1 denotes the identity matrix and $f_i$ is the elementary lower matrix with 1 in the 
$(i+1, i)$ entry and zero elsewhere. The choice and ordering of the $h_i$ is determined by a rule described in \cite{ER23}. In this way an element of TPH, with given eigenvalues, may be uniquely decomposed into a product of bidiagonal matrices. Thus the $\alpha_j$ provide an alternative parameterization of TPH. We will not make much mention of this parameterization in the present paper but illustrate here what this decomposition looks like in the case of a $3\times 3$ TPHT matrix, in terms of the coefficient parameters of the Toeplitz symbol.

Let $T$ be the following TPHT matrix
\begin{align}
    T = \left[\begin{array}{ccc}
a_1 & 1   & 0\\
a_2 & a_1 & 1 \\
a_3 & a_2 & a_1
\end{array}\right] .
\end{align}

\noindent First, decompose $T$ using Theorem \ref{thm:LU-T}:

\begin{align}
    T = \left[\begin{array}{ccc}
a_1 & 1   & 0\\
a_2 & a_1 & 1 \\
a_3 & a_2 & a_1
\end{array}\right] = \left[\begin{array}{ccc}
1 & 0 & 0\\
\frac{a_2}{a_1} & 1 & 0\\
\frac{a_3}{a_1} & \frac{a_1a_2-a_3}{a_1^2 - a_2} & 1
\end{array}\right]
\left[\begin{array}{ccc}
a_1 & 1 & 0\\
0 &  \frac{a_1^2-a_2}{a_1} & 1\\
0 & 0 & \frac{a_1^3-2a_1a_2-a_3}{a_1^2-a_2}
\end{array}\right] = LU.
\end{align}
Now, we decompose the lower piece $L$ further:
\begin{equation}
    L = \left[\begin{array}{ccc}1 & 0 & 0\\\frac{a_2}{a_1}-\frac{a_1a_2-a_3}{a_1^2-a_2}&1&0\\0&0&1\end{array}\right]
\left[\begin{array}{ccc}1 & 0 & 0\\0&1&0\\0&\frac{a_1a_2-a_3}{a_1^2-a_2}&1\end{array}\right]
\left[\begin{array}{ccc}1 & 0 & 0\\\frac{a_3(a_1^2-a_2)}{a_1(a_1a_2-a_3)}&1&0\\0&0&1\end{array}\right]
.
\end{equation}

In terms of determinants, and writing $\tau_S$ for $\tau_S^\text{init}(T)$, this decomposition takes the following form:
\begin{align}
    T&=\left[\begin{array}{ccc}
1 & 0 & 0\\
\frac{\tau_{\{2\}}\tau_{\{1,2\}}-\tau_{\{1\}}\tau_{\{1,3\}}}{\tau_{\{1\}}\tau_{\{1,2\}}} & 1 & 0\\
0 & 0 & 1
\end{array}\right]
\left[\begin{array}{ccc}
1 & 0 & 0\\
0 & 1 & 0\\
0 & \frac{\tau_{\{1,3\}}}{\tau_{\{1,2\}}} & 1
\end{array}\right]
\left[\begin{array}{ccc}
1 & 0 & 0\\
\frac{\tau_{\{3\}}\tau_{\{1,2\}}}{\tau_{\{1\}}\tau_{\{1,3\}}} & 1 & 0\\
0 & 0 & 1
\end{array}\right]\\
&\hspace{5pc} \times 
\left[\begin{array}{ccc}
\tau_{\{1\}} & 1 & 0\\
0 & \frac{\tau_{\{1,2\}}}{\tau_{\{1\}}} & 1\\
0 & 0 & \frac{\tau_{\{1,2,3\}}}{\tau_{\{1,2\}}}
\end{array}\right].
\end{align}
For further information on this, we refer the reader to 
\cite{ER23}.

\section{Grenander-Szeg\H{o} Theorem}
\label{thm:GS}
For the asymptotic spectral analysis of elements in TPHT we will use an application of the classical Grenander-Szeg\H{o} theorem. First, recall the empirical spectral distribution (ESD) of a square matrix $A \in M_n(\mathbb C)$ is given by
\begin{equation} \label{esd}
    \mu_A = \frac1n \sum_{k=1}^n \delta_{\lambda_k(A)}.
\end{equation}
This denotes a probability measure that gives equal weight (with multiplicity) to all eigenvalues of $A$. If $A$ is a random matrix, then $\mu_A$ is a random probability measure. As established in \cite{GS58}:

\begin{theorem}[\cite{GS58}]
\label{thm:GSmom}

Let $\varphi(t)$ be the symbol of a Toeplitz operator $T$  with $T_n = T_n(\varphi)$ being the 
$n\times n$ truncation of $T$ with eigenvalues $\lambda_k$. Then 
\begin{equation}
\lim_{n\to\infty}\frac1n \sum_{k=1}^n \left(\lambda_k(T_n) \right)^p = \frac1{2\pi} \int_0 ^{2\pi} (\varphi(e^{i\theta}))^p d\theta.
\end{equation}
\end{theorem}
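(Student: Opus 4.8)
The plan is to identify the $p$-th power-sum of the eigenvalues with a normalized trace and then read that trace off the banded Toeplitz structure of powers of $T$. First I would observe that, since $T_n$ is a genuine $n\times n$ matrix over $\mathbb{C}$, one has $\frac1n\sum_{k=1}^n(\lambda_k(T_n))^p = \frac1n\operatorname{tr}(T_n^p)$, the eigenvalues being counted with algebraic multiplicity exactly as in \eqref{esd}. So the assertion is equivalent to $\frac1n\operatorname{tr}(T_n^p)\to\widehat{(\varphi^p)}(0)$, where $\widehat{\psi}(0)=\frac1{2\pi}\int_0^{2\pi}\psi(e^{i\theta})\,d\theta$ denotes the zeroth Fourier coefficient; here I use that on the bi-infinite line Toeplitz (Laurent) operators multiply by multiplying symbols, so $\varphi^p$ is the symbol of $T^p$ and $\widehat{(\varphi^p)}(0)=\frac1{2\pi}\int_0^{2\pi}\varphi(e^{i\theta})^p\,d\theta$ is precisely the right-hand side.

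Next, write $(T_n)_{ij}=\widehat\varphi(j-i)$ for $1\le i,j\le n$; for the Hessenberg polynomial symbols relevant here the bi-infinite matrix $T$ is banded, i.e. $\widehat\varphi(m)=0$ whenever $|m|>w$ for some fixed bandwidth $w$. Expanding,
\[
\operatorname{tr}(T_n^p)=\sum_{1\le i_1,\dots,i_p\le n}\widehat\varphi(i_2-i_1)\,\widehat\varphi(i_3-i_2)\cdots\widehat\varphi(i_1-i_p),
\]
and I would reparameterize by the cyclic increments $d_r=i_{r+1}-i_r$, which are constrained by $d_1+\cdots+d_p=0$. Only increment vectors with every $|d_r|\le w$ contribute, so at most $(2w+1)^p$ of them, a count independent of $n$; and for each \emph{fixed} admissible $(d_1,\dots,d_p)$ the number of base points $i_1\in\{1,\dots,n\}$ for which the whole closed walk $i_1,\,i_1+d_1,\,i_1+d_1+d_2,\dots$ stays inside $\{1,\dots,n\}$ equals $n-(M-m)$, where $M$ and $m$ are the max and min of the partial sums $s_r=d_1+\cdots+d_r$; since $M-m\le 2pw$, this is $n+O(1)$ with the error depending only on $w$ and $p$. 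Hence
\[
\operatorname{tr}(T_n^p)=n\sum_{d_1+\cdots+d_p=0}\ \prod_{r=1}^{p}\widehat\varphi(d_r)\;+\;O(1)=n\,\widehat{(\varphi^p)}(0)+O(1),
\]
because $\sum_{d_1+\cdots+d_p=0}\prod_r\widehat\varphi(d_r)$ is exactly the constant Fourier coefficient of $\varphi^p$. Dividing by $n$ and letting $n\to\infty$ annihilates the $O(1)$ term and yields the claim.

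The main obstacle is precisely the boundary estimate: quantifying how the finite truncation distorts $\operatorname{tr}(T_n^p)$ away from $n$ times its bulk value. In the banded case this is the elementary count above of lattice walks that "feel" the two corners of the $n\times n$ block; equivalently one can note that $P_nT-TP_n$ has rank at most $2w$, so commuting the truncation projection $P_n$ past $T$ a bounded ($p$-dependent) number of times inside $\operatorname{tr}\big((P_nTP_n)^p\big)$ costs only $O(1)$ in trace, reducing to $\operatorname{tr}(P_nT^pP_n)=\sum_{j=1}^n(T^p)_{jj}=n\,\widehat{(\varphi^p)}(0)$ by the Toeplitz structure of $T^p$. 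For a general (non-banded) symbol one would instead approximate $\varphi$ by Laurent polynomials and pass to the limit using continuity of both sides together with the classical estimates of \cite{GS58}; but since every symbol arising in the TPHT ensemble is a Hessenberg polynomial, the banded argument above already covers every case used in this paper.
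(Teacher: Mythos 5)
The paper does not prove this statement itself; it is cited as a classical result from Grenander--Szeg\H{o} \cite{GS58}, so there is no in-paper argument to compare against. Your proof is a correct, self-contained derivation for the banded (Laurent-polynomial) symbols, which are the only ones used in the paper. The identity $\frac1n\sum_k\lambda_k(T_n)^p=\frac1n\operatorname{Tr}(T_n^p)$ is elementary, and reparameterizing the closed walks by cyclic increments $d_r$ with $\sum_r d_r=0$ correctly isolates the bulk term $n\cdot\widehat{(\varphi^p)}(0)$: for each fixed admissible increment vector the number $n-(M-m)$ of admissible base points is exact once $n\geq M-m$, the spread satisfies $M-m\leq pw$, and there are at most $(2w+1)^p$ admissible vectors, all bounds independent of $n$. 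Hence $\operatorname{Tr}(T_n^p)=n\,\widehat{(\varphi^p)}(0)+O(1)$, and dividing by $n$ gives the theorem. The alternative you sketch via the rank-$\leq 2w$ commutator $P_nT-TP_n$ is equally standard and leads to the same estimate. A bonus worth making explicit is that this count actually proves the $O(1/n)$ convergence rate, which the paper only observes numerically and verifies in a single example. Two caveats you handle but should spell out: the theorem as stated admits a general symbol, and your banded argument needs a further approximation step to reach that generality (you rightly defer this to \cite{GS58}); and the exact walk count presupposes $n$ large enough that $n-(M-m)\geq 0$ for every admissible increment vector, which holds once $n\geq pw$. Neither caveat affects the TPHT ensemble.
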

For our purposes we will take $T$ to be an $(m+1)$-banded HT operator for which the symbol will be
\[
\varphi(t) = t^{-1} (1 + a_1 t) \cdots (1 + a_m t).
\]
Then from Theorem \ref{thm:GSmom}, recast in terms of Cauchy's integral formula, one has
\begin{eqnarray} \label{pmom}
 \lim_{n \to \infty}\frac1n \sum_{k=1}^n \left(\lambda_k(T_n) \right)^p 
 &=& \frac{1}{2 \pi} \int_0^{2 \pi} (\varphi(e^{i\theta}))^p d\theta\\
 &=& \oint_{\mathbb S^1} \varphi(z)^p \frac{dz}{z} \\
 &=& \oint_{\mathbb S^1} z^{-p} 
 [(1 + a_1 z) \cdots (1 + a_m z)]^p \frac{dz}{z}\\
 &=& [z^p]((1 + a_1 z) \cdots (1 + a_m z))^p\\
 &=& [z^{p}] \left\{ \left(\sum_{j=1} ^p \binom{p}j z^j a_1^j \cdots  \sum_{j=1} ^p\binom{p}j z^j a_m^j \right) \right\}\\ \label{coeffs}
 &=&\sum_{\sum i_j = p} \binom{p}{i_1} \cdots \binom{p}{i_m} a_1^{i_1} \cdots
 a_m^{i_m} 
\end{eqnarray}
where $[z^p]$ denotes the coefficient of $z^p$ in the expression that follows.

Theorem \ref{thm:GSmom}  has an extension from matrix moments to any function $f(z)$ that is analytic on a neighborhood of the the convex hull of the essential spectrum of the associated bi-infinite Toeplitz matrix \cite{BS98}. 
\begin{theorem}[\cite{BS98}] \label{thm:BS} For $f$ an entire function,
\begin{equation}
\lim_{n\to\infty}\frac1n \sum_{k=1}^n f\left(\lambda_k(T_n(\varphi)) \right) = \frac1{2\pi} \int_0 ^{2\pi} f(\varphi(e^{i\theta})) d\theta.
\end{equation}
\end{theorem}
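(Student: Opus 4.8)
The plan is to deduce this from the polynomial case, Theorem~\ref{thm:GSmom}, by a density argument, the one substantive ingredient being a spectral containment for the truncations that is uniform in $n$. First I would record that bound. Since the relevant symbol $\varphi(t)=t^{-1}(1+a_1t)\cdots(1+a_m t)$ is a trigonometric (Laurent) polynomial, it is bounded on $\mathbb{S}^1$, so the associated bi-infinite Laurent operator $L(\varphi)$ is bounded on $\ell^2(\mathbb{Z})$ with $\|L(\varphi)\|=\|\varphi\|_{L^\infty(\mathbb{S}^1)}=:R<\infty$ (via the Fourier identification of $L(\varphi)$ with multiplication by $\varphi$ on $L^2(\mathbb{S}^1)$). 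Each truncation $T_n(\varphi)$ is a compression of $L(\varphi)$, hence $\|T_n(\varphi)\|_2\le R$ for every $n$, and therefore every eigenvalue obeys $|\lambda_k(T_n(\varphi))|\le R$; that is, $\sigma(T_n(\varphi))\subseteq\overline{D}(0,R)$ for all $n$, and the same disk contains $\varphi(\mathbb{S}^1)$. Concretely one may take $R=\prod_{\ell}(1+|a_\ell|)$ for the Hessenberg symbol of \eqref{rootedsymb}, or the cruder banded estimate $\|T_n\|_2\le\|T_n\|_1^{1/2}\|T_n\|_\infty^{1/2}\le\sum_k|\varphi_k|$.

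Next, since for each fixed $n$ the map $f\mapsto\frac1n\sum_{k=1}^n f(\lambda_k(T_n(\varphi)))$ is linear, and so is $f\mapsto\frac1{2\pi}\int_0^{2\pi}f(\varphi(e^{i\theta}))\,d\theta$, Theorem~\ref{thm:GSmom} immediately extends by linearity from the monomials $f(z)=z^p$ to all polynomials $f$. Given an entire $f$ and $\varepsilon>0$, I would then choose a Taylor polynomial $p_N$ of $f$ with $\sup_{|z|\le R}|f(z)-p_N(z)|<\varepsilon$, which is possible because the Taylor series of an entire function converges uniformly on $\overline{D}(0,R)$. Using the uniform containment of the first step, for every $n$
\[
\left|\frac1n\sum_{k=1}^n f(\lambda_k(T_n(\varphi)))-\frac1n\sum_{k=1}^n p_N(\lambda_k(T_n(\varphi)))\right|\le\varepsilon ,
\]
and likewise $\bigl|\frac1{2\pi}\int_0^{2\pi}f(\varphi(e^{i\theta}))\,d\theta-\frac1{2\pi}\int_0^{2\pi}p_N(\varphi(e^{i\theta}))\,d\theta\bigr|\le\varepsilon$ because $\varphi(e^{i\theta})\in\overline{D}(0,R)$. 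By the polynomial case just established, $\frac1n\sum_{k=1}^n p_N(\lambda_k(T_n(\varphi)))\to\frac1{2\pi}\int_0^{2\pi}p_N(\varphi(e^{i\theta}))\,d\theta$ as $n\to\infty$. A three-$\varepsilon$ triangle inequality then yields $\limsup_{n\to\infty}\bigl|\frac1n\sum_{k=1}^n f(\lambda_k(T_n(\varphi)))-\frac1{2\pi}\int_0^{2\pi}f(\varphi(e^{i\theta}))\,d\theta\bigr|\le 2\varepsilon$, and letting $\varepsilon\downarrow0$ completes the proof.

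The only real obstacle is the uniform spectral bound of the first step; everything after it is the routine approximation argument above. For the Laurent-polynomial symbols relevant here this bound is elementary, and it is precisely what makes the hypothesis ``$f$ entire'' convenient: a single radius $R$ serves all $n$ simultaneously. If one instead only assumes $f$ analytic on a neighborhood of the convex hull of the essential spectrum of the bi-infinite operator — the more general form in the surrounding discussion — one additionally needs the Schmidt--Spitzer localization of $\overline{\bigcup_n\sigma(T_n(\varphi))}$ near that hull, the deeper input carried by \cite{BS98}; for the applications in this paper the entire-function version above is all that is required.
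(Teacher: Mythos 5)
Your argument is correct, and it is worth noting up front that the paper does not supply a proof of \Cref{thm:BS} at all: it is quoted as a black-box result from \cite{BS98}, with only the surrounding remark that the moment theorem ``has an extension'' to functions analytic on a neighborhood of the convex hull of the essential spectrum. So there is no internal proof to compare against; what you have produced is a self-contained derivation of the version actually used in the paper.

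Your two ingredients are both sound. The uniform spectral containment $\sigma(T_n(\varphi))\subseteq\overline{D}(0,R)$ with $R=\|\varphi\|_{L^\infty(\mathbb{S}^1)}$ follows exactly as you say: $T_n(\varphi)$ is a compression of the bounded Laurent multiplication operator, compressions do not increase operator norm, and the spectral radius of any matrix is bounded by its operator norm (this last point matters since $T_n$ is not normal in the cases of interest). The same $R$ trivially dominates $\sup_\theta|\varphi(e^{i\theta})|$, so one disk serves both sides. From there, linearity extends \Cref{thm:GSmom} from monomials to polynomials, uniform convergence of the Taylor polynomials of an entire $f$ on $\overline{D}(0,R)$ gives the approximation step, and the three-term triangle inequality closes the argument. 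Your closing remark is also well taken: the entire-function hypothesis is precisely what lets the crude norm bound replace the Schmidt--Spitzer-type localization of $\bigcup_n\sigma(T_n)$ that \cite{BS98} needs for the sharper ``analytic on a neighborhood of the convex hull of the essential range'' version. The only caveat worth recording is that the whole reduction rests on \Cref{thm:GSmom} holding for the non-Hermitian banded symbols under consideration; the paper takes this for granted (and it is true for banded Toeplitz, since $\tfrac1n\operatorname{Tr}(T_n^p)$ stabilizes up to $O(1/n)$ boundary corrections), but it is a hypothesis you are inheriting rather than proving.
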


Using \Cref{thm:BS} one may study asymptotic limits of moment generating functions or characteristic functions. For instance, consider the discrete measure, from \eqref{esd}, given by 
\[
d\mu_n = \frac1n \sum_{j=1}^n \delta_{\lambda_j^{(n)}}(s). 
\]
Then the moment generating function of this measure is given by
\[
\int e^{ts} d \mu_n = \frac1n \sum_{j=1}^n \exp \left[ t  \left(\lambda_j(T_n) \right) \right],
\]
which in the large $n$ limit approaches
\[
F(t) = \frac1{2\pi i } \oint e^{t \varphi(z)}\frac{dz}{z}.
\]
If $F(t)$ is in the domain of the inverse Laplace transform then one may use this to try to recover an asymptotic density {for the sequence $d\mu_n$}. We show how this goes for a special case in the next sub-section. But in general establishing that this transform exists may prove challenging.  Nevertheless, in principal one may use the moment calculations of Theorem \ref{thm:GSmom} to explicitly calculate Taylor series approximations of $F(t)$. In Section \ref{numspec} we will study the form of these moments in a random setting. 
We now consider some explicit examples.

\subsection{TPHT using $(\textbf 1_m,n)$}
\label{subsec:1mn}
Recall the coefficients in the symbol are formed using the elementary symmetric polynomials for the input parameters. Using the vector of all 1s, $\textbf 1_m$, it follows then the coefficients take the  form $x_k(\textbf 1_m) = \binom{m}k$. Hence $T_n = (\textbf 1_m,n)$ has diagonals consisting of the binomial coefficients. \vspace{1pc}

For $\widehat \varphi(k-1) = \binom{m}{k}$ for $k=0,\ldots,m$, then $T_n$ has {associated} symbol $$
\varphi(z) = z^{-1} \sum_{k=0}^m \binom{m}k z^k = z^{-1}(z+1)^m.
$$

\subsubsection{TPHT using $(\textbf 1_2,n)$}
\label{subsec: 1_2}
Let $m = 2$. Then $T_n = (\textbf 1_2,n)$ is tridiagonal and {Hermitian}, and hence is positive definite. For example, taking $n = 2,5$ yield the corresponding matrices
$$
\begin{bmatrix}
2 & 1\\1&2
\end{bmatrix} \qquad \mbox{and}  \qquad
\begin{bmatrix}
2 & 1 & 0  & 0&0\\1&2&1&  0&0\\0&1&2&1&0\\0&0&1&2&1\\0&0&0&1&2
\end{bmatrix}.$$
$T_n$ has associated symbol $\varphi(z) = z^{-1}(z+1)^2 = 2 + z + z^{-1}$. On $\mathbb S^1$, $\varphi(z)=2 + z + \bar z = 2\cdot (1 + \operatorname{Re}(z))$ is {real-valued}.

We next consider applications of Theorems \ref{thm:GSmom} and \ref{thm:BS} for this ensemble. For $f(z) = z$, then 
\begin{align*}
2 &= \frac1n \operatorname{Tr} T_n = \frac1{2\pi} \int_0^{2\pi} \varphi(e^{i\theta}) \ d\theta 
=\frac1{2\pi} \int_0^{2\pi}2 \cdot (1 + \cos \theta) \ d\theta
\end{align*}
holds for all $n$. For $f(z) = z^p$, then we have the asymptotic result 
\begin{align}\label{eq:1_2 p}
\lim_{n\to\infty} \frac1n \sum_{j=1}^n \lambda_j(T_n)^p = \frac1{2\pi} \int_{0}^{2\pi} f(\varphi(e^{i\theta})) \ d\theta = \binom{2p}p
\end{align}
from \Cref{thm:GSmom}. To compare both sides in \eqref{eq:1_2 p} for $p = 3$, where $\binom{2p}p = 20$, we compute explicit values for $n = 10^k$ for $k = 2,3,4$:
\begin{itemize}
    \item For $n=100$, then $\frac1n \sum_{j=1}^n \lambda_j(T_n)^3 = 19.88 $

    \item For $n=1000$, then $\frac1n \sum_{j=1}^n \lambda_j(T_n)^3 = 19.988 $ 

    \item For $n=10,000$, then $\frac1n \sum_{j=1}^n \lambda_j(T_n)^3 = 19.9988$
\end{itemize}
{These trials suggest the convergence rate from \Cref{thm:GSmom} is $O(\frac1n)$, which can easily be verified for this explicit case (e.g., $T_n^3$ has fixed diagonal except only for its first and last two entries).}

A similar application then for Theorem \ref{thm:BS} using the entire function $f(z) = e^z \in C(\mathbb R)$ yields
$$\lim_{n\to\infty} \frac1n \sum_{j=1}^n f(\lambda_j(T_n)) = \frac1{2\pi} \int_0^{2\pi} f(\varphi(e^{i\theta})) \ d\theta = e^2 I_0(2) \approx 16.84398.$$ ($I_n(z)$ is the modified Bessel function of the first kind.) Now comparing this to computed explicit values for $n = 10^k$ for $k = 2,3,4$:
\begin{itemize}
    \item For $n=100$, then $\frac1n \sum_{j=1}^n f(\lambda_j(T_n)) \approx 16.7344 $

    \item For $n=1000$, then $\frac1n \sum_{j=1}^n f(\lambda_j(T_n)) \approx 16.8330 $

    \item For $n=10,000$, then $\frac1n \sum_{j=1}^n f(\lambda_j(T_n)) \approx 16.8429 $ 
\end{itemize}
{These similarly suggest a fast convergence slower than the fixed {$p^{th}$} moment case.}
\begin{figure}[t] 
\centering
    \includegraphics[width=0.6\textwidth]{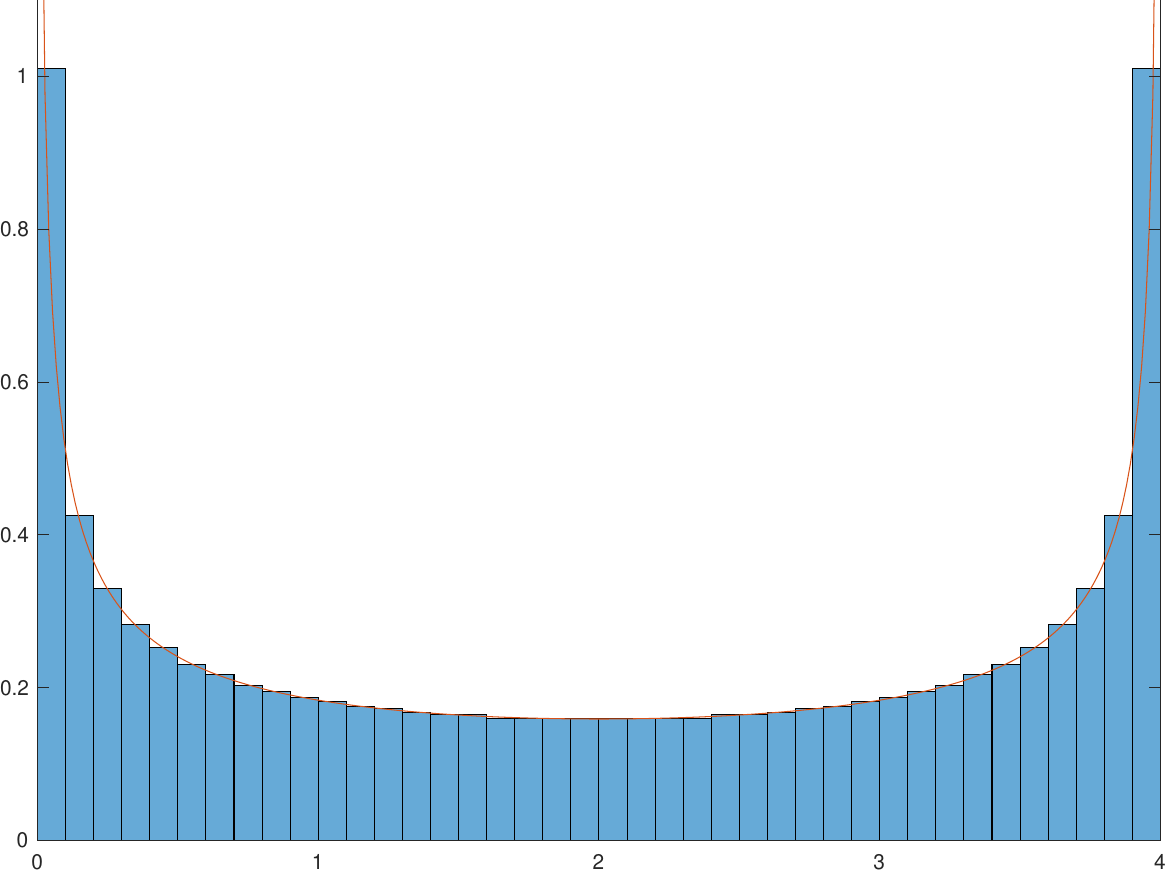}%
\caption{Histogram of (exact) eigenvalues of $(\textbf 1_2,n)$ for $n=4000$ mapped against $f(t)=(1/\pi)/\sqrt{t(4-t)}$ (in red).}
\label{fig:m=2 eigs}%
\end{figure}

 From Theorem \ref{thm:BS}, we can fully realize the ESD of the $n = 2$ case via the push-forward of the uniform map through the symbol, as seen in \Cref{fig:m=2 eigs}. 
This is possible as a consequence of the corresponding weak limit asymptotic result from Theorem \ref{thm:BS}, since the Toeplitz operator in this case is Hermitian (and positive definite). For us this only applies when $m=2$ for $(\textbf 1_m,n)$, as seen starting for $m \ge 3$ then $(\textbf 1_m,n)$ is no longer Hermitian.



\subsubsection{TPHT using $(\textbf 1_3,n)$}
For fixed $m\ge 3$, then $T_n = (\textbf 1_m, n)$ has the associated symbol $\varphi(z) = z^{-1}(z + 1)^m$ (which is \textit{not} real-valued on $\mathbb S^1$) so that 
$$
\varphi(z)^p \cdot \frac1z = z^{-p-1}(z+1)^{mp} = \sum_{k=0}^{mp} \binom{mp}k z^{k-p-1}.
$$
By \eqref{pmom},
\begin{equation}\label{eq:pth moment all 1s m-inputs}
    \lim_{n\to\infty} \frac1n \sum_{j=1}^n \lambda_j(T_n)^p = \frac1{2\pi i}\int_{\mathbb S^1} \varphi(z)^p \ \frac{dz}z = \binom{mp}p.
\end{equation}
For instance, for $m=p = 3$, then $\binom{mp}p = 84$. Comparing this to fixed computed values for again $n = 10^k$ for $k = 2,3,4$:



\begin{itemize}
    \item For $n=100$, then $\frac1n \sum_{j=1}^n \lambda_j(T_n)^3 = 83.4 $

    \item For $n=1000$, then $\frac1n \sum_{j=1}^n \lambda_j(T_n)^3 = 83.94 $

    \item For $n=10,000$, then $\frac1n \sum_{j=1}^n \lambda_j(T_n)^3 = 83.994 $  
\end{itemize}
Now for entire $f(z) = e^z$, then for $m=3$, applying Theorem \ref{thm:BS}  yields

\begin{align}\label{eq: gen 1m}
\frac1{2\pi} \int_0^{2\pi} f(\varphi(e^{i\theta})) \ d\theta  &= \sum_{k \ge 0} \frac{\binom{mk}k}{k!} \approx 169.249
{=}\lim_{n\to\infty} \frac1n \sum_{j=1}^n f(\lambda_j(T_n)).
\end{align}
This again similarly compares for computed values with fixed $n = 10^k$ for $k = 2,3,4$:
\begin{itemize}
    \item For $n=100$, then $\frac1n \sum_{j=1}^n f(\lambda_j(T_n)) \approx  166.85865$

    \item For $n=1000$, then $\frac1n \sum_{j=1}^n f(\lambda_j(T_n)) \approx  169.01002$

    \item For $n=10,000$, then $\frac1n \sum_{j=1}^n f(\lambda_j(T_n)) \approx  169.22516$
\end{itemize}


\begin{remark}
    {A followup study {might examine }explicit properties for the associated limiting spectral measure associated with $(\textbf 1_m,n)$. This can include deriving a closed form for the limiting spectral measure, as realized through an associated Hypergeometric function for the associated Laplace transform along with deriving explicit convergence rates, which the preceding examples indicate to be $O(\frac1n)$ for the fixed moment cases.}
\end{remark}

\section{Spectral theory of TPH exemplified through TPHT} \label{numspec}
The results up to this point show that the spectrum of an element of TPHT coincides with that for  all elements in the isospectral class $\mathcal{O}_\Lambda$ of TPH containing that Toeplitz element. Thus spectral results about a TPHT element hold for the full class $\mathcal{O}_\Lambda$ it represents.   (We remind that while the eigenvalues are the same, the eigenvectors within this class change while still preserving the general oscillation properties described in Section \ref{TPH}.)

\subsection{Numerics: Motivation}

We will consider the asymptotic spectral properties of TPHT matrices with random symbols. The motivation for this comes from a number of directions. For the first of these we recall that
in \cite{DE02}, Dumitriu and Edelman 
showed that the GOE($n$) random matrix ensemble ($n \times n$ symmetric matrices with independent normal entries modulo the symmetry) has a Householder tridiagonalization whose  entries are independent with distributions
\begin{eqnarray*}
\alpha_j \sim  \mathcal{N}(0,2) \qquad \mbox{and} \qquad \beta_j  \sim \chi^2_{n-j},
\end{eqnarray*}
where $\alpha_j$ are the diagonal entries of the tridiagonalization and  the symmetric entries just above and below the diagonal are distributed as $\sqrt{\beta_j}$. 
They further showed that this induces, on the eigenvalues of $X \sim \operatorname{GOE}(n)$, a joint probability distribution of the form (up to a normalization)
\begin{eqnarray} \label{DE}
P(\lambda_1, \dots,\lambda_n) = \ \exp\left( -\frac14 \sum_{k=1}^n \lambda_k^2 \right) \prod_{i < j} |\lambda_j -\lambda_i| .
\end{eqnarray}
It is a straightforward exercise to check that this distribution is the Radon-Nikodym derivative of  the invariant measure associated to the Hamiltonian dynamics of the classical Toda Lattice. This observation has led to a growth of focus on the Toda lattice as a model for hydrodynamic limits of more general lattice systems (e.g., \cite{RRV11,Spohn21}). 
The ensemble studied in this paper provides 
a basis for studying  generalized hydrodynamics for the integrable full Toda  lattice systems (cf. \cite{ER23}). 

In another direction, Freeman Dyson introduced a matrix model related to the GUE($n$) random matrix ensemble  ($n \times n$ Hermitian matrices with independent complex normal entries, modulo the Hermitian symmetry) \cite{Dyson62}. His generalization amounted to replacing the normal entries by Brownian motions. The resulting ensemble/process is nowadays referred to as Dyson Brownian motion (DBM) with principal interest being in the process it induces on eigenvalues.  This is described by the following system of stochastic ODE's.
\begin{eqnarray} \label{DBMsde}
d \lambda_k(t) = dB_k + \sum_{j \ne k} \frac1{\lambda_k(t) - \lambda_j(t)} dt, \,\,\,\, k = 1, \dots, n.
\end{eqnarray}
DBM has been intensively studied over the past few decades. If the eigenvalues are initially ordered as $ \lambda_1 > \cdots > \lambda_n$, then, with probability 1, this ordering will be maintained under the evolution. One celebrated result is that the 
distribution of the largest eigenvalue, $\lambda_1$, limits as $n \to \infty$ to the Tracy-Widom distribution,  
which is built from a particular solution of the Painlev\'e II equation \cite{TW01}. A connection for all of this with Toda 
was found by  O'Connell, 
who showed, effectively, that (\ref{DBMsde}) also arises as the zero temperature 
limit of a Markovian stochastic process whose infinitesimal generator comes from the quantum Toda lattice \cite{OConnell12}. The stochastic process considered by O'Connell is naturally expressed in terms of TPH matrices with random entries that are of log-normal type. For that reason we will consider here cases where the coefficients $a_i$ in the symbol $\varphi$ are independent, positive random variables.   We will, in particular, consider the case where the $a_i$ are independent log-normal. 
\subsection{Numerics with log-normal symbol coefficients}

We begin with the general case where the coefficients of the Toeplitz symbol, $a_i$, are independent random variables.
For such random symbols, it is of interest to compute distributional properties for the associated $p$-part of the random symbol coefficients.
{As was seen in \eqref{pmom}-\eqref{coeffs} this should correspond to the asymptotic 
limit for the $p^{th}$ moment of the ESD for the Toeplitz matrices associated to the random symbol.}
For example,  the first moment computations for the $p$-part of the symbol 
$\varphi(t)$ has mean $\mu_p^{(m)}$ given by
\begin{align}
  \mu_p^{(m)} &= \mathbb{E}\left(\sum_{\sum i_j = p} {p \choose i_1} \cdots {p \choose i_m} a_1^{i_1} \cdots
 a_m^{i_m}  \right). 
 = \sum_{\sum i_j = p} {p \choose i_1} \cdots {p \choose i_m} \mathbb{E}(a_1^{i_1}) \cdots
 \mathbb{E}(a_m^{i_m})
 \nonumber
 \end{align}

As already mentioned, we will primarily specialize to the case where the $a_i$ are log-normal. Recall that a random variable $X$ is called log-normal if 
$\log X$ is normally distributed. The density function is 
\[
\frac{1}{x \sigma \sqrt{2 \pi}} \exp{\frac{(\log x - \mu)^2}{2 \sigma^2}} 
\]
where $\mu$ and $\sigma^2$ are the mean and variance of the underlying normal distribution.

We will restrict attention to the case where $\mu = 0$ for all the random variables considered but the $\sigma$
may vary from one random variable to another.  
It follows that the terms,  $a_1^{i_1} \cdots a_m^{i_m}$, in the summand in
\eqref{coeffs} are log-normal with underlying parameters $\mu = 0$ and
$\sigma^2 = \sum_{j=1}^m i_j^2 \sigma_j^2$ where $\sigma^2_j$  is the underlying variance of $a_j$; however,  \eqref{coeffs} is then a linear combination of (dependent) log-normals but is not itself log-normal.\\

More precisely, in the case where the $a_i$ are independent log-normals with underlying parameters $\mu = 0$ and variance $\sigma_i^2$, one has
 \begin{align}
 \mu_p^{(m)}&= \sum_{\sum i_j = p} {p \choose i_1} \cdots {p \choose i_m} \exp\left( \frac{i_1^2 \sigma_1^2}{2}\right) \cdots \exp\left( \frac{i_m^2 \sigma_m^2}{2}\right)\\ 
 &=\sum_{\sum i_j = p} {p \choose i_1} \cdots {p \choose i_m} \exp\left( \frac{\sum_{j=1}^m i_j^2 \sigma_j^2}{2}\right).
\end{align}
In the independent and identically distributed (iid) log-normal case, where all $\sigma_i = \sigma$ are equal, this reduces to
\[ 
\mu_p^{(m)} = \sum_{\sum i_j = p} {p \choose i_1} \cdots {p \choose i_m}  \exp\left( \frac{\sigma^2 \sum_{j=1}^m i_j^2}{2}\right). 
\]
Using standard norm equivalence relations for $\sum_{j=1}^m i_j = p$, we have the inequalities
\[
\frac{p^2}m =\frac1m \left(\sum_{j=1}^m i_j \right)^2 \le  \sum_{j=1}^m i_j^2 \le \left(\sum_{j=1}^m i_j \right)^2 = p^2
\]
Using also the fact
\begin{equation}\label{eq:all1s moment}
    \binom{mp}{p} = \sum_{\sum i_j = p} {p \choose i_1} \cdots {p \choose i_m},
\end{equation}
(this follows from a trivial combinatorial argument of counting the number of ways of placing $p$ balls into $mp$ bins in two ways) which gives the explicit expected $p^{th}$ part coefficient for the right hand side (RHS) of the ESD  limit of $(\boldsymbol{1}_m,n)$ (see  \eqref{eq:pth moment all 1s m-inputs}), we have
\begin{equation}\label{eq:bounds}
    \binom{mp}{p} \exp\left(\frac{\sigma^2p^2}{2m}\right) \le \mu_p^{(m)} \le \binom{mp}{p} \exp\left(\frac{\sigma^2p^2}{2}\right).
\end{equation}
These comprise the upper and lower bounds found in Figures \ref{fig:rand moments diff 3-5} to \ref{fig:rand moments diff 10-20} using $\sigma = 1$ {(note the logarithmic scaling then skews the location of the mean relative to the median)}; the other (even lower) bound shown is the associated limiting spectral moment for $(\textbf 1_m,n)$ from \eqref{eq:pth moment all 1s m-inputs} and \eqref{eq:all1s moment}, which appears to be a good estimator for the sample median.

Note the above lower bound could also be achieved using a Lagrange multiplier method to minimize the objective function $f(i_1,\ldots,i_m) = \sum_{j=1}^m i_j^2$ given the constraint $\sum_{j=1}^m i_j = p$. A similar computation now using different $\sigma_1,\ldots,\sigma_m$ (so a new objective function $f(i_1,\ldots,i_m) = \sum_{j=1}^m i_j^2\sigma_j^2$) yields the lower bounds
\begin{equation}
    \binom{mp}{p}  \exp\left(\frac{p^2}{2 \sum_{j=1}^m 1/\sigma_j^{2}}\right) \ge \binom{mp}{p} \exp\left(\frac{p^2 \min_i \sigma_i^2}{2m} \right), 
\end{equation}
while a trivial upper bound holds with
\begin{equation}
    \binom{mp}p \exp\left(\frac{p^2 \max_i \sigma_i^2}2 \right).
\end{equation}
(It is straightforward to also calculate the variance of these $p$-part random symbol coefficients, but we will  not make use of that here.)
\medskip

 
 \subsubsection{Numerical experiments}
 We now turn to the numerical simulation of the $p^{th}$ moments of the spectrum of $(\textbf a_m, n)$ for large $n$ and where $\textbf a_m \in \mathbb R^m$ with iid standard log-normal components $a_i$. We match both the large $n$ matrix values from the left hand side (LHS) of Theorem \ref{thm:GSmom} against the asymptotic distributions  that are given by the RHS. We also sample the case where the $a_i$ are iid exponential distributions with unit mean. All experiments are run in MATLAB with double precision (i.e., machine precision using $\varepsilon_{\operatorname{machine}} = 2^{-52} \approx 2.2 \cdot 10^{-16}$). 

 For our experiments, we run 100,000 samples of both the LHS and RHS distributions associated with the $p^{th}$ random symbol coefficients for $(\textbf a_m, n)$, where the vector $\textbf a_m$ are generated using built-in MATLAB functions to generate normal and exponential vectors (e.g., \texttt{exp(randn(3,1))} is a standard log-normal vector in $\mathbb R^3$). To ease the following discussion, we focus our experiments on using only $m = 3,10$ input parameters and moments $p = 3,20$, as we feel these are representative of performance with other fixed combinations. 
 
 To sample the LHS matrix ensemble, we compute $\frac1n \operatorname{Tr}(A^p)$ for iid $n\times n$ $A \sim (\textbf a_m,n)$, where $A$ is formed using custom MATLAB code that generates a Toeplitz Hessenberg banded matrix whose input diagonals are the elementary symmetric polynomials associated with the random symbol vector $\mathbf a_m$. This follows since $\frac1n \operatorname{Tr}(A^p) = \frac1n \sum_{k=1}^n \lambda_k(A)^p$. To simplify discussion, we fix $n = 100$ for our experiments; empirically, the convergence in Theorem \ref{thm:GSmom} is generally fast (cf. discussion below that compares LHS and RHS empirical cumulative distribution functions (CDFs) for $n = 10$), so choosing $n = 100$ is sufficient for comparisons.

 For each sample of the RHS asymptotic $p^{th}$ moment, we form a $(p+1)\times (p+1)$ matrix  that is generated by using the symbol generated with $a_i$ iid from a prescribed distribution but now interpreted as a matrix equation, replacing $t$ with $\epsilon^T = \sum_{i=1}^{p} \textbf E_{i,i+1}$, where now the $p^{th}$ part of the symbol then aligns with the $p^{th}$ lower diagonal. For example, we form $B = \prod_{i=1}^n (\textbf I + a_i \epsilon) \in M_{p+1}(\mathbb R)$, and then store only $B_{p+1,1}$ for each sample.

 \begin{remark}\label{rmk: 2 methods}
    There is a choice on how to sample the right-hand side of Theorem \ref{thm:GSmom} in comparison to the LHS. Forming $B$ as outlined above, one could simultaneously sample both the LHS and RHS samples using the same generated $a_i$ values, or each side can be sampled independently. Figures \ref{fig:rand moments diff 3-5} to \ref{fig:rand moments diff 10-20} choose independent samples for each side.
\end{remark}

 Figures \ref{fig:rand moments diff 3-5} to \ref{fig:rand moments diff 10-20} show the summary histogram output for the 100,000 samples on a logarithmic scale for each combination of $m = 3,10$ (inputs), $p = 5,20$ (moments) and $a_i$ iid (standard log-normal versus exponential with unit mean). For comparison for each model, the associated asymptotic $p^{th}$ moment for $(\textbf 1_m,n)$, i.e., $\binom{mp}p$, is shown with a vertical yellow line. Also included are both  bounds from \eqref{eq:bounds} (i.e., $\binom{mp}p \exp\left(\frac{p^2}{2m}\right)$ and $\binom{mp}p \exp\left(\frac{p^2}{2}\right)$), which contain the mean in the iid standard log-normal case; the upper bound is omitted in the $p = 20$ cases.
\begin{figure}[htp] 
\centering
    \subfloat[Standard log-normal]{%
        \includegraphics[width=0.45\textwidth]{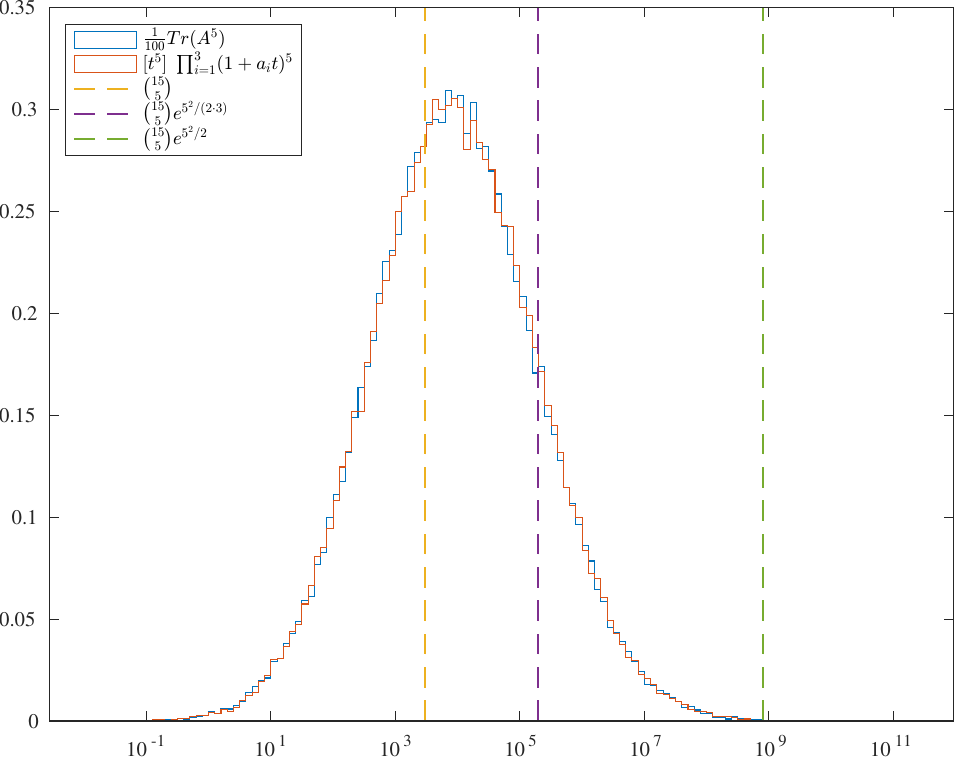}%
        \label{fig:log-normal 3-5}
        }%
    \subfloat[Exponential]{%
        \includegraphics[width=0.45\textwidth]{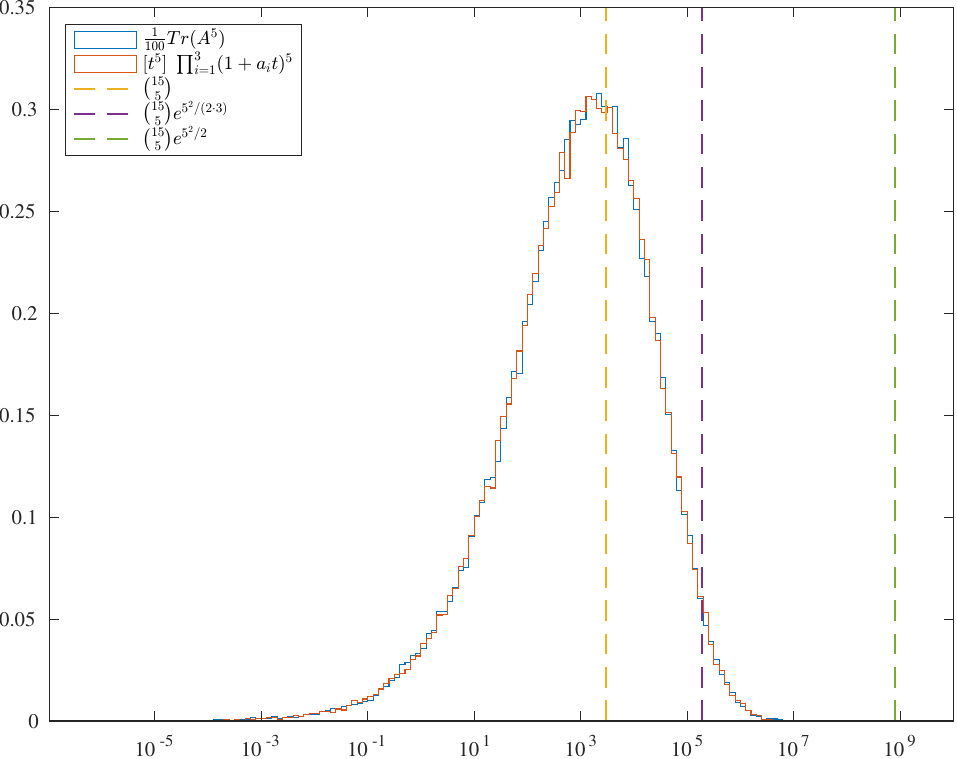}%
        \label{fig:exp 3-5}
        }%
\caption{Computed $5^{th}$ moment for ESD for $A \sim (\textbf a_3,100)$, where $\textbf a_3 \in \mathbb R^{3}$ has iid (a) standard log-normal or (b) exponential with unit mean components, using 100,000 samples}
\label{fig:rand moments diff 3-5}%
\end{figure}
\begin{figure}[htp] 
\centering
    \subfloat[Standard log-normal]{%
        \includegraphics[width=0.45\textwidth]{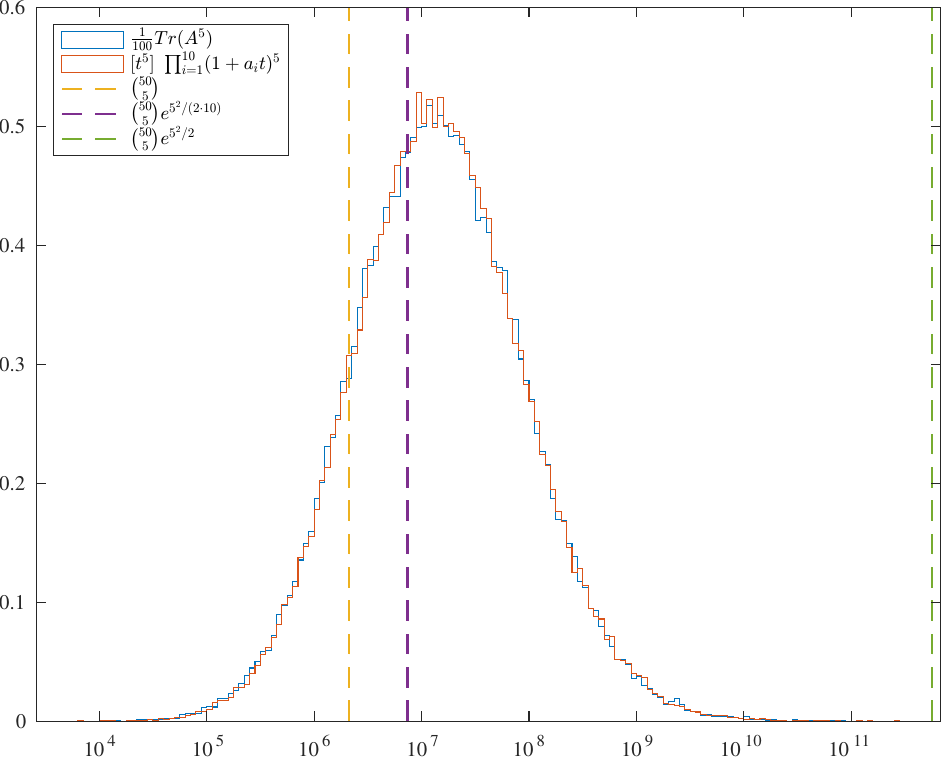}%
        \label{fig:lognorml 10-5}
        }%
    \subfloat[Exponential]{%
        \includegraphics[width=0.45\textwidth]{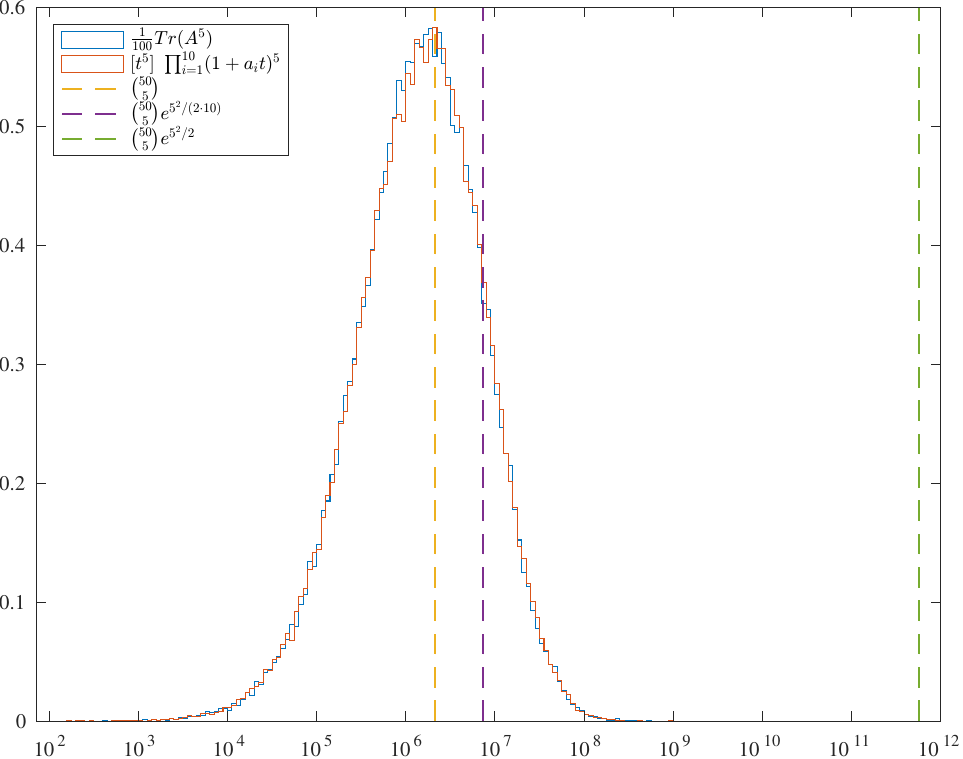}%
        \label{fig:exp 10-5}
        }%
\caption{Computed $5^{th}$ moment for ESD for $A \sim (\textbf a_{10},100)$, where $\textbf a_{10} \in \mathbb R^{10}$ has iid (a) standard log-normal or (b) exponential with unit mean components, using 100,000 samples}
\label{fig:rand moments diff 10-5}%
\end{figure}
\begin{figure}[htp] 
\centering
    \subfloat[Standard log-normal]{%
        \includegraphics[width=0.45\textwidth]{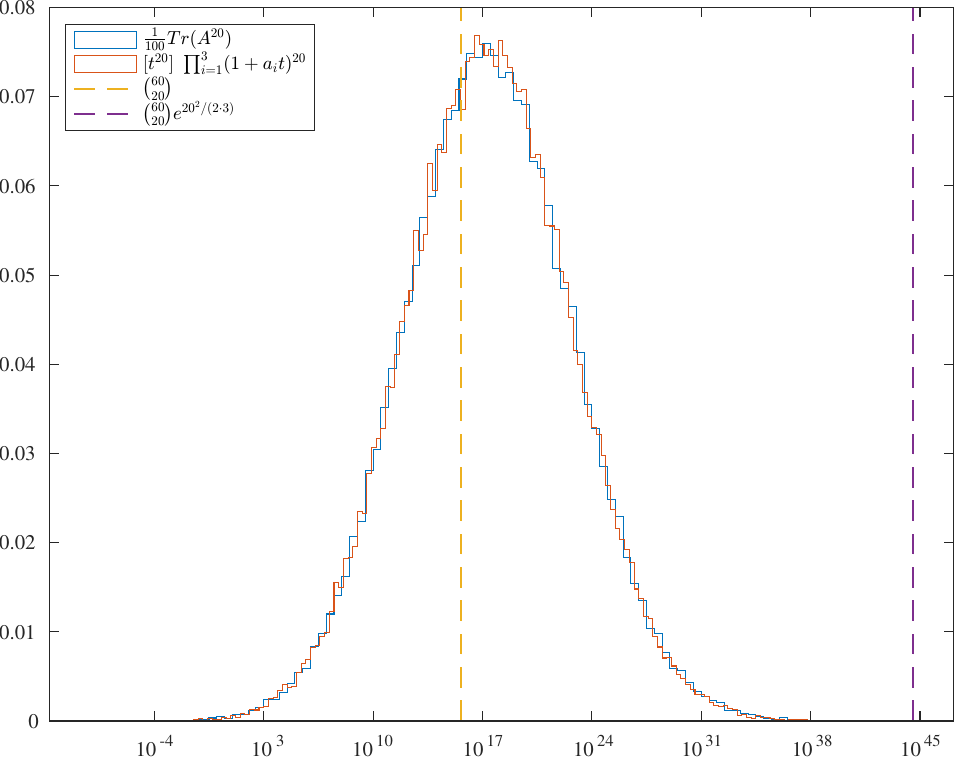}%
        \label{fig:log-normal 3-20}
        }%
    \subfloat[Exponential]{%
        \includegraphics[width=0.45\textwidth]{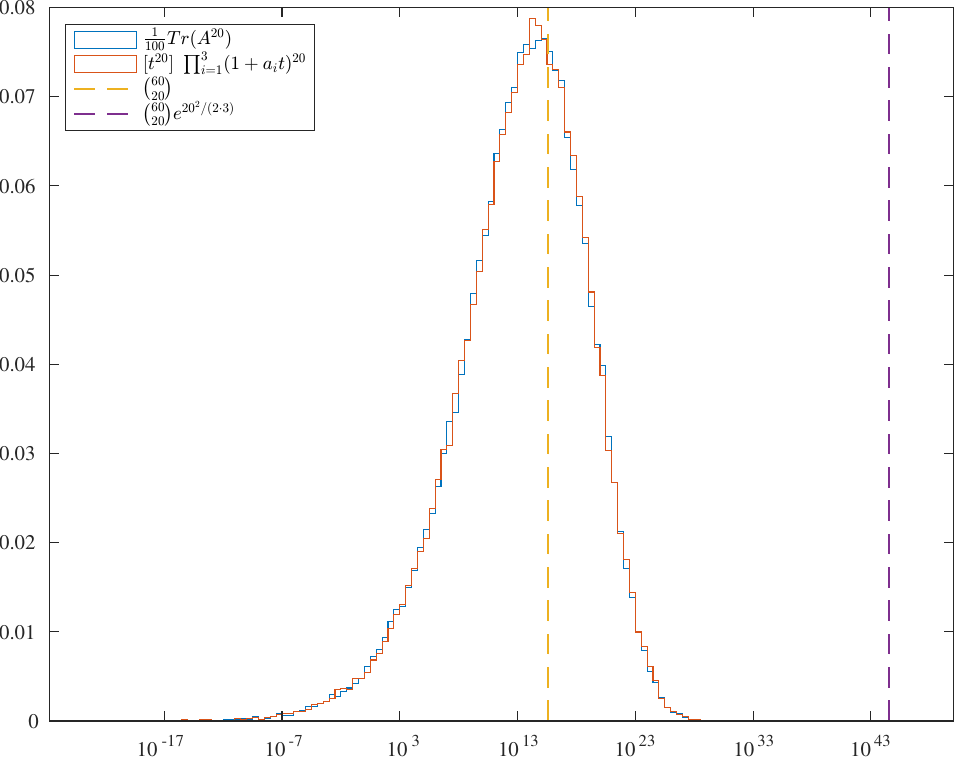}%
        \label{fig:exp 3-20}
        }%
\caption{Computed $20^{th}$ moment for ESD for $A \sim (\textbf a_{3},100)$, where $\textbf a_{3} \in \mathbb R^{3}$ has iid (a) standard log-normal or (b) exponential with unit mean components, using 100,000 samples}
\label{fig:rand moments diff 3-20}%
\end{figure}
\begin{figure}[htp] 
\centering
    \subfloat[Standard log-normal]{%
        \includegraphics[width=0.45\textwidth]{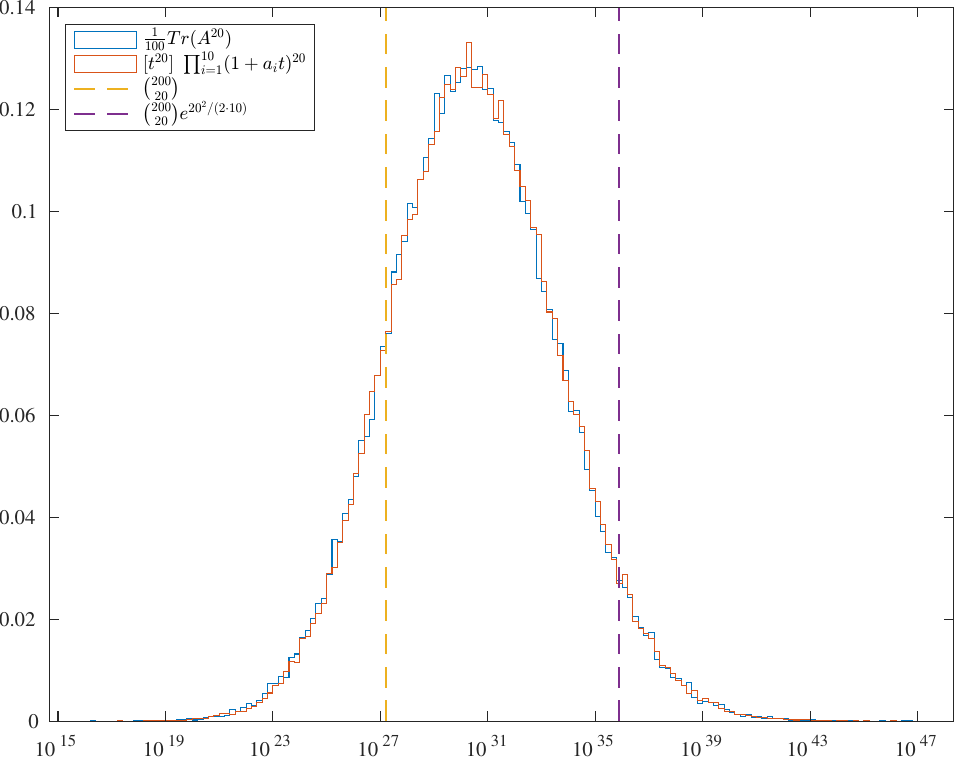}%
        \label{fig:log-normal 10-20}
        }%
    \subfloat[Exponential]{%
        \includegraphics[width=0.45\textwidth]{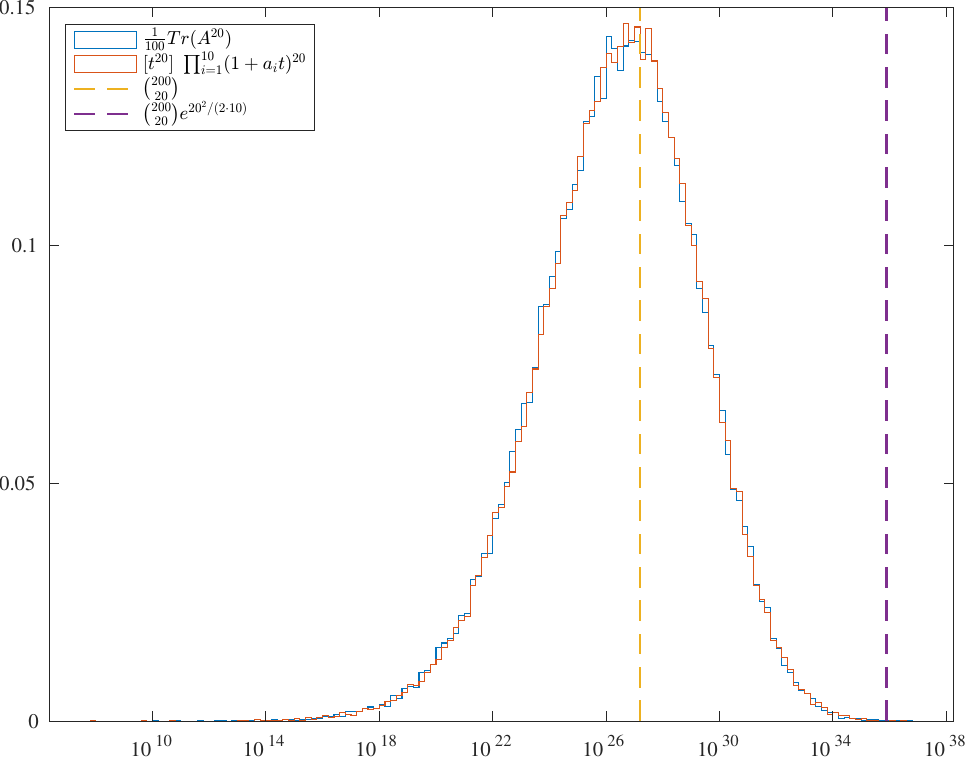}%
        \label{fig:exp 10-20}
        }%
\caption{Computed $20^{th}$ moment for ESD for $A \sim (\textbf a_{10},100)$, where $(\textbf a_{10} \in \mathbb R^{10}$ has iid (a) standard log-normal or (b) exponential with unit mean components, using 100,000 samples}
\label{fig:rand moments diff 10-20}%
\end{figure}


One quick take away for the standard log-normal maps from \Cref{fig:rand moments diff 3-5,fig:rand moments diff 10-5,fig:rand moments diff 3-20,fig:rand moments diff 10-20} is the associated $p^{th}$ moments still exhibit log-normal behavior, as seen by a near normal curve using a logarithmic scaling. For comparison, the associated exponential maps show skewed behavior on the same scaling, which tend to also approach log-normal behavior for larger moments and number of input parameters (cf. \Cref{fig:exp 10-20}). This suggests that even though no models are equal in distribution to a log-normal (even in the standard log-normal setup, then the associated $p^{th}$ moments are linear combinations of dependent log-normals), a universal behavior seems to limit toward a log-normal scheme. A followup study could focus on expanding these empirical findings.

For the $(\textbf 1_m,n)$ moment used for comparison in \Cref{fig:rand moments diff 3-5,fig:rand moments diff 10-5,fig:rand moments diff 3-20,fig:rand moments diff 10-20}, the $o(1)$ terms from the statement of Theorem \ref{thm:GSmom} can be explicitly realized as $O(\frac1n)$, as discussed previously. For example, for $A$ one of the $100\times100$ matrices (out of $10^5$ total) used to generate the exponential LHS picture in \Cref{fig:exp 3-5}, then $A^3$ has diagonal entries 1.1635 for indices 3 to 98, 1.1342 for indices 2 and 99, and 0.7202 for indices 1 and 100. Standard tools and nonparametric statistical tests  can be used to compare the distributional properties for the LHS and RHS. For instance, the Kolmogorov-Smirnov (KS) distance between each empirical CDF (i.e., supnorm distance between both empirical CDFs) is a standard comparison tool for two  distributions. For reference, we can consider 100,000 samples for each corresponding side of the $5^{th}$ moment with 3-inputs and $100\times100$ matrices with iid standard log-normal entries using both potential sampling methods (cf. \Cref{rmk: 2 methods}). When comparing the LHS to the simultaneously sampled RHS random symbol, then the KS distance is 0.00185; when comparing the LHS to the independently sampled RHS random symbol (as in \Cref{fig:log-normal 3-5}), then the KS distance is 0.00263. So these are very good matches already for $n = 100$. If doing the same but now using only $10\times 10$ order matrices for the LHS, the match now has KS distances to the RHS simultaneous and independent sampling methods, respectively, of 0.01632 and 0.02062. This further justifies choosing $n = 100$ for the above experiments, as $n = 10$ already shows strong connections to the asymptotic picture.

\subsection{Numerical Issues}

 Basak, Paquette and Zeitouni show that a small perturbed Toeplitz matrix has ESD that converges to the law of the symbol in probability \cite{BPZ20}. This is exactly what we encounter using any computations in floating-point arithmetic of any fixed (non-exact) precision order. For example, this is what we see with \Cref{fig:eig1_3,fig:eig1_8} when using the built-in \texttt{eig} function in MATLAB for $(\textbf 1_m,4000)$ for $m = 3,8$ when using double precision, since the floating-point error matrix satisfies the hypotheses of their \cite[Theorem 1]{BPZ20}. With exact arithmetic, the eigenvalues of $T_n = (\textbf 1_m,n)$ are positive and distributed on the interval $[0,2^m]$; this follows since $T_n = (\textbf 1_m,n)$ has positive eigenvalues (it's TP) that further satisfy
 \begin{equation}
     \lambda_{\max}(T_n) \le \sigma_{\max}(T_n)= \|T_n\|_2 \le \|T_n\|_1 = \max_j \sum_i |(T_n)_{ij}| = \sum_{j=0}^m \binom{m}j = 2^m
 \end{equation} 
 (using the fact the $L^1$ induced matrix norm satisfies the max column sum property, while $T_n$ has binomial coefficients as its diagonal entries (cf. \Cref{subsec:1mn})).

 \begin{figure}[t] 
\centering
    \subfloat[$(\textbf 1_3,4000)$]{%
        \includegraphics[width=0.45\textwidth]{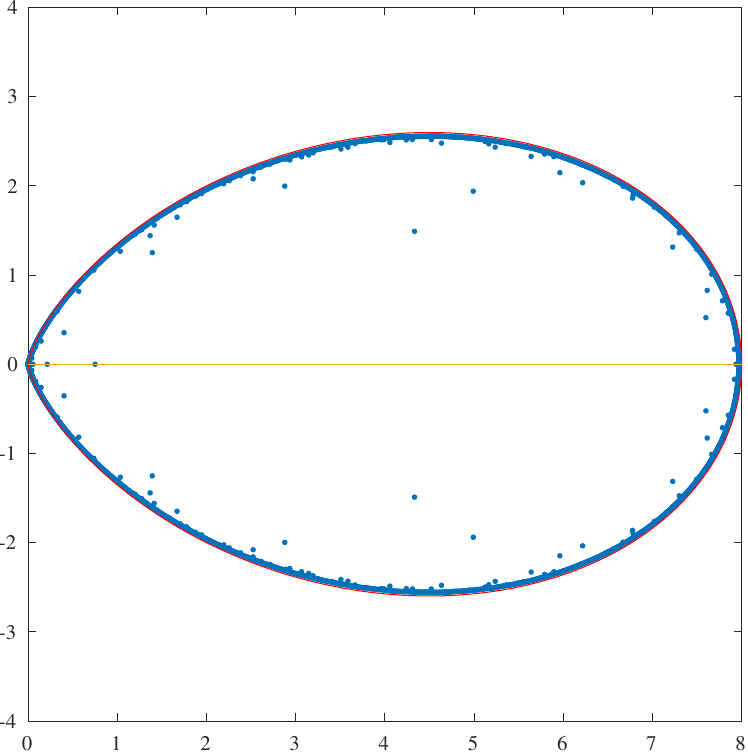}%
        \label{fig:eig1_3}
        }%
    \subfloat[$(\textbf 1_8,4000)$]{%
        \includegraphics[width=0.45\textwidth]{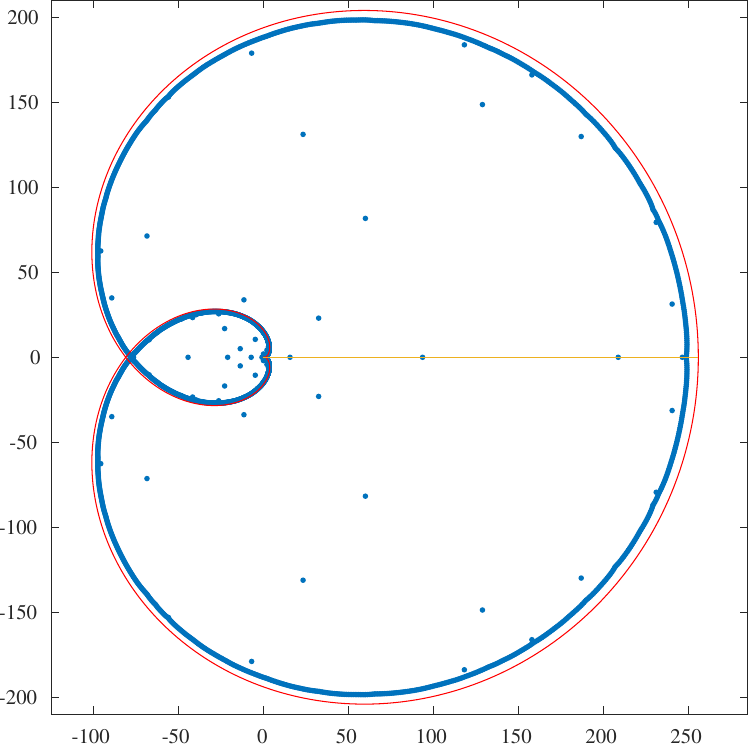}%
        \label{fig:eig1_8}
        }%
\caption{Computed eigenvalues of $(\textbf 1_m,4000)$, mapped against $\varphi(e^{i\theta})$ for $\theta \in [0,2\pi)$ where $\varphi(z) = (1 + z)^m/z$ is the (complex) symbol of the associated Toeplitz operator (in red) along with the interval $[0,2^m]$ (in yellow) for $m = 3,8$. }
\label{fig:computed errors}%
\end{figure}

Even though for our TPHT matrices we know the spectrum is real, the operator is not Hermitian for anything other than the tridiagonal case. Computations of TPHT spectra using default \texttt{eig} functions in MATLAB will similarly result in the accumulation of errors on the law of the symbol of the associated Toeplitz operator for $(\textbf 1_m,n)$ for sufficiently large $n$ when $m \ge 3$.



\section{Conclusions and Further Directions} \label{conclusions}

\subsection{TPUT}
 
    As a point of comparison, there is an elegant characterization of TP, \textit{unipotent} (bi-infinite) Toeplitz operators (TPUT) ultimately due to \cite{AESW51}. The result is

\begin{theorem}[\cite{AESW51}] \label{thm:ET}
    The symbols of all lower unipotent Toeplitz operators that are TP have precisely the form
\begin{eqnarray} \label{shift}
    \varphi(z) = 1 + x_1 z + x_2 z^2 + \cdots = e^{\gamma z} \prod_{j \ge 1} \frac{1 + a_j z}{1 - b_j z}
\end{eqnarray}   
    where $a_j,b_j$ are decreasing non-negative sequences in $\ell^1$ and $\gamma \ge 0$.
\end{theorem}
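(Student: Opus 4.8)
\emph{Plan.} I would prove the two implications separately: the sufficiency (``if $\varphi$ has this form then the Toeplitz operator is TP'') is elementary, while the necessity is the substantial direction, being the Edrei--Thoma classification of one-sided P\'olya frequency sequences.

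\emph{Sufficiency.} Suppose $\varphi$ has the stated form. First I would check that each elementary factor generates a totally positive bi-infinite lower triangular Toeplitz operator: the symbol $1 + a z$ with $a\ge 0$ gives a bidiagonal operator with nonnegative entries; the symbol $1/(1-bz) = \sum_{k\ge 0} b^k z^k$ with $b\ge 0$ gives the operator whose $(i,j)$ entry is $b^{\,i-j}$ for $i\ge j$ and $0$ otherwise, a rank-one-type pattern whose minors are manifestly nonnegative; and $e^{\gamma z} = \sum_{k\ge 0}\gamma^k z^k/k!$ with $\gamma\ge 0$ is the coefficientwise limit of $(1+\gamma z/N)^N$, hence a limit of TP operators. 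Next, an absolutely convergent Cauchy--Binet expansion shows that any fixed finite minor of a product of such bi-infinite lower triangular Toeplitz operators is a sum of products of minors of the factors, hence nonnegative (absolute convergence being guaranteed by the triangular structure and the rapid decay of the entries). Finally, the $\ell^1$ hypothesis on $(a_j)$ and $(b_j)$ forces the truncated partial products $e^{\gamma z}\prod_{j\le N}(1+a_j z)/(1-b_j z)$ to converge coefficientwise to $\varphi$, so every fixed finite minor of the limiting Toeplitz operator is a limit of nonnegative numbers and therefore nonnegative. This uses exactly the facts recalled before \Cref{thm:wp}: products and coefficientwise limits of TP matrices are TP \cite{Ando,Cryer76}.

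\emph{Necessity.} Here I would start by extracting from total positivity the full family of Toeplitz determinant inequalities $\det\bigl(x_{\beta_\ell-\alpha_k}\bigr)_{k,\ell=1}^r \ge 0$ for all $r$ and all strictly increasing index tuples $\alpha,\beta$ (with the convention $x_0=1$ and $x_k=0$ for $k<0$); the cases $r=1$ and $r=2$ already give $x_n\ge 0$ and log-concavity $x_n^2\ge x_{n-1}x_{n+1}$. The heart of the argument is to convert these infinitely many algebraic inequalities into a global analytic statement about $\varphi(z)=\sum_{n\ge 0}x_n z^n$: that $\varphi$ extends to a meromorphic function on $\mathbb{C}$ equal to a ratio $\psi_1(z)/\psi_2(z)$ of two Laguerre--P\'olya-class entire functions, where $\psi_1$ has only zeros in $(-\infty,0]$ and $\psi_2$ only zeros in $[0,\infty)$. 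Following Edrei's route, one uses the determinant inequalities to bound the growth (Nevanlinna characteristic) of $\varphi$, then a normal-families/compactness argument realizes $\varphi$ as a limit of generating functions of genuinely finite products $c\prod(1+a_j z)/\prod(1-b_j z)$ with $a_j,b_j\ge 0$; the Hadamard factorizations of $\psi_1,\psi_2$ then have genus zero for the product parts (forced by nonnegativity of the coefficients and the confinement of zeros and poles), leaving at most a single exponential factor, whence $\varphi(z)=e^{\gamma z}\prod_j (1+a_j z)/(1-b_j z)$ with $\gamma\ge 0$ and with $\sum a_j,\sum b_j<\infty$ coming from the finiteness of the convergence exponent.

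\emph{Main obstacle and scope.} The only real difficulty is this last passage, from the local, algebraic determinant inequalities to the global analytic classification --- the confinement of all zeros to $(-\infty,0]$ and all poles to $[0,\infty)$, the genus/growth control, and the resulting Hadamard product representation. Everything in the sufficiency direction (TP of the building blocks, Cauchy--Binet, the closure argument) is routine. I should note that for the TPHT application in this paper only the degenerate case $\gamma=0$, no $b_j$, and finitely many $a_j$ is needed; for finite Hessenberg--Toeplitz truncations this reduces to the classical Aissen--Schoenberg--Whitney statement and can be seen directly from the bidiagonal (Whitney--Loewner) factorization underlying \Cref{thm:wp}, so the deep part of \Cref{thm:ET} is quoted from \cite{AESW51} and included here only for context.
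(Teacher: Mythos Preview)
The paper does not give its own proof of this theorem; it is quoted verbatim as a result of Aissen--Edrei--Schoenberg--Whitney \cite{AESW51} in the Conclusions section, purely to frame the TPUT discussion. So there is nothing to compare your argument against within the paper itself.

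That said, your outline is a faithful sketch of the classical route: the sufficiency via elementary TP building blocks, Cauchy--Binet, and closure under coefficientwise limits is exactly the standard argument, and your identification of the hard direction with Edrei's analytic classification (growth control, confinement of zeros and poles, Hadamard factorization of genus zero) is accurate. You are also right that the paper only ever invokes the trivial polynomial case $\gamma=0$, no $b_j$, finitely many $a_j$, which follows already from the bidiagonal factorization in \Cref{thm:wp}; the full Edrei--Thoma statement is background, not a working ingredient.
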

   \medskip
   
    There is also a finite size analogue of Theorem \ref{thm:ET} due to Rietsch \cite{Rietsch03}, 
    which states that 
    the class of finite TPUT matrices is
    parameterized by polynomial symbols whose coefficients are quantum elementary symmetric polynomials as opposed to the ordinary symmetric polynomials of the bi-infinite case.
    Quantum elementary symmetric polynomials here are the coefficients
    of the characteristic polynomials of Hessenberg Jacobi matrices.
    
    We note that by Theorem \ref{thm:wp}, the description of an element in $\mathcal{O}_\Lambda$ essentially reduces to a TP unipotent (TPU) element. Theorem \ref{thm:LU-T} gives a precise characterization of the unique element of this class that is Toeplitz, when that element is TP. It will be of interest to study how these representative elements may be related to the TPUT results just mentioned. 

\subsection{Total Positivity of Additional LU dynamic Invariants}
    
A remarkable property of TP matrices is that their Schur complements are also TP \cite{Ando}. This has relevance for the complete integrability of the Full Kostant-Toda Lattice (cf. \cite{EFS93} 
and \cite{E23}). 
As explained in Section \ref{TPH}, this is a generalization of the well-known tridiagonal Toda lattice whose phase space is the entirety of the lower Hessenberg matrices. As is also mentioned in that section there {is a} discrete dynamics on this phase space, consistent with the Toda dynamics and which is equivalent to the dynamics of LU factorization \cite{Symes80}. 
The eigenvalues of a Hessenberg matrix are constants of motion. Additionally, the eigenvalues of certain of its Schur complements (known as Ritz  values but also referred to as {\it chops} in the integrable systems literature \cite{EFS93}) 
are constants of motion in involution with the original eigenvalues.  The results in this paper show that for the dynamics restricted to TPH, all of these eigenvalues are real with eigenvectors having the same oscillation properties as stated in Theorem \ref{thm:osc}. 

\subsection{Lusztig Parameters}

In \cite{ER23} a presentation of the just mentioned
LU dynamics on TPH is presented in terms of the Lusztig parameters that were described in Section \ref{LUfact}. One sees from this construction that a stochastic dynamics on TPH is natural to define by taking the Lusztig parameters to be independent log-normal random variables. (See Appendix B of \cite{ER23}.) The connection between this and the stochastic structure induced from the random symbols for TPHT discussed in the current paper will be taken up in future work.  

\subsection{Comparison to Symmetric Toeplitz Operators}

It is interesting to compare our TPHT ensemble to  Hermitian Toeplitz operators for which the Toeplitz matrix is symmetric. We saw an example of where the two coincide in section \ref{subsec: 1_2}. As in that example the spectrum for Hermitian Toeplitz is always real (as for TPHT generally) but now the asymptotic density of the spectrum may be realized as the push-forward to $\mathbb{R}$ of Lebesgue measure on the circle under the symbol \cite{BG05}. 
It is also the case that there is an analogue of the Full Kostant Toda lattice in which the phase space of Hessenberg matrices is replaced by real symmetric matrices. However in this symmetric case one does not have the same tight relation to Toeplitz matrices that was described in Section  \ref{sec:HT} and that underlies our spectral analysis of general TPH class. Nevertheless, the study of symmetric Toeplitz operators merits further investigation and will also be taken up elsewhere.

\begin{figure}[tp] 
\centering
    \subfloat[Bernoulli]{%
        \includegraphics[width=0.6\textwidth]{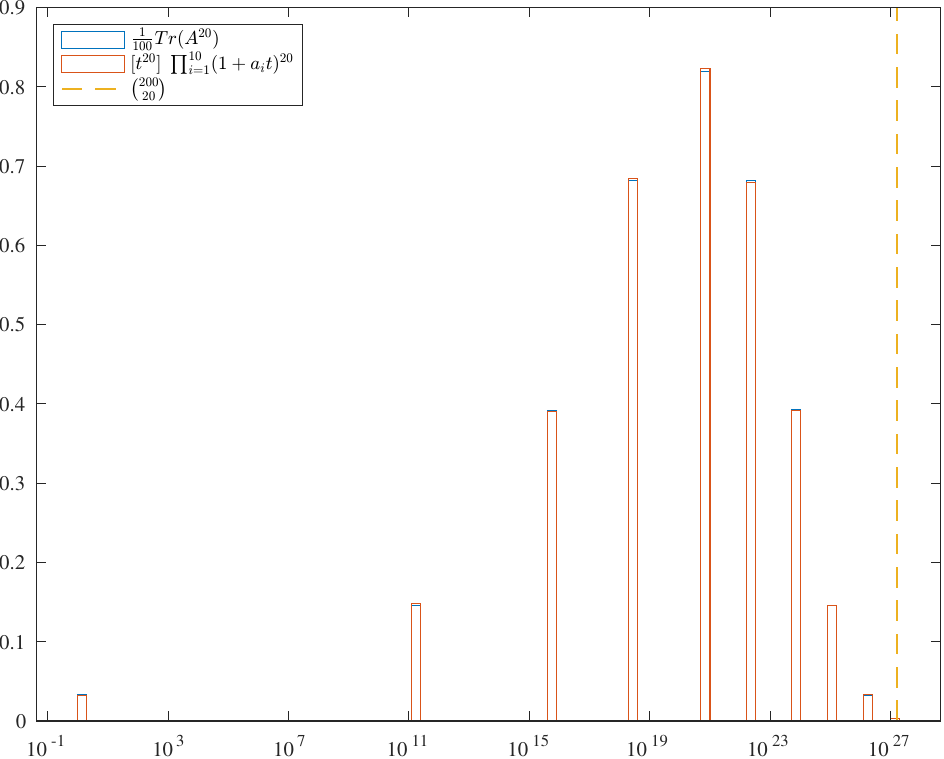}%
        }%
\caption{Computed $20^{th}$ moment for ESD for $A \sim (\textbf a_{10},100)$, where $\textbf a_{10} \in \mathbb R^{10}$ has iid Bernoulli($1/2$) components}
\label{fig:rand moments Bern}%
\end{figure}
 
\subsection{Random Symbols for Discrete Distributions}

Up to this point we have focused on random symbols associated to strictly positive continuous distributions. It is interesting to consider nonnegative discrete cases such as, for instance, $a_i \sim$ Bernoulli(q) iid. 
 If we fix $m$ inputs, then $A \sim (\textbf a_m, n)$, and $A$ is an TPHT matrix with $A = (\textbf 1_X,n)$ where $X \sim \operatorname{Binomial}(m,q)$.   In this model, the $p^{th}$ moment $\mu_p^{(m)}$ can be determined by a calculation similar to that done for $(\textbf 1_m,n)$ in Section \ref{thm:GS}. So the symbol comprises a random variable that satisfies
\begin{equation}
    \mathbb P\left(\mu_p^{(m)} = \binom{pk}{p}\right) = \mathbb P(X = k) = \binom{m}kq^k(1-q)^{m-k}.
\end{equation}
Hence, $\mu_p^{(m)} \sim \binom{pX}p$ for $X \sim \operatorname{Binomial}(m,q)$. Considering this setup, \Cref{fig:rand moments Bern} is a map using 100,000 samples of  $(\textbf a_{10}, 100)$ for $a_i \sim \operatorname{Bernoulli}(\frac12)$ compared against the limiting law from \Cref{thm:GSmom}.  Note the $X = 0$ case is excluded from the logarithmic scale histogram; of $100,000$ samples of ESD $20^{th}$ moments with input $\textbf a_{10}$, 100 resulted in $X = 0$ compared to 979 that resulted in $X = 1$ (the left-most bar in the histogram, since then $\binom{pX}p = \binom{p}p = 1 = 10^0$). The upper bound for this model matches the limiting associated moment for $(\textbf 1_{10},n)$ of $\binom{10p}p = \binom{200}{20}$, as shown by the dotted yellow line.


\appendix

\section{Appendix: Normal Forms} \label{App}

There are several normal forms that play a role  for the matrix ensembles considered in this paper. The first of these is the rational canonical normal form for general lower Hessenberg matrices: every Hessenberg matrix is conjugate to its companion matrix which is also of Hessenberg type. We will show that this conjugation is uniquely achieved by a specific lower unipotent matrix. The second normal form is a bidiagonal Hessenberg matrix which is also related to a given Hessenberg matrix  by a unique lower unipotent matrix. Finally for the more restrictive TPHT class we give, in Appendix \ref{sec:proofthm2}, the proof  of Theorem \ref{thm:LU-T} which brings into play the Hessenberg-Toeplitz normal form.

\subsection{Flag Manifolds: The Principal Embedding and the Companion Embedding}

We recall the companion matrix for matrices with spectrum $\Lambda = (\lambda_1, \ldots, \lambda_n)$ :

\begin{equation}\def\arraystretch{1.7}
c_\Lambda=\left[\begin{array}{ccccc} 
0&1\\&0&\ddots\\ &&\ddots&1\\&&&0&1\\ -c_0&-c_1& \cdots&-c_{n-2}&-c_{n-1}
\end{array}\right],
\end{equation}
where $\prod\limits_{i=1}^n(x-\lambda_i)=x^n+\sum_{i=0}^{n-1} c_ix^i$ is the characteristic polynomial for $\lambda$. \\

We also take $B_+$ to denote the group of invertible upper triangular matrices along with a distinguished 
{\it principal nilpotent} element, 
 \begin{eqnarray*}
\epsilon &=& \left(\begin{array}{ccccc}
0 & 1 & & &\\
& 0 & 1 & &\\
& & \ddots & \ddots &\\
& & & \ddots & 1\\
& & & & 0
\end{array} \right).
\end{eqnarray*}
In terms of this distinguished element we define 

\begin{equation} \label{epslam}
\epsilon_\Lambda = 
\epsilon +  \operatorname{diag}(\Lambda) =
\left[\begin{array}{cccc}
\lambda_1 & 1\\
&\lambda_2 & \ddots\\
&&\ddots & 1\\
&&&\lambda_n
\end{array}\right].
\end{equation}

\noindent In the work of \cite{ER23,OConnell13}, the focus is on $\epsilon_\Lambda$, whereas the version used by \cite{EFS93} and \cite{K78} is $c_\Lambda$. The latter has the advantage of providing a unique representative that is independent of the choice of ordering on $\Lambda$.\\

\noindent We now recall a theorem  essentially due to Kostant \cite{K78}:
\begin{theorem}[\cite{EFS93}]
For each $X\in \mathcal{O}_\Lambda$, there exists a unique lower unipotent $L\in N_-$, such that
\begin{equation}
X=L c_\Lambda L^{-1}.
\end{equation}
The same statement holds (with a different $L\in N_-$) when $c_\Lambda$ is replaced by $\epsilon_\Lambda$ (with a specific ordering of $\Lambda$).
\end{theorem}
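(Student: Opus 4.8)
\emph{Proposed proof.} I would establish the $c_\Lambda$ statement first, treating existence and uniqueness separately, and then bootstrap the $\epsilon_\Lambda$ statement from it; the route is constructive, in contrast to Kostant's original argument. For existence, given $X\in\mathcal{O}_\Lambda$ I would form the $n\times n$ matrix $W=W(X)$ whose $i$-th row is $e_1^T X^{\,i-1}$. Since $X$ is lower Hessenberg with $1$'s on the superdiagonal, a one-line induction shows $e_1^T X^{\,k}$ has a $1$ in coordinate $k+1$ and $0$ in every coordinate beyond it: when a covector supported on coordinates $1,\dots,k+1$ is multiplied by $X$, the only contribution that can reach coordinate $k+2$ comes from the superdiagonal $1$ in row $k+1$ of $X$. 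Hence $W$ is lower unipotent, in particular invertible, and a direct computation gives $WX=c_\Lambda W$: for $i<n$ the $i$-th row of $WX$ is $e_1^T X^{\,i}$, which is at once the $(i{+}1)$-st row of $W$ and the $i$-th row of $c_\Lambda W$, while for $i=n$ the Cayley--Hamilton relation for $p(x)=x^n+\sum_{j=0}^{n-1}c_jx^j$ rewrites $e_1^T X^{\,n}=-\sum_{j=0}^{n-1}c_j\,e_1^T X^{\,j}$, which is exactly the last row $(-c_0,\dots,-c_{n-1})$ of $c_\Lambda$ applied to $W$. Thus $L:=W^{-1}\in N_-$ satisfies $X=Lc_\Lambda L^{-1}$.

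For uniqueness, if $L_1c_\Lambda L_1^{-1}=L_2c_\Lambda L_2^{-1}$ then $M:=L_2^{-1}L_1\in N_-$ commutes with $c_\Lambda$; writing $M=I+N$ with $N$ strictly lower triangular, the last column of $N$ vanishes, so the equations coming from the first $n-1$ rows of $Nc_\Lambda=c_\Lambda N$ collapse to $N_{i+1,\,1}=0$ and $N_{i+1,\,j}=N_{i,\,j-1}$ for $i<n$ and $2\le j\le n$; these, together with strict lower-triangularity, force $N_{p,q}=0$ for all $p>q$ (the first relation kills column $1$, and iterating the second rewrites $N_{p,q}$ as $N_{p-q+1,\,1}$), so $M=I$ and $L_1=L_2$. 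For the $\epsilon_\Lambda$ version, note that $\epsilon_\Lambda=\epsilon+\operatorname{diag}(\Lambda)$ is itself a lower Hessenberg matrix with spectrum $\Lambda$ for any ordering of the $\lambda_i$, so the $c_\Lambda$ result produces a unique $L_0\in N_-$ with $\epsilon_\Lambda=L_0c_\Lambda L_0^{-1}$, whence every $X\in\mathcal{O}_\Lambda$ equals $(L_0W(X))^{-1}\,\epsilon_\Lambda\,(L_0W(X))$; uniqueness is even easier, since $\epsilon_\Lambda$ is upper triangular, so any $M\in N_-$ commuting with it lies in the polynomial algebra $\mathbb{R}[\epsilon_\Lambda]$ (the centralizer of a nonderogatory matrix), hence is upper triangular, hence equals $I$.

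The step I expect to require the most care is the existence computation: confirming that $W$ is genuinely lower unipotent and that the $i=n$ row identity is precisely Cayley--Hamilton depends on keeping the index conventions straight. A lesser point is to check that the ``specific ordering'' clause in the $\epsilon_\Lambda$ statement only concerns the choice of a canonical representative, since the argument above goes through verbatim for every ordering of $\Lambda$.
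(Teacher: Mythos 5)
Your proof is correct and self-contained, and it takes a genuinely different route from what the paper offers. The paper states this theorem with a citation to \cite{EFS93} and \cite{K78} rather than giving a direct proof; the nearest thing to a proof in the paper is Lemma \ref{explicitlowerkostant} in \Cref{App}, which constructs $L$ explicitly via the coefficients of the characteristic polynomials of the leading principal submatrices $X^{(k-1)}$, proved by a block-wise induction on $n$ (and, as stated, only for tridiagonal $X$, though the block computation uses only the Hessenberg structure). Your argument instead builds $W = L^{-1}$ directly as the Krylov-type matrix with rows $e_1^T X^{i-1}$, which is the classical cyclic-vector construction: the lower-unipotent shape of $W$ comes from the unit superdiagonal of $X$, the first $n-1$ rows of $WX = c_\Lambda W$ are tautological shifts, and the last row is exactly Cayley--Hamilton. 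This is shorter and avoids induction on the matrix size. Your uniqueness arguments are also cleaner than anything implicit in the paper: for $c_\Lambda$, the direct computation $N_{p,q} = N_{p-q+1,1} = 0$ from the shift structure of $c_\Lambda$ is tight and correct; for $\epsilon_\Lambda$, the observation that the centralizer of a nonderogatory matrix is $\mathbb{R}[\epsilon_\Lambda]$, which is upper triangular and therefore meets $N_-$ only in $I$, is an elegant shortcut. Each approach has its merits: the paper's construction yields the entries of $L$ itself in closed form (useful for the companion embedding $\kappa_\Lambda$ and the later Lemma \ref{cltoepsl}), whereas your construction gives $L^{-1}$ in closed form and more transparently explains why the conjugator must be unipotent and unique. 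Your closing remark — that the "specific ordering" clause in the $\epsilon_\Lambda$ part is only about fixing a canonical representative, since the argument itself works for every ordering — is also accurate and worth keeping.
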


A key feature of this result is that $L$ is unique and so makes possible the following definition. 

\begin{definition}\label{compembdefchfive}
The \textbf{companion embedding} is the map $\kappa_\Lambda:\mathcal{O}_\Lambda\to  {G}/B_+$ defined as follows: for $X\in \mathcal{O}_\Lambda$, if $X=Lc_\Lambda L^{-1}$, then
\begin{equation}
\kappa_\Lambda(X)=L^{-1}\mod B_+.
\end{equation}
\end{definition}

\begin{remark}\index{Crystal Embedding}\label{crystalembdefchfive}
An analogous embedding (described later in this section) can be performed using $\epsilon_\Lambda$ in place of $c_\Lambda$. We call this the \textbf{principal embedding}.  
\end{remark}

\noindent We now turn our attention to the $L$'s in both embeddings, finding explicit formul{\ae} where possible and offering a means of translation between the two by expressing the relationship between the $L$'s corresponding to $\epsilon_\Lambda$ and to $c_\Lambda$.

\begin{lemma}\label{explicitlowerkostant}
For each $n\in\mathbb{N}$, if $X$ is a tridiagonal $n\times n$ Hessenberg matrix, and for $1<k\leq n$, 
$$\det(xI_{k-1}-X^{(k-1)})=x^{k-1}+\sum_{i=1}^{k-1} l_{ki}x^{i-1}$$
the $n\times n$ lower unipotent matrix $L=(l_{ij})_{i,j}$ defined by the above $n-1$ polynomials is the unique such matrix satisfying
$$L^{-1}XL=c_X$$
where $c_X$ is the companion matrix of $X$ (or $c_\Lambda$, where $\Lambda=\operatorname{Spec}(X)$), and where $X^{(k)}$ denotes the principal $k \times k$ submatrix of $X$.
\end{lemma}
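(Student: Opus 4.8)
The plan is to first exhibit \emph{one} lower unipotent matrix conjugating $X$ to its companion matrix, then invoke the uniqueness already quoted (the theorem of Kostant/\cite{EFS93} above, applied with $\Lambda=\operatorname{Spec}(X)$ so that $c_\Lambda=c_X$), and finally to identify this matrix with the one built from the polynomials $\det(xI_{k-1}-X^{(k-1)})$. Throughout, write the tridiagonal Hessenberg matrix $X$ with diagonal entries $d_1,\dots,d_n$, subdiagonal entries $(X)_{k+1,k}=c_k$, superdiagonal entries $(X)_{k,k+1}=1$, and set $p_m(x)=\det(xI_m-X^{(m)})$, with the conventions $p_0=1$, $p_{-1}=0$.

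For existence, introduce the Krylov-type matrix $M$ whose $k$-th row is $e_1^{T}X^{k-1}$, i.e.\ $M_{ki}=(X^{k-1})_{1i}$. Because $X$ is Hessenberg with unit superdiagonal, a short induction on $k$ shows $e_1^{T}X^{k-1}$ is supported on coordinates $1,\dots,k$ with $k$-th entry $1$; hence $M$ is lower unipotent, and so is $L:=M^{-1}$. One then checks $MX=c_XM$ by comparing rows. For $k<n$, the $k$-th row of $MX$ is $e_1^{T}X^{k}$, which is the $(k+1)$-st row of $M$; and the $k$-th row of $c_XM$ is the same, since the $k$-th row of $c_X$ is $e_{k+1}^{T}$. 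For $k=n$, writing $p_n(x)=x^{n}+\sum_{j=1}^{n}c_{j-1}x^{j-1}$, Cayley--Hamilton gives $e_1^{T}X^{n}=-\sum_{j=1}^{n}c_{j-1}\,e_1^{T}X^{j-1}$, the combination of the rows of $M$ with coefficients $-c_{j-1}$; and this is exactly the $n$-th row of $c_XM$, because the $n$-th row of $c_X$ is $(-c_0,\dots,-c_{n-1})$. Hence $MXM^{-1}=c_X$, i.e.\ $L^{-1}XL=c_X$ with $L$ lower unipotent, and the quoted uniqueness makes $L$ the unique such matrix.

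It remains to identify $L=M^{-1}$ with the matrix $\widehat L$ whose $k$-th row lists the coefficients of $p_{k-1}$, i.e.\ $\widehat L_{ki}=[x^{i-1}]\,p_{k-1}(x)$; this is precisely the $L=(l_{ij})$ of the statement, the convention $p_0=1$ giving its first row $e_1^{T}$. The crux is the identity $p_{k-1}(X^{T})\,e_1=e_k$ for $1\le k\le n$, which I would prove by induction on $k$ by matching the two three-term recurrences
\[
p_m(x)=(x-d_m)p_{m-1}(x)-c_{m-1}p_{m-2}(x),\qquad X^{T}e_m=c_{m-1}e_{m-1}+d_me_m+e_{m+1},
\]
the first being the classical recurrence for characteristic polynomials of leading principal submatrices of a tridiagonal matrix, the second describing how $X^{T}$ advances the standard basis. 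Granting this, $(\widehat L\,M)_{ki}=\sum_j\bigl([x^{j-1}]p_{k-1}\bigr)(X^{j-1})_{1i}=e_1^{T}p_{k-1}(X)\,e_i=\bigl(p_{k-1}(X^{T})e_1\bigr)^{T}e_i=e_k^{T}e_i=\delta_{ki}$, so $\widehat L\,M=I$ and therefore $\widehat L=M^{-1}=L$, which completes the proof.

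The only genuinely delicate step is the bookkeeping in the third paragraph: recognizing that the determinant recurrence for $p_m$ is mirrored term for term by the basis shift $e_1\mapsto e_2\mapsto\cdots$ induced by $X^{T}$, and keeping the boundary cases $k=1$ and $k=n$ (the latter interacting with Cayley--Hamilton) straight. Everything else is routine and is governed entirely by the tridiagonal, unit-superdiagonal structure of $X$; indeed only the Hessenberg/unit-superdiagonal structure is needed for the existence half of the argument.
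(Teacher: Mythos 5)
Your proposal is correct, but it follows a genuinely different route from the paper. The paper proves the lemma by induction on $n$, writing the $(n+1)\times(n+1)$ case in $2\times 2$ block form, applying the inductive hypothesis to the leading principal $n\times n$ block, and then verifying directly that the top-left block of $L^{-1}XL$ must be the shift $\epsilon_n$ (via a ``claim'' matching the last row to the coefficients of $p_n$) and that the right column is $\mathbf{e}_n$, so that $L^{-1}XL$ is a companion matrix. Your argument instead splits existence from identification: you first introduce the Krylov matrix $M$ with rows $e_1^T X^{k-1}$, observe that the unit superdiagonal forces $M$ to be lower unipotent, and verify $MX=c_XM$ row by row (Cayley--Hamilton for the last row); you then identify $M^{-1}$ with the polynomial-coefficient matrix $\widehat L$ via the identity $p_{k-1}(X^T)e_1=e_k$, which you obtain by matching the three-term tridiagonal recurrence for $p_m$ against the basis-shift action of $X^T$. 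Both proofs lean on the same prior uniqueness statement (Kostant's theorem, quoted just before the lemma) rather than reproving it. What your approach buys: it exposes the classical Krylov correspondence ($L^{-1}$ is the controllability/Krylov matrix, $L$ is the matrix of characteristic-polynomial coefficients) and cleanly separates which part uses only the Hessenberg unit-superdiagonal structure (existence and $M$ unipotent) from which part uses tridiagonality (the three-term recurrence); what the paper's approach buys: it is a purely linear-algebraic block induction with no functional calculus or Cayley--Hamilton, and it matches the style of the block-inductive proof of the LU theorem given later in the same appendix. One small caution: you overload the symbol $c$ for both subdiagonal entries $c_{m-1}=X_{m,m-1}$ and characteristic-polynomial coefficients $c_j$; the argument is still unambiguous from context, but in a polished write-up this should be disambiguated.
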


\begin{proof}
We prove this by induction:\\[5pt]
The base case of $n=1$ is trivial: $X=[a_1]$, $L=[1]$ and $c_X=[a_1]$ clearly satisfies $XL=Lc_X$.\\[5pt]
Let us suppose the result holds for some $n\in\mathbb{N}$. To proceed, suppose $X$ is an $(n+1)\times (n+1)$ tridiagonal Hessenberg matrix. We make the key observation that if $L$ is the conjugating matrix for $X$, then $L^{(n)}$ is the conjugating matrix for $X^{(n)}$, which follows immediately from the definition of the $l_{ki}$'s. Thus, the induction hypothesis asserts 
\begin{equation}X^{(n)}L^{(n)}=L^{(n)}c_{X^{(n)}}.\label{conjcompblocks}\end{equation}
We impose a block structure on $X$:
$$\renewcommand{\arraystretch}{1.6}\setlength{\tabcolsep}{16pt}
X=\left[\begin{array}{c|c}
X^{(n)} & \mathbf{e}_{n}\\\hline
b_{n}\mathbf{e}_{n}^T & a_{n+1}
\end{array}\right]$$
where $\mathbf{e}_n$ is the last column of the $n\times n$ identity matrix.\\

\noindent We impose the analogous block structure on $L$:
$$\renewcommand{\arraystretch}{1.6}\setlength{\tabcolsep}{16pt}
L=\left[\begin{array}{c|c}
L^{(n)} &0\\\hline
\mathbf{v}^T & 1
\end{array}\right]$$
where $\mathbf{v}=[l_{n+1,1}~l_{n+1,2}~\cdots~l_{n+1,n}]$.\\

\noindent Let $c=L^{-1}XL$ with block structure $c=\renewcommand{\arraystretch}{1.5}\left[\begin{array}{c|c}
A & \mathbf{p}\\\hline
\mathbf{q}^T  & r
\end{array}\right]$. Since $XL=Lc$, one obtains the following equation from the top-left block:
\begin{equation}X^{(n)}L^{(n)}+\mathbf{e}_{n}\mathbf{v}^T =L^{(n)}A,\label{indhypunicomp}\end{equation}
with $A$ an $n\times n$ matrix.\\

\noindent By the invertibility of $L^{(n)}$, there can be only one $A$ satisfying this equation. \\

\noindent \textbf{Claim.} $A=\epsilon_n$, where $\epsilon_n$ is the $n\times n$ matrix with $1$'s on the superdiagonal and zeroes elsewhere. This $\epsilon_n$ is not to be mistaken with $\epsilon_\Lambda$. If $d_\Lambda=\text{diag}(\lambda_1,\ldots,\lambda_n)$, then $\epsilon_\Lambda=d_\Lambda+\epsilon_n$.\\[2pt]
\textbf{Proof of Claim.} Using the induction hypothesis, and plugging in $A=\epsilon_n$, Equation \ref{indhypunicomp} becomes
\begin{equation}L^{(n)}c_{X^{(n)}}+\mathbf{e}_{n}\mathbf{v}^T =L^{(n)}\epsilon_n,\end{equation}
or, equivalently,
\begin{equation}L^{(n)}(\epsilon_n-c_{X^{(n)}})=\mathbf{e}_{n}\mathbf{v}^T.\end{equation}
Evaluating both sides, one obtains\vspace{0.2cm}
\begin{equation}\renewcommand{\arraystretch}{1.6}\setlength{\tabcolsep}{16pt}
\left[\begin{array}{cccc}
0&0&\cdots &0\\
\vdots& &&\vdots \\
0 & 0&\cdots & 0\\
l_{n+1,1}&l_{n+1,2} & \cdots & l_{n+1,n}
\end{array}\right]
=
\left[\begin{array}{cccc}
0&0&\cdots &0\\
\vdots& &&\vdots \\
0 & 0&\cdots & 0\\
p_0&p_1 & \cdots & p_{n-1}
\end{array}\right]
\end{equation}
where 
$$\det(xI_{n}-X^{(n)})=x^{n}+\sum_{k=0}^{n-1}p_kx^k=x^{n}+\sum_{i=1}^{n} l_{n+1,i}x^{i-1}
=x^{n}+\sum_{i=0}^{n-1} l_{n+1,i+1}x^i.$$
Hence, $p_k=l_{n+1,k+1}$ for $k=0,\ldots,n-1$. Thus, the ansatz of $A=\epsilon_{n}$ was consistent, which proves the claim. $\square$\\

\noindent Returning to \eqref{conjcompblocks}, we turn our attention to the top-right block:
\begin{equation}
\mathbf{e}_n=L^{(n)}\mathbf{p}.
\end{equation}
Since $(L^{(n)})^{-1}$ is lower unipotent, $\mathbf{p}=(L^{(n)})^{-1}\mathbf{e}_n=\mathbf{e}_n$, since the $\mathbf{e}_n$ is also the last column of $(L^{(n)})^{-1}$.\\

\noindent One can conclude therefore that this matrix $c$, given by $L^{-1}XL$, is a companion matrix. Since $c$ is conjugate to $X$, and the characteristic polynomial is invariant under matrix conjugation, one must have that $c$ is indeed the companion matrix for $X$. This completes the induction step, proving the theorem.
\end{proof}

\noindent This gives a means for computing $L^{-1}$ in the principal embedding.

\begin{lemma}\label{cltoepsl}
Let $X\in \mathcal{O}_\Lambda$, and let $L_1$ be defined as in Lemma \ref{explicitlowerkostant}, and let $L_2$ be the lower unipotent matrix such that for $i>j$
\begin{equation}
(L_2)_{ij}=(-1)^{i+j}e_{j-i}(\lambda_1,\ldots,\lambda_{i-1})
\end{equation}
where $e_{j-1}$ is the $(j-i)$-th elementary symmetric polynomial
\begin{equation}
e_{j-i}(\lambda_1,\ldots,\lambda_{i-1})=\sum_{1\leq k_1<k_2<\cdots<k_{j-i}\leq n}\lambda_{k_1}\lambda_{k_2}\cdots\lambda_{k_{j-i}},
\end{equation}
then $L=L_2L_1^{-1}$ satisfies $X=L^{-1}\epsilon_\Lambda L$.
\end{lemma}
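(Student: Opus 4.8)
The plan is to reduce the conjugation identity to an intertwining relation between the two distinguished representatives $c_\Lambda$ and $\epsilon_\Lambda$ of the isospectral class, and then to verify that relation by a direct entrywise computation powered by the Pascal-type recursion for elementary symmetric polynomials. First I would record the one input we are given: by Lemma \ref{explicitlowerkostant} (together with the Kostant-type theorem stated above, which supplies the analogous unique $L_1$ when $X$ is a general element of $\mathcal{O}_\Lambda$ rather than tridiagonal), $L_1^{-1} X L_1 = c_\Lambda$. With $L = L_2 L_1^{-1}$, the desired identity $X = L^{-1}\epsilon_\Lambda L$ is equivalent to $L X L^{-1} = \epsilon_\Lambda$, i.e. to $L_2\,(L_1^{-1} X L_1)\,L_2^{-1} = \epsilon_\Lambda$, and hence, using $L_1^{-1} X L_1 = c_\Lambda$, to
\begin{equation}
L_2\, c_\Lambda = \epsilon_\Lambda\, L_2 .
\end{equation}
So the whole content of the lemma is that the explicit lower unipotent matrix $L_2$ intertwines the companion form with the bidiagonal form. (Along the way I would reconcile the evident typographical slip in the displayed formula for $(L_2)_{ij}$: for $i>j$ the intended entry is $(-1)^{i-j}e_{i-j}(\lambda_1,\dots,\lambda_{i-1})$, which is consistent with the $1$'s along the diagonal under the convention $e_0\equiv1$, and with $(L_2)_{ij}=0$ for $i<j$.)

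Next I would verify $L_2 c_\Lambda = \epsilon_\Lambda L_2$ column by column. Writing $\prod_i(x-\lambda_i)=x^n+\sum_{i=0}^{n-1}c_i x^i$ with $c_i=(-1)^{n-i}e_{n-i}(\Lambda)$, the $j$-th column of $c_\Lambda$ is nonzero only in row $j-1$ (entry $1$) and in row $n$ (entry $-c_{j-1}=(-1)^{n-j}e_{n-j+1}(\Lambda)$), so
\begin{equation}
(L_2 c_\Lambda)_{ij} = (L_2)_{i,j-1} + (L_2)_{i,n}\,(-1)^{n-j}e_{n-j+1}(\Lambda),
\end{equation}
where the first term is present only for $j\ge 2$ and $(L_2)_{i,n}$ vanishes unless $i=n$. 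On the other side, since $\epsilon_\Lambda$ is upper bidiagonal,
\begin{equation}
(\epsilon_\Lambda L_2)_{ij} = \lambda_i\,(L_2)_{ij} + (L_2)_{i+1,j},
\end{equation}
with the convention $(L_2)_{n+1,j}:=0$.

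For a row $i<n$ the last-column term drops out, and after factoring the common sign the equality $(L_2 c_\Lambda)_{ij}=(\epsilon_\Lambda L_2)_{ij}$ becomes exactly
\begin{equation}
e_{i-j+1}(\lambda_1,\dots,\lambda_{i-1}) = e_{i-j+1}(\lambda_1,\dots,\lambda_i) - \lambda_i\, e_{i-j}(\lambda_1,\dots,\lambda_{i-1}),
\end{equation}
which is the standard recursion $e_k(\lambda_1,\dots,\lambda_i)=e_k(\lambda_1,\dots,\lambda_{i-1})+\lambda_i\, e_{k-1}(\lambda_1,\dots,\lambda_{i-1})$ with $k=i-j+1$. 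For the last row $i=n$ the extra term $(L_2)_{n,n}(-1)^{n-j}e_{n-j+1}(\Lambda)$ contributes, and the identity reduces to the same recursion applied with $i=n$, now over the full list $\lambda_1,\dots,\lambda_n$. The degenerate ranges --- $j=1$ (no row-$(j-1)$ entry), $i=j-1$ (where $(L_2)_{ij}=0$ while $(L_2)_{i+1,j}=1$), and indices with $i-j+1<0$ --- all check out directly once one uses $e_k\equiv 0$ for $k<0$. This establishes $L_2 c_\Lambda=\epsilon_\Lambda L_2$, and with the first paragraph the lemma follows.

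I expect the only real obstacle to be bookkeeping rather than anything conceptual: tracking the sign conventions (in particular the precise relation between the bottom-row coefficients of $c_\Lambda$ and the $e_{n-j+1}(\Lambda)$), correctly dispatching the boundary cases above, and handling the out-of-range values of $e_k$ consistently. An alternative would be an induction on $n$ mimicking the block-structure argument in the proof of Lemma \ref{explicitlowerkostant}, peeling off the last row and column of $c_\Lambda$, $\epsilon_\Lambda$ and $L_2$; this works as well, but the direct verification is more transparent and, unlike an eigenvector/Vandermonde argument, imposes no distinctness hypothesis on $\Lambda$.
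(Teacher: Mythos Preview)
Your reduction is the same as the paper's: both first use $L_1^{-1}XL_1=c_\Lambda$ and then reduce everything to the single intertwining relation $L_2\,c_\Lambda=\epsilon_\Lambda\,L_2$ (equivalently $c_\Lambda=L_2^{-1}\epsilon_\Lambda L_2$), after which $L=L_2L_1^{-1}$ does the job. Your entrywise computation of that intertwining via the Pascal recursion $e_k(\lambda_1,\dots,\lambda_i)=e_k(\lambda_1,\dots,\lambda_{i-1})+\lambda_i e_{k-1}(\lambda_1,\dots,\lambda_{i-1})$ is correct, and your handling of the $i=n$ row and the boundary cases checks out.

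Where the paper differs is in how it verifies that intertwining: rather than a direct calculation, it simply applies Lemma~\ref{explicitlowerkostant} a second time, now with $\epsilon_\Lambda$ in the role of the tridiagonal Hessenberg matrix. Since $\epsilon_\Lambda$ is upper bidiagonal, its $(k{-}1)$-st principal submatrix has characteristic polynomial $\prod_{r=1}^{k-1}(x-\lambda_r)$, whose coefficients are exactly the signed elementary symmetric polynomials $(-1)^{k-i}e_{k-i}(\lambda_1,\dots,\lambda_{k-1})$; reading these off produces the entries of $L_2$ and gives $L_2^{-1}\epsilon_\Lambda L_2=c_\Lambda$ for free. So the paper's argument is shorter and reuses the machinery already built, whereas yours is self-contained and makes the algebraic mechanism (the Pascal identity) explicit. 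Both are valid; neither requires distinctness of the $\lambda_i$.
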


\begin{proof}
This is a consequence of Lemma \ref{explicitlowerkostant}. One has $X=L_1c_\Lambda L_1^{-1}$, and I claim that $c_\Lambda=L_2^{-1}\epsilon_\Lambda L_2$. Thus, $X=L_1L_2^{-1}\epsilon_\Lambda L_2L_1^{-1}$, and so $L=(L_1L_2^{-1})^{-1}=L_2L_1^{-1}$.\\

\noindent The claim itself is simply an application of Lemma \ref{explicitlowerkostant} since
\begin{equation}
\tau_k(xI_n-\epsilon_\Lambda)=x^k+\sum_{i=0}^{k-1}(-1)^{k+i}e_{k-i}(\lambda_1,\ldots,\lambda_k).
\end{equation}
\end{proof}

\noindent When $\lambda_i\neq \lambda_j$ for all $i\neq j$, one can of course diagonalise any matrix in $\mathcal{O}_\Lambda$. The following result, which is an explicit form of Lemma 7 in \cite{EFH91}, describes a diagonalisation of $\epsilon_\Lambda$.

\begin{lemma}\label{uppertodiagefhexpl}
If $\lambda_1,\ldots,\lambda_n$ are distinct, then one has $\epsilon_\Lambda=U d_\Lambda U^{-1}$, where $U=(u_{ij})$ is the upper triangular matrix given by
$$u_{ij}=\prod_{k=1}^{i-1}(\lambda_j-\lambda_k),~~~1\leq i\leq j\leq n$$
and $d_\Lambda=\epsilon_\Lambda-\epsilon=\operatorname{diag}(\lambda_1,\ldots,\lambda_n)$.
\end{lemma}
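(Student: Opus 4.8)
The plan is to verify directly that the claimed upper triangular matrix $U$ conjugates $\epsilon_\Lambda$ to $d_\Lambda$, i.e. that $\epsilon_\Lambda U = U d_\Lambda$, rather than trying to invert $U$ explicitly. Writing out the $(i,j)$ entry of both sides of $\epsilon_\Lambda U = U d_\Lambda$ is the natural move: since $\epsilon_\Lambda = \epsilon + \operatorname{diag}(\lambda_1,\ldots,\lambda_n)$, the left-hand side at position $(i,j)$ is $u_{i+1,j} + \lambda_i u_{ij}$ (with the convention $u_{n+1,j}=0$), while the right-hand side is $\lambda_j u_{ij}$. So the whole lemma reduces to the single identity
\begin{equation}
u_{i+1,j} = (\lambda_j - \lambda_i)\, u_{ij}, \qquad 1 \le i \le j \le n,
\end{equation}
together with checking the boundary/degenerate cases ($i=n$, and $i=j$ where both sides vanish for the off-diagonal relation but the diagonal entry $u_{ii}=\prod_{k=1}^{i-1}(\lambda_i-\lambda_k)$ must be consistent — note also one should confirm $u_{ii}\neq 0$, which is exactly where distinctness of the $\lambda$'s enters, guaranteeing $U$ is invertible).

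The identity above is immediate from the product formula for $u_{ij}$: $u_{i+1,j} = \prod_{k=1}^{i}(\lambda_j - \lambda_k) = (\lambda_j-\lambda_i)\prod_{k=1}^{i-1}(\lambda_j-\lambda_k) = (\lambda_j-\lambda_i)u_{ij}$. First I would state the goal as $\epsilon_\Lambda U = Ud_\Lambda$, then expand entrywise exactly as above, then invoke this telescoping of the product. One should also note that $U$ is genuinely upper triangular: for $i>j$ the product $\prod_{k=1}^{i-1}(\lambda_j-\lambda_k)$ includes the factor $k=j$, which is $\lambda_j-\lambda_j=0$, so $u_{ij}=0$, consistent with the stated range $1\le i\le j\le n$; and along the diagonal $u_{ii}=\prod_{k=1}^{i-1}(\lambda_i-\lambda_k)$ is nonzero precisely because the $\lambda$'s are distinct, so $\det U = \prod_{i=1}^n u_{ii} \neq 0$ and $U^{-1}$ exists.

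Honestly there is no serious obstacle here — the lemma is essentially a bookkeeping exercise once the conjugation is rewritten as the commutation-type relation $\epsilon_\Lambda U = U d_\Lambda$ and one reads off the one-line recursion on the columns. The only place requiring a moment's care is the bottom row ($i=n$), where the superdiagonal term of $\epsilon_\Lambda$ is absent; there the relation $\lambda_n u_{nj} = \lambda_j u_{nj}$ must hold, and indeed $u_{nj}=0$ for $j<n$ while for $j=n$ it is $\lambda_n u_{nn}=\lambda_n u_{nn}$, so it is automatic. If one wanted a more conceptual phrasing, the columns $\mathbf{u}_j$ of $U$ are (unnormalized) eigenvectors of $\epsilon_\Lambda$ for eigenvalue $\lambda_j$, and the product formula is just the explicit back-substitution solution of the triangular eigenvector equation $(\epsilon_\Lambda - \lambda_j I)\mathbf{u}_j = 0$; I would mention this as the motivation and then present the direct verification as the proof.
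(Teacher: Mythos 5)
Your proposal is correct and matches the paper's proof essentially verbatim: both verify $\epsilon_\Lambda U = U d_\Lambda$ entrywise, using the telescoping identity $u_{i+1,j}=(\lambda_j-\lambda_i)u_{ij}$ (the paper writes this as $\lambda_i u_{ij}+u_{i+1,j}=\lambda_j u_{ij}$), handle the $i=n$ row separately, and note that $\det U=\prod_{i<k}(\lambda_i-\lambda_k)\neq 0$ exactly when the $\lambda$'s are distinct. The only cosmetic difference is that you phrase the computation as a recursion on entries while the paper phrases it column-by-column.
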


\begin{proof}
The matrix $U$ is clearly invertible if and only if $\lambda_i\neq \lambda_j$ since
$$\det(U)=\prod_{i=1}^n \prod _{k=1}^{i-1}(\lambda_i-\lambda_k)=\prod_{1\leq k<i\leq n}(\lambda_i-\lambda_k).$$
It just remains to show that $\epsilon_\Lambda U = U d_\Lambda$. Let $u_j=(u_{ij})_{1\leq i\leq n}$ be the $j$-th column of $U$, then for $i<n$:
\begin{align*}
(\epsilon_\Lambda u_j)_i&=\sum_{k=1}^n (\epsilon_\Lambda)_{ik}u_{kj}\\
&=\lambda_i u_{ij}+u_{i+1,j}\\
&=\lambda_i\prod_{k=1}^{i-1}(\lambda_j-\lambda_k)+\prod_{k=1}^{i}(\lambda_j-\lambda_k)\\
&=(\lambda_i+\lambda_j-\lambda_i)\prod_{k=1}^{i-1}(\lambda_j-\lambda_k)\\
&=\lambda_j \prod_{k=1}^{i-1}(\lambda_j-\lambda_k)\\
&=\lambda_j (u_j)_i.
\end{align*}
For $i=n$, we simply have
\begin{align*}
(\epsilon_\Lambda u_j)_n&=\lambda_n (u_j)_n\\
&=\left\{ \begin{array}{cl} 0 & j<n\\ \lambda_j (u_j)_n & j=n\end{array}\right.\\
&=\lambda_j (u_j)_n.
\end{align*}~\\[-12pt]
Thus, $\epsilon_\Lambda u_j=\lambda_j u_j$ for each $j$. 
\end{proof}

A final feature of these embeddings is that they provide a means of representing the eigenfunctions for Hessenberg matrices $X$ when the eigenvalues of $X$ are distinct.
\begin{corollary} \label{cor:efcn}
\begin{eqnarray*}
    X  L^{-1} U = L^{-1} U d_\Lambda
\end{eqnarray*} 
where $L= L_2 L_1^{-1}$ from Lemma \ref{cltoepsl}. In other words $L^{-1} U$ is the matrix of eigenfunctions for $X$ presented in LU-factorized form. 
\end{corollary}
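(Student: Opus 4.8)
The plan is to chain together the conjugation relations established in Lemmas~\ref{cltoepsl} and~\ref{uppertodiagefhexpl}. From Lemma~\ref{cltoepsl} we have $X = L^{-1}\epsilon_\Lambda L$ with $L = L_2 L_1^{-1}$, and from Lemma~\ref{uppertodiagefhexpl} we have $\epsilon_\Lambda = U d_\Lambda U^{-1}$ (valid since the $\lambda_i$ are assumed distinct). Substituting the second into the first gives $X = L^{-1} U d_\Lambda U^{-1} L$, which rearranges immediately to $X (L^{-1}U) = (L^{-1}U) d_\Lambda$. This is exactly the asserted identity, so the first step is essentially a one-line composition of the two prior results.

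The remaining point is the interpretation: since $d_\Lambda = \operatorname{diag}(\lambda_1,\ldots,\lambda_n)$, the equation $X(L^{-1}U) = (L^{-1}U)d_\Lambda$ says precisely that the $j$-th column of $L^{-1}U$ is an eigenvector of $X$ for the eigenvalue $\lambda_j$. One should note that $L^{-1}U$ is indeed the product of a lower unipotent matrix $L^{-1} \in N_-$ with an upper triangular matrix $U$ (upper triangular and invertible by the determinant computation in Lemma~\ref{uppertodiagefhexpl}), so this exhibits the full eigenvector matrix of $X$ in genuine LU-factorized form, which is the content of the final sentence of the corollary.

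There is really no substantive obstacle here: the corollary is a formal consequence of the two lemmas it cites, and the only things to verify are that the hypotheses line up (distinct eigenvalues, so that $U$ is invertible and $d_\Lambda$ is a bona fide diagonalization; $X \in \mathcal{O}_\Lambda$, so that Lemma~\ref{cltoepsl} applies) and that the algebra is transcribed correctly. If anything merits a remark, it is simply that the factorization $L^{-1}U$ need not be the \emph{same} LU factorization produced by Gaussian elimination on the eigenvector matrix, but it is \emph{an} LU factorization, and its factors carry the explicit combinatorial content of $L_1, L_2$ and $U$ from the preceding lemmas — which is the reason this presentation is useful for the oscillation analysis referenced back in Section~\ref{TPH}.
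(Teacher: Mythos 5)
Your argument is exactly the one the paper intends: the corollary's one-line proof ("an immediate consequence of the previous two lemmas") is precisely the chaining of $X = L^{-1}\epsilon_\Lambda L$ from Lemma~\ref{cltoepsl} with $\epsilon_\Lambda = U d_\Lambda U^{-1}$ from Lemma~\ref{uppertodiagefhexpl}, followed by the rearrangement you give. Your additional remarks on $L^{-1}$ being lower unipotent, $U$ upper triangular, and the hypotheses (distinct eigenvalues, $X\in\mathcal{O}_\Lambda$) being needed are correct and simply make explicit what the paper leaves tacit.
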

The  proof is an immediate consequence of the previous two lemmas.
\medskip

\section{Proof of Theorem \ref{thm:LU-T}}\label{sec:proofthm2}

We recall the statement of the theorem and give its detailed proof.\\
\medskip

\noindent\textbf{Theorem \ref{thm:LU-T}.}
\textit{Let $T$ be an $n\times n$ TPHT matrix {in $\mathcal{H}^{\geq 0}$, defined in Theorem \ref{thm:wp}.}  Then $T$ has the LU decomposition $T=LU$ where}
\begin{align*}
    (L)_{ij} &= 
    \left\{
    \begin{array}{cc}
0 & i<j\\
\frac{\tau_{\{i\}\cup[j-1]}^\text{init}(T)}{\tau_{[j]}^\text{init}(T) } & i\geq j
    \end{array}
    \right.\\
    (U)_{ij} &= 
    \left\{
    \begin{array}{cc}
\frac{\tau_{[i]}^\text{init}(T)}{\tau_{[i-1]}^\text{init}(T)} & i=j\\
1 & j=i+1\\
0 & \text{\textit{otherwise.}}
    \end{array}
    \right.
\end{align*}

\begin{proof}
We prove this by induction on $n$. To aid in the proof, for $n\in \mathbb{N}$, denote by $T_n$ the matrix given by
$$(T_n)_{ij} 
= 
\left\{
\begin{array}{cc}
a_{i-j+1} & i\geq j \\
1 & j=i+1\\
0 & j>i+1
\end{array}
\right.
= 
\left[
\begin{array}{ccccc}
a_1 & 1 & 0 & \cdots & 0\\
a_2 & a_1 & 1 & \ddots & \vdots\\
a_3 & a_2 & \ddots & \ddots & 0 \\
\vdots & \ddots & \ddots & \ddots & 1\\
a_n & \cdots & a_3 & a_2 & a_1
\end{array}
\right].$$
If $n=1$, the theorem  states that
$$T_1 = [\,a_1\,] = [\, 1\, ] \, [\, \tau_{[1]}^\text{init}(T)\,]$$
which holds trivially. So, assume $n>1$.
Now, observe that $T_n$ sits inside $T_{n+1}$ as its principal sub-matrix (top-left):
$$T_{n+1} = \left[\begin{array}{c|c}
T_n & \mathbf{e}_n\\
\hline
[a_{n+1}~\cdots ~ a_3 ~ a_2] & a_1
\end{array}\right].$$
We assume that $T_n = L_n U_n$ where 
\begin{align*}
    (L_n)_{ij} &= 
    \left\{
    \begin{array}{cc}
0 & i<j\\
\frac{\tau_{\{i\}\cup[j-1]}^\text{init}(T_n)}{\tau_{[j]}^\text{init}(T_n) } & i\geq j
    \end{array}
    \right.
    ~~~=~~~~~ 
    \left\{
    \begin{array}{cc}
0 & i<j\\
\frac{\tau_{\{i\}\cup[j-1]}^\text{init}(T_{n+1})}{\tau_{[j]}^\text{init}(T_{n+1}) } & i\geq j
    \end{array}
    \right.\\
    (U_n)_{ij} &= 
    \left\{
    \begin{array}{cc}
\frac{\tau_{[i]}^\text{init}(T_n)}{\tau_{[i-1]}^\text{init}(T_n)} & i=j\\
1 & j=i+1\\
0 & \text{otherwise}
    \end{array}
    \right.
    ~~~=~~~~~ 
    \left\{
    \begin{array}{cc}
\frac{\tau_{[i]}^\text{init}(T_{n+1})}{\tau_{[i-1]}^\text{init}(T_{n+1})} & i=j\\
1 & j=i+1\\
0 & \text{otherwise}
    \end{array}
    \right..
\end{align*}
Note the second equality in each line is due to $T_n$ sitting inside $T_{n+1}$ as its principal sub-matrix. Because of this, the rest of this proof shall write $\tau^\text{init}_S$ for $\tau^\text{init}_S(T_{n+1})$.\\
Now consider the LU decommposition of $T_{n+1}$ in $2\times 2$ block form with the principal $n\times n$ sub-matrix as a block:
$$T_{n+1} = L_{n+1}U_{n+1} = 
\left[\begin{array}{c|c}
L_{n+1}^{(n)} & \mathbf{0}_n\\
\hline
\mathbf{p}^T & 1
\end{array}\right]
\left[\begin{array}{c|c}
U_{n+1}^{(n)} & \mathbf{q}\\
\hline
\mathbf{0}_n^T & r
\end{array}\right]
=
\left[\begin{array}{c|c}
L_{n+1}^{(n)}U_{n+1}^{(n)} & L_{n+1}^{(n)}\mathbf{q}\\
\hline
\mathbf{p}^T U_{n+1}^{(n)} & \mathbf{p}^T\mathbf{q}+ r
\end{array}\right]
$$
where $A^{(n)}$ denotes the principal $n\times n$ sub-matrix.\\
By uniqueness of the LU decomposition, this implies that $L_{n+1}^{(n)}=L_n$ and $U_n = U_{n+1}^{(n)}$, which gives a nesting of LU decompositions for the sequence of matrices $(T_n)_{n\in \mathbb{N}}$.\\
To see how the LU decomposition for $T_{n+1}$ relates to that of $T_n$, we need to solve the following for $\mathbf{p}$, $\mathbf{q}$ and $r$:
$$L_n\mathbf{q}=\mathbf{e}_n,~~~~~
\mathbf{p}^T U_n=[a_{n+1}~\cdots ~ a_3 ~ a_2],~~~~~
\mathbf{p}^T\mathbf{q}+ r = a_1.
$$
We immediately have $\mathbf{q}=L_n^{-1}\mathbf{e}_n=\mathbf{e}_n$ since $L_n$ and its inverse is lower unipotent.\\
We claim $\mathbf{p}^T = [p_1 ~ p_2 ~ \cdots ~ p_n]$ with $p_j = \frac{\tau^\text{init}_{\{n+1\}\cup [j-1]}}{\tau^\text{init}_{[j]}}$ satisfies $\mathbf{p}^T U_n=[a_{n+1}~\cdots ~ a_3 ~ a_2]$. Let us multiply this out formally
\begin{align*}
    (\mathbf{p}_n^T U_n)_{1j}
    &= \sum_{k=1}^n (\mathbf{p}_n^T)_{1k}(U_n)_{kj}\\
    &=\left\{
    \begin{array}{cc}
    \left(\frac{\tau^\text{init}_{\{n+1\}}}{\tau^\text{init}_{[1]}}\right)\tau^\text{init}_{[1]}
    & j=1\\
    \frac{\tau^\text{init}_{\{n+1\}\cup [j-2]}}{\tau^\text{init}_{[j-1]}}\cdot 1 + \frac{\tau^\text{init}_{\{n+1\}\cup [j-1]}}{\tau^\text{init}_{[j]}}\cdot \frac{\tau^\text{init}_{[j]}}{\tau^\text{init}_{[j-1]}} 
    & 1<j\leq n
    \end{array}
    \right. \\
    &=\left\{
    \begin{array}{cc}
    \tau^\text{init}_{\{n+1\}}
    & j=1\\
    \frac{\tau^\text{init}_{\{n+1\}\cup [j-2]}+\tau^\text{init}_{\{n+1\}\cup [j-1]}}{\tau^\text{init}_{[j-1]}} 
    & 1<j\leq n
    \end{array}
    \right..
\end{align*}
The first part satisfies $\tau^\text{init}_{\{n+1\}\cup [j-2]}=a_{n+1}$ simply because this is the minor determinant of the $1\times 1$ sub-matrix of $T_{n+1}$ in the last row of the first column.\\
It now remains to show that
$$\frac{\tau^\text{init}_{\{n+1\}\cup [j-2]}+\tau^\text{init}_{\{n+1\}\cup [j-1]}}{\tau^\text{init}_{[j-1]}} = a_{n+2-j},~~~~~\text{for}~~1<j\leq n.$$
But note that this is equivalent to 
$$\tau^\text{init}_{\{n+1\}\cup [j-1]}
= a_{n+2-j}\cdot \tau^\text{init}_{[j-1]} -
\tau^\text{init}_{\{n+1\}\cup [j-2]}.$$
This is true because the left-hand side is given by
\begin{align*}
    \tau^\text{init}_{\{n+1\}\cup [j-1]}
    &=\det((T_{n+1})_{\{n+1\}\cup [j-1],[j]})\\
    &=\det\left[
    \begin{array}{ccccc}
    a_1 & 1 & 0 & \cdots & 0\\
    a_2 & a_1 & \ddots & \ddots & \vdots\\
    \vdots & \ddots & \ddots & \ddots & \vdots\\
    a_{j-1} & \cdots & a_2 & a_1 & 1\\
    \hline
    a_{n+1} & a_n & a_{n-1} & \cdots & a_{n+2-j}
    \end{array}
    \right]
\end{align*}
so the desired equality is seen as the cofactor expansion down the last column in the above matrix.\\
Finally, we need to show that $\mathbf{p}^T \mathbf{q}+r = a_1$ implies $r=\frac{\tau^\text{init}_{[n+1]}}{\tau^\text{init}_{[n]}}$. Since we know $\mathbf{p}$ and $\mathbf{q}$, we can compute this directly:
\begin{align*}
    r
    &=a_1-\mathbf{p}^T \mathbf{q}\\
    &= a_1-\mathbf{p}^T \mathbf{e}_n\\
    &=a_1-p_n\\
    &=a_1-\frac{\tau^\text{init}_{\{n+1\}\cup[n-1]}}{\tau^\text{init}_{[n]}}\\
    &=\frac{a_1\cdot \tau^\text{init}_{[n]}-\tau^\text{init}_{\{n+1\}\cup[n-1]}}{\tau^\text{init}_{[n]}}\\
    &= \frac{\tau^\text{init}_{[n+1]}}{\tau^\text{init}_{[n]}}
\end{align*}
where the last equation follows by considering the cofactor expansion down the last column of $T_{n+1}$.
\end{proof}

\bibliographystyle{siamplain}
\bibliography{references}

\end{document}


\maketitle

\section{A detailed example}

Here we include some equations and theorem-like environments to show
how these are labeled in a supplement and can be referenced from the
main text.
Consider the following equation:
\begin{equation}
  \label{eq:suppa}
  a^2 + b^2 = c^2.
\end{equation}
You can also reference equations such as \cref{eq:matrices,eq:bb} 
from the main article in this supplement.

\lipsum[100-101]

\begin{theorem}
  An example theorem.
\end{theorem}

\lipsum[102]
 
\begin{lemma}
  An example lemma.
\end{lemma}

\lipsum[103-105]

Here is an example citation: \cite{KoMa14}.

\section[Proof of Thm]{Proof of \cref{thm:bigthm}}
\label{sec:proof}
\lipsum[106-112]

\section{Additional experimental results}
\Cref{tab:foo} shows additional
supporting evidence. 

\begin{table}[htbp]
{\footnotesize
  \caption{Example table}  \label{tab:foo}
\begin{center}
  \begin{tabular}{|c|c|c|} \hline
   Species & \bf Mean & \bf Std.~Dev. \\ \hline
    1 & 3.4 & 1.2 \\
    2 & 5.4 & 0.6 \\ \hline
  \end{tabular}
\end{center}
}
\end{table}

\bibliographystyle{siamplain}
\bibliography{references}